\documentclass[preprint,12pt]{elsarticle}
\usepackage{epsfig}
\DeclareGraphicsRule{*}{mps}{*}{}
\usepackage{amssymb}
\usepackage{amsmath}
\usepackage{dsfont}
\usepackage{pb-diagram}
\usepackage{youngtab}
\usepackage{amsthm}
\newtheorem{thm}{Theorem}[section]
\newtheorem{lem}[thm]{Lemma}

\newtheorem{cor}[thm]{Corollary}
\newdefinition{defn}[thm]{Definition}
\newdefinition{exmp}[thm]{Example}

\newdefinition{rem}[thm]{Remark}
%\journal{Advances in Mathematics}
\numberwithin{equation}{thm}
\allowdisplaybreaks

\begin{document}

\begin{frontmatter}
\title{Quantized mixed tensor space and Schur--Weyl duality}

\author[s]{R.~Dipper }
\ead{rdipper@mathematik.uni-stuttgart.de}

\author[c]{S.~Doty }
\ead{doty@math.luc.edu}

\author[s]{F.~Stoll }
\ead{stoll@mathematik.uni-stuttgart.de}

\address[s]{ Institut f\"ur Algebra und Zahlentheorie,
  Universit\"at Stuttgart,
  Pfaffenwaldring 57, 70569 Stuttgart,
  Germany}
\address[c]{  Department of Mathematics and Statistics,
  Loyola University Chicago,
  1032 W.~Sheridan Road, 
  Chicago,IL 60660 USA}

\begin{abstract}
  Let $R$ be a commutative ring with one and $q$ an invertible element
  of $R$. The (specialized) quantum group ${\mathbf
    U}=U_q(\mathfrak{gl}_n)$ over $R$ of the general linear group acts
  on mixed tensor space $V^{\otimes r}\otimes {V^*}^{\otimes s}$ where
  $V$ denotes the natural $\mathbf U$-module $R^n$, $r,s$ are
  nonnegative integers and $V^*$ is the dual $\mathbf U$-module to
  $V$.  The image of $\mathbf U$ in $\mathrm{End}_R(V^{\otimes
    r}\otimes {V^*}^{\otimes s})$ is called the rational $q$-Schur
  algebra $S_{q}(n;r,s)$. We construct a bideterminant basis of
  $S_{q}(n;r,s)$. There is an action of a $q$-deformation
  $\mathfrak{B}_{r,s}^n(q)$ of the walled Brauer algebra on mixed
  tensor space centralizing the action of $\mathbf U$. We show that
  $\mathrm{End}_{\mathfrak{B}_{r,s}^n(q)}(V^{\otimes r}\otimes
  {V^*}^{\otimes s})=S_{q}(n;r,s)$. By \cite{dipperdotystoll} the
  image of $\mathfrak{B}_{r,s}^n(q)$ in $\mathrm{End}_R(V^{\otimes
    r}\otimes {V^*}^{\otimes s})$ is $\mathrm{End}_{\mathbf
    U}(V^{\otimes r}\otimes {V^*}^{\otimes s})$. Thus mixed tensor
  space as $\mathbf U$-$\mathfrak{B}_{r,s}^n(q)$-bimodule satisfies
  Schur--Weyl duality.
\end{abstract}

\begin{keyword}
Schur--Weyl duality\sep walled Brauer algebra \sep mixed tensor
space\sep rational $q$-Schur algebra
\MSC 33D80 \sep 16D20 \sep 16S30 \sep 17B37 \sep 20C08 
\end{keyword}

\end{frontmatter}
\input epsf

\section*{Introduction}
Schur--Weyl duality plays an important role in representation theory
since it relates the representations of the general linear group with
the representations of the symmetric group. The classical Schur--Weyl
duality due to Schur (\cite{schur}) states that the actions of the
general linear group $G=\mathrm{GL}_n(\mathbb{C})$ and the symmetric
group $\mathfrak{S}_m$ on tensor space $V^{\otimes m}$ with
$V=\mathbb{C}^n$ and satisfy the bicentralizer property, that is
$\mathrm{End}_{\mathfrak{S}_m}(V^{\otimes m})$ is generated by the
action of $G$ and correspondingly, $\mathrm{End}_{G}(V^{\otimes m})$
is generated by the action of $\mathfrak{S}_m$.  This duality has been
generalized to subgroups of $G$ (e.~g.~orthogonal, symplectic groups,
Levi subgroups) and corresponding algebras related with the group
algebra of the symmetric group (e.~g.~Brauer algebras, Ariki-Koike
algebras), as well as deformations of these algebras. In general, the
phrase 'Schur--Weyl duality' has come to indicate such a bicentralizer
property for two algebras acting on some module.

One such generalization is the mixed tensor space $V^{\otimes
  r}\otimes {V^*}^{\otimes s}$ where $V$ is the natural and $V^*$ its
dual $\mathbb{C}G$-module.  The centralizer algebra is known to be the
walled Brauer algebra $\mathfrak{B}_{r,s}^n$ and it was shown in
\cite{bchlls} by Benkart, Chakrabarti, Halverson, Leduc, Lee and
Stroomer, that mixed tensor space under the action of $\mathbb{C}G$
and $\mathfrak{B}_{r,s}^n$ satisfies Schur--Weyl duality; see also
\cite{Koike} and \cite{Turaev}.  Kosuda and Murakami
\cite{kosudamurakami} introduced a one parameter deformation
$\mathfrak{B}_{r,s}^n(q)$ of the walled Brauer algebra and proved
Schur--Weyl duality in the generic case (i.~e.~over $\mathbb{C}(q)$),
where $\mathbb{C}G$ is replaced by the generic quantum group
$U_{\mathbb{C}(q)}(\mathfrak{gl}_n)$.

In this paper, we generalize the results of
\cite{bchlls,kosudamurakami} for a very general setting.  Let $R$ be a
commutative ring with $1$ and $q\in R$ be invertible.  Let $\mathbf U$
be (a specialized version of) the quantum group over $R$, which replaces
the general linear group in the quantized case.  Let
$\mathfrak{B}_{r,s}^n(q)$ be the $q$-deformation of the walled Brauer
algebra defined by Leduc \cite{leduc}.  Here we use a specialized
version of the multi-parameter version of \cite{leduc} which acts on
mixed tensor space $V^{\otimes r}\otimes {V^*}^{\otimes s}$ where
$V=R^n$ is the natural $\mathbf U$-module.

 In \cite{dipperdotystoll} one side of Schur--Weyl duality was shown in
 this situation, namely that the image of $\mathfrak{B}_{r,s}^n(q)$ in 
$\mathrm{End}_R(V^{\otimes r}\otimes {V^*}^{\otimes s})$ is the
centralizing algebra of the action of $\mathbf U$ on mixed tensor
space. 

In this paper, which is a revised version of a preprint which has
circulated since 2008, the other side of Schur--Weyl duality will be
proven, namely that the image of $\mathbf U$ in
$\mathrm{End}_R(V^{\otimes r}\otimes {V^*}^{\otimes s})$ is the
endomorphism algebra of mixed tensor space under the action of
$\mathfrak{B}_{r,s}^n(q)$. We call this image the \emph{rational
  $q$-Schur algebra} and denote it by $S_q(n;r,s)$. It is a
$q$-analogue of the rational Schur algebra introduced and studied in
\cite{dipperdoty}. In case $q=1$ we obtain a similar statement (which
is also new) for the rational Schur algebra with respect to the
hyperalgebra over $R$ of $\mathfrak{gl}_n$. In the meantime, Tange
\cite{tange} showed this result in the special case $q=1$ by different
methods. One may also wish to consult the recent paper
\cite{BrundanStroppel} by Brundan and Stroppel, which enlarges the
landscape on walled Brauer algebras considerably.

For technical reasons it will be useful to turn things around and
instead define $S_q(n;r,s)$ to be
$\mathrm{End}_{\mathfrak{B}_{r,s}^n(q)}(V^{\otimes r}\otimes
{V^*}^{\otimes s})$. Since we show at the end that this coincides with
the image of $\mathbf U$ in $\mathrm{End}_R(V^{\otimes r}\otimes
{V^*}^{\otimes s})$ there is no harm in this abuse of notation. In the
course of our proof we will show that
$\mathrm{End}_{\mathfrak{B}_{r,s}^n(q)}(V^{\otimes r}\otimes
{V^*}^{\otimes s})=S_q(n;r,s)$ is free as $R$-module of rank
independent of the choice of $R$ and $q$. We shall accomplish this by
constructing an $R$-basis of $S_q(n;r,s)$ which is dual to a certain
bideterminant basis of the dual coalgebra $A_q(n;r,s)$ of
$S_q(n;r,s)$.

As a guide for the reader we briefly outline the main ideas behind the
proof. There is a natural embedding of mixed tensor space $V^{\otimes
  r}\otimes {V^*}^{\otimes s}$ into ordinary tensor space $V^{\otimes
  r+(n-1)s}$. This embedding $\kappa$ is not $\mathbf{U}$-linear, but is
$\mathbf U'$-linear, where $\mathbf U'$ is the subalgebra of $\mathbf
U$ corresponding to the special linear Lie algebra. We will see that
replacing $\mathbf U$ by $\mathbf U'$ is not significant. For
$u\in\mathbf U'$ the restriction of the action of $u$ on $V^{\otimes
  r+(n-1)s}$ to $V^{\otimes r}\otimes {V^*}^{\otimes s} \leq
V^{\otimes r+(n-1)s}$ commutes with the action of
$\mathfrak{B}_{r,s}^n(q)$ on $V^{\otimes r}\otimes {V^*}^{\otimes s} $,
hence lies in $S_q(n;r,s)$.  Thus $\kappa$ induces an algebra
homomorphism $\pi$ from the ordinary $q$-Schur algebra $S_q(n,
r+(n-1)s)$ which is the image of $\mathbf U'$ in
$\mathrm{End}_R(V^{\otimes r+(n-1)s}$ into $S_q(n;r,s)$.  This
homomorphism was motivated by a similar homomorphism in
\cite{dipperdoty}.

Let $\rho_{\mathrm{ord}}:\mathbf U'\to S_q(n,r+(n-1)s)$ 
be the representation  of $\mathbf U'$ on $ V^{\otimes
  r+(n-1)s}$ and  
$\rho_{\mathrm{mxd}}: \mathbf U'\to S_q(n;r,s)$  the
representation of $\mathbf U'$ on mixed tensor space. 
Then $\rho_{\mathrm{mxd}}=\pi\circ \rho_{\mathrm{ord}}$ by
construction. By classical quantized Schur--Weyl duality
$\rho_{\mathrm{ord}}$ is surjective, hence  $\rho_{\mathrm{mxd}}$ is
surjective (i.e.~$\rho_{\mathrm{mxd}}(\mathbf U')= S_q(n;r,s)$)
if $\pi$ is surjective. We show that $\pi$ possesses an $R$-linear
right inverse, thus proving the surjectivity of $\pi$.

At this point, we switch over to  coefficient spaces: It is well
known that the dual coalgebra $A_q(n,r+(n-1)s)=S_q(n,r+(n-1)s)^*$ is
the coefficient space of $ \mathbf U'$ acting on ordinary tensor space
$ V^{\otimes  r+(n-1)s}$. There is no problem here with dualisation,
since the classical $q$-Schur algebra $ S_q(n,r+(n-1)s)$ is known to
be free as $R$-module of fixed rank independent of the choice of $R$
and $q$. Moreover $A_q(n,r+(n-1)s)$ possesses a bideterminant basis
(\cite{huangzhang}).  The endomorphism algebra $S_q(n;r,s)=
\mathrm{End}_{\mathfrak{B}_{r,s}^n(q)}(V^{\otimes  r}\otimes
{V^*}^{\otimes s})$  may be described by a system of linear equations
in the endomorphism algebra 
$\mathrm{End}_R(V^{\otimes  r}\otimes{V^*}^{\otimes s})$, which is 
free as $R$-module. Using these equations, we apply a general argument
(Lemma~\ref{lem:construction}) to construct a factor coalgebra
$A_q(n;r,s)$ of the $R$-coalgebra $\mathrm{End}_R(V^{\otimes
  r}\otimes{V^*}^{\otimes s})$  such that $A_q(n;r,s)^*$ is isomorphic
to the $R$-algebra $S_q(n;r,s)$. In Section~\ref{sec:iota} we exhibit
a map $\iota:A_q(n;r,s)\to A_q(n,r+(n-1)s)$ and show explicitly that
$\iota^*=\pi: S_q(n, r+(n-1)s)\to S_q(n;r,s)$. In Section~\ref{section:basis}
we show that $A_q(n;r,s)$ and hence  $S_q(n;r,s)$ is free as
$R$-module by constructing a (rational) bideterminant basis. From this
it is not hard to find an ($R$-linear) left inverse of the map $\iota$
whose dual map is then the required right inverse of $\iota^*=\pi$,
proving that  $S_q(n;r,s)$ is the image of $\mathbf U'$ (and hence
$\mathbf U$) acting on mixed tensor space.

\section{Preliminaries}\label{section:hopf}
Let $n$ be a given positive integer.  In this section, we introduce
the quantized enveloping algebra of the general linear Lie algebra
$\mathfrak{gl}_n$ over a commutative ring $R$ with parameter $q$ and
summarize some well known results; see for example
\cite{hongkang,jantzen,lusztig}.  We will start by recalling the
definition of the quantized enveloping algebra over $\mathbb{Q}(q)$
where $q$ is an indeterminate.

Let $P^\vee$ be the free $\mathbb{Z}$-module with basis $h_1, \ldots,
h_n$ and let $\varepsilon_1, \ldots, \varepsilon_n \in {P^\vee}^*$ be
the corresponding dual basis: $\varepsilon_i$ is given by
$\varepsilon_i(h_j):=\delta_{i,j}$ for $j=1, \ldots, n$, where
$\delta$ is the usual Kronecker symbol.  For $i=1,\ldots,n-1$ let
$\alpha_i\in {P^\vee}^*$ be defined by $\alpha_i := \varepsilon_i -
\varepsilon_{i+1}$. 
\begin{defn}
  The quantum general linear algebra $U_q(\mathfrak{gl}_n)$ is the
  associative $\mathbb{Q}(q)$-algebra with $1$ 
  generated by the elements
  $e_i, f_i$ $(i=1, \ldots, n-1)$ and $q^{h}$ $(h\in P^\vee)$
  with the defining relations
  \begin{eqnarray*}
    &&q^0=1,\quad q^hq^{h'}=q^{h+h'}\\
    &&q^he_iq^{-h}=q^{\alpha_i(h)}e_i,\quad 
    q^hf_iq^{-h}=q^{-\alpha_i(h)}f_i,\\
    &&e_if_j-f_je_i=\delta_{i,j}\frac{K_i-K_i^{-1}}{q-q^{-1}},\quad
    \mbox{ where }K_i := q^{h_{i}-h_{i+1}},\\
    &&e_i^2e_j-(q+q^{-1})e_ie_je_i+e_je_i^2=0\quad\mbox{ for }|i-j|=1,\\
    &&f_i^2f_j-(q+q^{-1})f_if_jf_i+f_jf_i^2=0\quad\mbox{ for }|i-j|=1,\\
    &&e_ie_j=e_je_i,\quad f_if_j=f_jf_i\quad\mbox{ for }|i-j|>1.
  \end{eqnarray*}
\end{defn}
We note that the subalgebra generated by the $K_i, e_i, f_i$ ($i = 1,
\dots, n-1$) is isomorphic with $U_q(\mathfrak{sl}_n)$.
$U_q(\mathfrak{gl}_n)$ is a Hopf algebra with comultiplication
$\Delta$, counit $\varepsilon$ the unique algebra homomorphisms, and
antipode $S$ the unique invertible anti-homomorphism of algebras,
defined on generators by
\begin{eqnarray*}
  &&\Delta(q^h)=q^h\otimes q^h,\\
  &&\Delta(e_i)=e_i\otimes K_i^{-1}+1\otimes e_i,\quad
  \Delta(f_i)=f_i\otimes 1+K_i\otimes f_i,\\
  &&\varepsilon(q^h)=1,\quad \varepsilon(e_i)=\varepsilon(f_i)=0,\\
  && S(q^h)=q^{-h},\quad S(e_i)=-e_iK_i,\quad S(f_i)=-K_i^{-1}f_i .
\end{eqnarray*}
Let $V_{\mathbb{Q}(q)}$ be a free $\mathbb{Q}(q)$-vector space with
basis $\{v_1,\ldots,v_n\}$. We make $V_{\mathbb{Q}(q)}$ into a
$U_q(\mathfrak{gl}_n)$-module via
\begin{eqnarray*} 
  q^h v_j&=& 
  q^{\varepsilon_j(h)}v_j \mbox{ for }h\in P^\vee,\,j=1,\ldots,n\\
  e_iv_j&=&\left\{
    \begin{array}{ll}
      v_i&\mbox{ if }j=i+1\\
      0&\mbox{ otherwise}
    \end{array}
  \right. \hspace{3cm}
  f_iv_j=\left\{
    \begin{array}{ll}
      v_{i+1}&\mbox{ if }j=i\\
      0&\mbox{ otherwise.}
    \end{array}
  \right.
\end{eqnarray*}
We call $V_{\mathbb{Q}(q)}$ the \emph{vector representation} of
$U_q(\mathfrak{gl}_n)$.  This is also a $U_q(\mathfrak{sl}_n)$-module,
by restriction of the action.

Let $[l]_q$ (in $\mathbb{Z}[q,q^{-1}]$ resp.~in $R$) be defined by
$[l]_q:=\sum_{i=0}^{l-1}q^{2i-l+1}$, $[l]_q!:=[l]_q[l-1]_q\ldots[1]_q$
and let $e_i^{(l)}:=\frac{e_i^l}{[l]_q!}$,
$f_i^{(l)}:=\frac{f_i^l}{[l]_q!}$.  Let $\mathbf
U_{\mathbb{Z}[q,q^{-1}]}$ (resp.,
$\mathbf{U}'_{\mathbb{Z}[q,q^{-1}]}$) be the
$\mathbb{Z}[q,q^{-1}]$-subalgebra of $U_q(\mathfrak{gl}_n)$ generated
by the $q^h$ (resp., $K_i$) and the divided powers $e_i^{(l)}$ and
$f_i^{(l)}$ for $l\geq 0$. $\mathbf U_{\mathbb{Z}[q,q^{-1}]}$ is a
Hopf algebra and we have
\begin{eqnarray*} 
  \Delta(e_i^{(l)})&=&
  \sum_{k=0}^l q^{k(l-k)}e_i^{(l-k)}\otimes K_i^{k-l} e_i^{(k)}\\
  \Delta(f_i^{(l)})&=&
  \sum_{k=0}^l q^{-k(l-k)}f_i^{(l-k)}K_i^k\otimes f_i^{(k)}\\
  S(e_i^{(l)})&=& (-1)^lq^{l(l-1)}e_i^{(l)}K_i^l\\
  S(f_i^{(l)})&=& (-1)^lq^{-l(l-1)}K_i^{-l}f_i^{(l)}\\
  \varepsilon(e_i^{(l)})&=&\varepsilon(f_i^{(l)})=0.
\end{eqnarray*}
Furthermore, the $\mathbb{Z}[q,q^{-1}]$-lattice
$V_{\mathbb{Z}[q,q^{-1}]}$ in $V_{\mathbb{Q}(q)}$ generated by the
$v_i$ is invariant under the action of $\mathbf
U_{\mathbb{Z}[q,q^{-1}]}$ and of $\mathbf{U}'_{\mathbb{Z}[q,q^{-1}]}$.
Now, make the transition from $\mathbb{Z}[q,q^{-1}]$ to an arbitrary
commutative ring $R$ with $1$: Let $q\in R$ be invertible and consider
$R$ as a $\mathbb{Z}[q,q^{-1}]$-module via specializing $q\in
\mathbb{Z}[q,q^{-1}]\mapsto q\in R$. Then, let $\mathbf
U_R:=R\otimes_{\mathbb{Z}[q,q^{-1}]}\mathbf U_{\mathbb{Z}[q,q^{-1}]}$
and
$\mathbf{U}'_R:=R\otimes_{\mathbb{Z}[q,q^{-1}]}\mathbf{U}'_{\mathbb{Z}[q,q^{-1}]}$.
$\mathbf U_R$ inherits a Hopf algebra structure from $\mathbf
U_{\mathbb{Z}[q,q^{-1}]}$ and
$V_R:=R\otimes_{\mathbb{Z}[q,q^{-1}]}V_{\mathbb{Z}[q,q^{-1}]}$ is a
$\mathbf U_R$-module and by restriction also a $\mathbf{U}'_R$-module.

If no ambiguity arises, we will henceforth omit the index $R$ and
write $\mathbf{U}$, $\mathbf{U}'$ instead of $\mathbf{U}_R$,
$\mathbf{U}'_R$ and $V$ instead of $V_R$. Furthermore, we will write
$e_i^{(l)}$ as shorthand for $1\otimes e_i^{(l)}\in \mathbf{U}_R$,
similarly for the $f_i^{(l)}$, $K_i$ short for $1\otimes K_i$, and
$q^h$ short for $1\otimes q^h$.

Suppose $W, W_1$ and $W_2$ are $\mathbf U$-modules, then one can
define $\mathbf U$-module structures on $W_1\otimes W_2=W_1\otimes_R
W_2$ and $W^*=\mathrm{Hom}_R(W,R)$ using the comultiplication and the
antipode  by setting $x(w_1\otimes
w_2)=\Delta(x)(w_1\otimes w_2)$ and $(xf)(w)=f(S(x)w)$. 

\begin{defn}
  Let $r,s$ be nonnegative integers. 
  The $\mathbf U$-module $V^{\otimes r}\otimes {V^*}^{\otimes s}$ is
  \emph{called mixed tensor space}.
\end{defn}
Let $I(n,r)$ be the set of $r$-tuples with entries in $\{1,\ldots,n\}$
and let $I(n,s)$ be defined similarly.
The elements of $I(n,r)$ (and $I(n,s)$) are called \emph{multi indices}.
Note that the symmetric groups
$\mathfrak{S}_r$ and $\mathfrak{S}_s$ act on  $I(n,r)$ and  $I(n,s)$
respectively from the right
by place permutation, that is if $s_j$ is a Coxeter generator and
$\mathbf i=(i_1,i_2,\ldots )$ is a multi index, then let $\mathbf
i.s_j=(i_1,\ldots,i_{j-1},i_{j+1},i_j,i_{j+2},\ldots)$. 
Then a basis of the mixed
tensor space $V^{\otimes r}\otimes {V^*}^{\otimes s}$ can be indexed
by $I(n,r)\times I(n,s)$. For $\mathbf i=(i_1,\ldots,i_r)\in I(n,r)$ and
$\mathbf j=(j_1,\ldots,j_s)\in I(n,s)$ let
\[
v_{\mathbf i|\mathbf
  j}=v_{i_1}\otimes \ldots\otimes v_{i_r}\otimes v_{j_1}^*\otimes
\ldots\otimes v_{j_s}^*\in V^{\otimes r}\otimes {V^*}^{\otimes s}
\]
where $\{v_1^*,\ldots,v_n^*\}$ is the basis of $V^*$ dual to
$\{v_1,\ldots,v_n\}$. 
Then $\{v_{\mathbf i|\mathbf j}\mid \mathbf i\in I(n,r),\mathbf j\in
I(n,s)\}$ is a basis of $
V^{\otimes r}\otimes {V^*}^{\otimes s}$. 

We have another algebra acting on $V^{\otimes r}\otimes {V^*}^{\otimes
  s}$, namely the quantized walled Brauer algebra
$\mathfrak{B}_{r,s}^n(q)$ introduced in \cite{dipperdotystoll}. This
algebra is defined as a diagram algebra, in terms of Kauffman's
tangles. A presentation by generators and relations can be found in
\cite{dipperdotystoll}. Note that this
algebra and its action coincides with Leduc's algebra (\cite{leduc},
see the remarks in \cite{dipperdotystoll}).

Here, all we need is the action of generators given in the following
diagrams.  $\mathfrak{B}_{r,s}^n(q)$ is generated by the elements
\[
E=\raisebox{-.1cm}{\epsfbox{generators.3}},\quad
S_i=\raisebox{-.1cm}{\epsfbox{generators.1}},\quad
\hat{S}_j=\raisebox{-.1cm}{\epsfbox{generators.2}}
\]
where the non-propagating edges in $E$ connect vertices in columns
$r$, $r+1$ while the crossings in $S_i$ and $\hat{S}_j$ connect
vertices in columns $i$, $i+1$ and columns $r+j$, $r+j+1$
respectively. If $v_{\mathbf i|\mathbf j}=v\otimes v_{i_r}\otimes
v_{j_1}^*\otimes v'$, then the action of the generators on $V^{\otimes
  r}\otimes {V^*}^{\otimes s}$ is given by
\begin{eqnarray*}
  v_{\mathbf i|\mathbf j}E&=&\delta_{i_r,j_1}\sum_{s=1}^nq^{2i_r-n-1}
  v\otimes v_s\otimes v_s^*\otimes v'\\
  v_{\mathbf i|\mathbf j}S_i&=& \left\{
    \begin{array}{ll}
      q^{-1} v_{\mathbf i|\mathbf j}&\mbox{ if }i_i=i_{i+1}\\
      v_{\mathbf i.s_i|\mathbf j}&\mbox{ if }i_i<i_{i+1}\\
      v_{\mathbf i.s_i|\mathbf j}+ (q^{-1}-q)
      v_{\mathbf i|\mathbf j}&\mbox{ if }i_i> i_{i+1}
    \end{array}
  \right.\\
  v_{\mathbf i|\mathbf j}\hat{S}_j&=& \left\{
    \begin{array}{ll}
      q^{-1} v_{\mathbf i|\mathbf j}&\mbox{ if }j_j=j_{j+1}\\
      v_{\mathbf i|\mathbf j.s_j}&\mbox{ if }j_j>j_{j+1}\\
      v_{\mathbf i|\mathbf j.s_j}+ (q^{-1}-q)
      v_{\mathbf i|\mathbf j}&\mbox{ if }j_j<j_{j+1}.
    \end{array}
  \right.
\end{eqnarray*}

The action of $\mathfrak{B}_{r,s}^n(q)$ 
on $V^{\otimes r}\otimes {V^*}^{\otimes s}$ commutes with the action
of $\mathbf U$. 
\begin{thm}[\cite{dipperdotystoll}]\label{thm:Schur--Weyl_I}
  Let $\sigma: \mathfrak{B}_{r,s}^n(q)\to
  \mathrm{End}_{\mathbf U}(V^{\otimes r}\otimes
  {V^*}^{\otimes s}) $ be the representation of the quantized walled
  Brauer algebra on the mixed tensor space. 
  Then $\sigma$ is surjective, that is
  \[
  \mathrm{End}_{\mathbf U}(V^{\otimes r}\otimes
  {V^*}^{\otimes s})
  \cong 
  \mathfrak{B}_{r,s}^n(q)/_{\mathrm{ann}_{\mathfrak{B}_{r,s}^n(q)}(V^{\otimes r}\otimes
    {V^*}^{\otimes s})}.
  \] 
\end{thm}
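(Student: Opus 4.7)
The plan is to reduce surjectivity of $\sigma$ to the generic situation over $\mathbb{Q}(q)$ and then descend to an arbitrary commutative ring $R$ via an integral specialization argument, exploiting the $\mathbb{Z}[q,q^{-1}]$-forms $\mathbf U_{\mathbb{Z}[q,q^{-1}]}$ and $V_{\mathbb{Z}[q,q^{-1}]}$ introduced earlier. The starting observation is that because $V^{\otimes r}\otimes {V^*}^{\otimes s}$ and $\mathrm{End}_R(V^{\otimes r}\otimes {V^*}^{\otimes s})$ are free $R$-modules with bases indexed by $I(n,r)\times I(n,s)$, the centralizer $\mathrm{End}_{\mathbf U}(V^{\otimes r}\otimes {V^*}^{\otimes s})$ is the solution space of an explicit system of $R$-linear equations encoding commutation with the divided-power generators $e_i^{(l)}$, $f_i^{(l)}$ and the $K_i^{\pm1}$; the coefficient matrix of this system already lives over $\mathbb{Z}[q,q^{-1}]$.

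For the generic case, over $\mathbb{Q}(q)$ the module $V^{\otimes r}\otimes {V^*}^{\otimes s}$ is semisimple for $U_{\mathbb{Q}(q)}(\mathfrak{gl}_n)$ and decomposes into isotypic components indexed by rational dominant weights of mixed shape. The double-centralizer theorem then identifies $\mathrm{End}_{\mathbf U}(V^{\otimes r}\otimes {V^*}^{\otimes s})$ with a direct sum of matrix algebras whose $\mathbb{Q}(q)$-dimension is computable from the $\mathbf U$-multiplicities. On the Brauer side, using the cellular/semisimple representation theory of $\mathfrak{B}_{r,s}^n(q)$ over $\mathbb{Q}(q)$ (following Kosuda--Murakami and Leduc), one matches these dimensions; alternatively, and more self-containedly, one exhibits walled Brauer diagrams whose images under $\sigma$ form a linearly independent family modulo the annihilator by a triangularity argument on the lexicographically ordered basis $\{v_{\mathbf i|\mathbf j}\}$, using the explicit action of the generators $E$, $S_i$, $\hat S_j$ given above.

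The descent to arbitrary $R$ is where the real work lies. Both $\mathrm{End}_{\mathbf U_R}(V^{\otimes r}\otimes {V^*}^{\otimes s})$ and the image of $\sigma$ must be shown to be free $R$-modules of rank equal to their generic dimensions; the easy inclusion $\mathrm{im}(\sigma)\subseteq \mathrm{End}_{\mathbf U}$ together with matching ranks then forces equality. For $\mathrm{im}(\sigma)$ the cleanest approach is to produce a $\mathbb{Z}[q,q^{-1}]$-spanning set by diagram elements and to verify that it remains a basis after any base change, again by a triangularity computation on the action formulas for $E$, $S_i$, $\hat S_j$. The main obstacle is controlling the centralizer: one must rule out a rank drop of the kernel of the integral linear system at special specializations of $q$ (such as roots of unity) or over rings with torsion. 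The efficient way around this is to bypass a direct integral analysis of the centralizer equations: once an integral basis of $\mathrm{im}(\sigma)$ is constructed and shown to survive specialization, and once its generic size equals $\dim_{\mathbb{Q}(q)}\mathrm{End}_{\mathbf U}$ from the double-centralizer step, a flatness/specialization argument combined with the tautological inclusion yields the equality $\mathrm{im}(\sigma)=\mathrm{End}_{\mathbf U}(V^{\otimes r}\otimes {V^*}^{\otimes s})$ over every $R$ in which $q$ is invertible.
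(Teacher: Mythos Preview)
Note first that the present paper does not itself prove this theorem; it is quoted from \cite{dipperdotystoll}, so there is no in-paper argument to compare against line by line. That said, your proposal contains a genuine gap in the descent step, independent of what \cite{dipperdotystoll} actually does.

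Your ``flatness/specialization argument'' does not close. Grant that (i) $\mathrm{im}(\sigma)$ is $R$-free of rank $d$ equal to the generic dimension $\dim_{\mathbb{Q}(q)}\mathrm{End}_{\mathbf U}$, and (ii) the tautological inclusion $\mathrm{im}(\sigma)\subseteq \mathrm{End}_{\mathbf U_R}$ holds. This does \emph{not} force equality, because centralizers can only \emph{grow} under specialization: the coefficient matrix of the defining $\mathbb{Z}[q,q^{-1}]$-linear system can lose rank upon base change, which enlarges the solution space. (Toy example: the centralizer of $\mathrm{diag}(1,q)$ on $R^2$ consists of the diagonal matrices when $q-1$ is a unit, but is the full endomorphism algebra when $q$ specializes to $1$.) Thus an \emph{upper} bound on the rank of $\mathrm{End}_{\mathbf U_R}$ is exactly what is missing, and your proposed ``bypass'' supplies none. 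Nor would equality over $\mathbb{Z}[q,q^{-1}]$ rescue the argument: formation of centralizers does not commute with base change, so the natural map $R\otimes_{\mathbb{Z}[q,q^{-1}]}\mathrm{End}_{\mathbf U_{\mathbb{Z}[q,q^{-1}]}}\to\mathrm{End}_{\mathbf U_R}$ is injective but need not be surjective; and even over the integral form, a free submodule of the correct rank need not exhaust a free module of that rank (think of $2\mathbb{Z}\subset\mathbb{Z}$). Any correct proof over arbitrary $R$ must produce a direct upper bound on $\mathrm{End}_{\mathbf U_R}$---for instance by realizing it explicitly as a quotient or submodule of something whose $R$-rank is visibly $d$. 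You correctly identify this as ``the main obstacle'' but then assert it can be sidestepped by matching ranks from below; it cannot.
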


The main result of this paper is the other half of the preceding
theorem:
\begin{thm}\label{thm:Schur--Weyl_II}
  Let $\rho_{\mathrm{mxd}}: \mathbf U\to\mathrm{End}_{ \mathfrak{B}_{r,s}^n(q) } 
  (V^{\otimes r}\otimes
  {V^*}^{\otimes s}) $ be the representation of the quantum group. 
  Then $\rho_{\mathrm{mxd}}$ is surjective, that is
  \[
  \mathrm{End}_{ \mathfrak{B}_{r,s}^n(q) }(V^{\otimes r}\otimes
  {V^*}^{\otimes s})
  \cong \mathbf U
  /_{\mathrm{ann}_{\mathbf U}(V^{\otimes r}\otimes
    {V^*}^{\otimes s})}.
  \] 
\end{thm}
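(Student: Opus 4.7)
The plan is to follow the reduction strategy sketched in the introduction. I would set $S_q(n;r,s):=\mathrm{End}_{\mathfrak{B}_{r,s}^n(q)}(V^{\otimes r}\otimes{V^*}^{\otimes s})$, so that the inclusion $\rho_{\mathrm{mxd}}(\mathbf U)\subseteq S_q(n;r,s)$ is immediate from the commutation of the two actions, and then prove the reverse inclusion. The main device is the $R$-linear embedding $\kappa:V^{\otimes r}\otimes{V^*}^{\otimes s}\hookrightarrow V^{\otimes r+(n-1)s}$ obtained by replacing each dual basis vector $v_j^*$ by a quantum wedge of the $n-1$ remaining basis vectors. This map is $\mathbf U'$-linear (though not $\mathbf U$-linear, since $V^*\cong\Lambda^{n-1}V$ only after twisting by a determinant character which is trivial on $\mathbf U'$). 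Since every element of the ordinary $q$-Schur algebra $S_q(n,r+(n-1)s)$ is generated by $\rho_{\mathrm{ord}}(\mathbf U')$, each such element preserves the image of $\kappa$ and restricts there to an endomorphism commuting with $\mathfrak{B}_{r,s}^n(q)$, yielding a well-defined algebra homomorphism $\pi:S_q(n,r+(n-1)s)\to S_q(n;r,s)$ with $\rho_{\mathrm{mxd}}|_{\mathbf U'}=\pi\circ\rho_{\mathrm{ord}}$. By classical quantized Schur--Weyl duality $\rho_{\mathrm{ord}}$ is surjective and the passage from $\mathbf U$ to $\mathbf U'$ is harmless, so it suffices to prove $\pi$ surjective.

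For this I would pass to coefficient spaces. Because $S_q(n,r+(n-1)s)$ is $R$-free of rank independent of $R$ and $q$, its dual coalgebra $A_q(n,r+(n-1)s)$ possesses, by Huang--Zhang, a bideterminant basis. On the other hand, $S_q(n;r,s)$ is cut out of the free $R$-module $\mathrm{End}_R(V^{\otimes r}\otimes{V^*}^{\otimes s})$ by the finite system of linear equations expressing commutation with the generators $E$, $S_i$, $\hat{S}_j$ of $\mathfrak{B}_{r,s}^n(q)$. Lemma~\ref{lem:construction} then produces a factor coalgebra $A_q(n;r,s)$ of the dual of $\mathrm{End}_R(V^{\otimes r}\otimes{V^*}^{\otimes s})$ with $A_q(n;r,s)^*\cong S_q(n;r,s)$. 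I would next exhibit an explicit coalgebra map $\iota:A_q(n;r,s)\to A_q(n,r+(n-1)s)$, modelled on $\kappa$, and verify $\iota^*=\pi$.

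The technical heart of the argument is the construction of a rational bideterminant basis of $A_q(n;r,s)$ indexed by pairs of semistandard tableaux of \emph{mixed shape}, in which each dual factor contributes $n-1$ auxiliary columns. Linear independence should follow by specialization to $q=1$ and appeal to the classical bideterminant basis of \cite{dipperdoty,tange}, combined with a flatness argument over $\mathbb{Z}[q,q^{-1}]$; spanning requires a quantum straightening argument adapted from the ordinary $q$-Schur algebra, together with the observation that the Brauer commutation equations kill precisely the non-rational bideterminants outside the image of $\iota$. Once the basis is established, one defines an $R$-linear splitting by sending each ``rational'' bideterminant in $A_q(n,r+(n-1)s)$ to its preimage under $\iota$ and all remaining basis elements to zero; this provides an $R$-linear left inverse of $\iota$ whose dual is the desired right inverse of $\pi=\iota^*$, forcing $\pi$ surjective and hence $\rho_{\mathrm{mxd}}$ surjective.

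The main obstacle I anticipate is the construction and control of this rational bideterminant basis. One must simultaneously (i) identify the subspace of $A_q(n,r+(n-1)s)$ cut out by ``anti-symmetry within each dual block'' that is precisely the image of $\iota$, (ii) parameterize a basis of $A_q(n;r,s)$ by mixed tableau data compatible with $\iota$, and (iii) make all of this uniform in $R$ and invertible $q$. The interaction among the comultiplication on $A_q(n,r+(n-1)s)$, the quantum straightening rules, and the Brauer commutation equations is delicate, especially because the dual blocks destroy the natural symmetric-group symmetry of the ordinary bideterminant basis; getting these three ingredients to cooperate over an arbitrary commutative ring is where most of the work will lie.
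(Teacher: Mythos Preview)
Your overall architecture matches the paper exactly: define $S_q(n;r,s)$ as the centralizer, embed via $\kappa$, get $\pi$, dualize to $\iota:A_q(n;r,s)\to A_q(n,r+(n-1)s)$, build a rational bideterminant basis of $A_q(n;r,s)$, and read off a left inverse of $\iota$. The difference lies in how you propose to control that basis.

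For linear independence the paper does \emph{not} specialize to $q=1$ or invoke any flatness. Instead it computes $\iota$ on a rational bideterminant directly: a quantum Jacobi Ratio Theorem shows that $\iota$ sends an $(n-1)\times(n-1)$ ``starred'' minor to a power of $-q$ times the complementary $1\times 1$ minor, and more generally sends $(\mathfrak{s}|\mathfrak{s}')^*$ to a power of $-q$ times an ordinary bideterminant. Combined with an explicit bijection between standard rational $(\rho,\sigma)$-tableaux and standard $\lambda$-tableaux with $\sum_{i\le s}\lambda_i\ge (n-1)s$, this shows $\iota$ carries the proposed rational bideterminants to pairwise distinct standard bideterminants of $A_q(n,r+(n-1)s)$, which are linearly independent by Huang--Zhang. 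This is cleaner than your route: a specialization argument would need to know in advance that formation of $A_q(n;r,s)$ commutes with base change $\mathbb{Z}[q,q^{-1}]\to R$, which is exactly what freeness with a basis independent of $R$ delivers \emph{a posteriori}, so there is a circularity risk you would have to unwind.

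For spanning, your phrasing ``the Brauer commutation equations kill precisely the non-rational bideterminants outside the image of $\iota$'' is not what happens. There are no ``non-rational'' bideterminants in $A_q(n;r,s)$ to kill; the straightening takes place entirely inside $A_q(n;r,s)$. The paper uses the Brauer relation $\sum_k x_{ik}x_{jk}^*=\delta_{ij}\,\mathfrak{det}_q^{(1)}$ (and its iterates $\mathfrak{det}_q^{(k)}$) as the new ingredient beyond ordinary straightening: when a rational tableau violates the first-row condition $\mathrm{first}_i\le i$, an explicit identity modulo $\mathfrak{det}_q^{(1)}$ expresses the offending bideterminant in terms of bideterminants either with fewer boxes (absorbing a $\mathfrak{det}_q^{(1)}$ factor) or with strictly smaller content. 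Ordinary Huang--Zhang straightening handles the individual $\mathfrak{r}$-- and $\mathfrak{s}$--parts. So the key lemma you should aim for is not a statement about the image of $\iota$ but a content-lowering relation inside $A_q(n;r,s)$ coming from the $E$-generator relations.
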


Theorems~\ref{thm:Schur--Weyl_I} and \ref{thm:Schur--Weyl_II} together
state that the mixed tensor space is a $(\mathbf
U, \mathfrak{B}_{r,s}^n(q))$-bimodule with the double centralizer
property. In the literature, this is also called \emph{Schur--Weyl
  Duality}.  Theorem~\ref{thm:Schur--Weyl_II} will be proved at the end
of this paper.

For $s=0$, this is well known. $\mathfrak{B}_{m,0}^n(q)$ is the Hecke
algebra $\mathcal{H}_m$, and $V^{\otimes m}$ is the (ordinary) tensor
space. 

\begin{defn}
  If $m$ is a positive integer, let $\mathcal{H}_m$ be the associative
  $R$-algebra with one generated by elements $T_1,\ldots,T_{m-1}$ with
  respect to the relations
  \begin{align*}
    &(T_i+q)(T_i-q^{-1})=0 \mbox{ for }i=1,\ldots,m-1\\
    &T_iT_{i+1} T_i=T_{i+1} T_iT_{i+1}\mbox{ for }i=1,\ldots,m-2\\
    &T_iT_j=T_jT_i\mbox{ for } |i-j|\geq 2.
  \end{align*}
\end{defn}

  If $w\in\mathfrak{S}_m$ is an element of the symmetric group on $m$
  letters, and $w=s_{i_1}s_{i_2}\ldots s_{i_l}$ is a reduced
  expression as a product of Coxeter generators, let
  $T_w=T_{i_1}T_{i_2}\ldots T_{i_l}$. Then the set $\{T_w\mid
  w\in\mathfrak{S}_m\}$ is a basis of $\mathcal{H}_m$. 

  Note that $\mathcal{H}_m$ acts on $V^{\otimes m}$, since 
  $\mathcal{H}_m\cong \mathfrak{B}_{m,0}^n(q)$, the isomorphism given
  by $T_i\mapsto S_i$. 

\begin{thm}[\cite{dipperjamesschur,green}]
  Let $\rho_{\mathrm{ord}}:\mathbf{U} \to \mathrm{End}_R(V^{\otimes
    m})$ be the representation of $\mathbf U$ on $ V^{\otimes
    m}$. Then $\operatorname{im} \rho_{\mathrm{ord}} =
  \mathrm{End}_{\mathcal{H}_m}(V^{\otimes m})$.  This algebra is
  called the $q$-Schur algebra and denoted by $S_q(n,m)$.
\end{thm}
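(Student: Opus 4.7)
The plan is to handle the two inclusions separately, following the coalgebra-dual strategy that the paper itself uses later for the mixed case.

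For the inclusion $\operatorname{im}\rho_{\mathrm{ord}} \subseteq \mathrm{End}_{\mathcal H_m}(V^{\otimes m})$ I would simply check that the action of each generator of $\mathbf U$ on $V^{\otimes m}$ commutes with each generator $T_i$ of $\mathcal H_m$ (acting as $S_i$) on every basis vector $v_{\mathbf i}$. Because $\mathbf U$ acts via iterated comultiplication and $T_i$ acts only on the $i$-th and $(i+1)$-th tensor factors, the check reduces to verifying that $T_i$ is a $\mathbf U$-endomorphism of $V\otimes V$, which is the standard quantum $R$-matrix calculation.

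For the reverse inclusion I would pass to the dual coalgebra. The centralizer $\mathrm{End}_{\mathcal H_m}(V^{\otimes m})$ is cut out of $\mathrm{End}_R(V^{\otimes m})$ by the linear equations imposed by commutation with each $T_i$, and therefore its $R$-linear dual is a quotient coalgebra $C/I$ of $C=\mathrm{End}_R(V^{\otimes m})^*$. On the other hand, $\operatorname{im}\rho_{\mathrm{ord}}$ is dual to the coefficient space $A_q(n,m) \subseteq \mathbf U^*$ spanned by the matrix coefficients of $V^{\otimes m}$, and the easy inclusion gives a coalgebra surjection $C/I \twoheadrightarrow A_q(n,m)$. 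The theorem then reduces to showing this surjection is an isomorphism, equivalently that both sides are free $R$-modules of the same rank.

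For the rank count I would invoke the bideterminant basis of $A_q(n,m)$ from \cite{huangzhang}, indexed by pairs $(S,T)$ of semistandard $\lambda$-tableaux of shape $\lambda \vdash m$ with entries in $\{1,\dots,n\}$; this pins down $\operatorname{rank}_R A_q(n,m)$ independently of $R$ and $q$. For the centralizer side I would first treat the generic case $R = \mathbb Q(q)$, where $\mathbf U_{\mathbb Q(q)}$ and $\mathcal H_{m,\mathbb Q(q)}$ are semisimple and Jimbo's decomposition $V_{\mathbb Q(q)}^{\otimes m} \cong \bigoplus_{\lambda \vdash m,\, \ell(\lambda)\le n} V(\lambda)\otimes S^\lambda$ produces a classical double-centralizer pairing, giving centralizer rank equal to the number of bideterminant pairs. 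Base change from a $\mathbb Z[q,q^{-1}]$-integral form then transports this rank count to arbitrary $R$.

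The principal obstacle I expect is the freeness statement underlying base change: proving that $\mathrm{End}_{\mathcal H_m}(V_{\mathbb Z[q,q^{-1}]}^{\otimes m})$ is a free $\mathbb Z[q,q^{-1}]$-module of the generic rank. This is essentially the content of the Dipper--James codeterminant (or cellular) basis of the $q$-Schur algebra; once it is in place the equality of ranks, and hence surjectivity of $\rho_{\mathrm{ord}}$, follows formally from the inclusion $A_q(n,m) \subseteq C/I$ established in the first step.
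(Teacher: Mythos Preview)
The paper does not prove this theorem at all; it is quoted as a known result from \cite{dipperjamesschur,green} and used as input for the mixed-tensor case. So there is no ``paper's own proof'' to compare against, and your outline is really a sketch of how the cited references proceed.

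Your strategy is essentially the one the paper later runs for $S_q(n;r,s)$, but with the two sides of the rank count reversed. In the paper's conventions, $A_q(n,m)$ is \emph{defined} as your $C/I$ (Lemma~\ref{lem:construction} and the definition following it), and the identification in Section~\ref{sec:Aqnm} with the degree-$m$ part of the quantum matrix algebra is precisely what makes the Huang--Zhang bideterminant basis available. So the bideterminant basis already computes the rank of the \emph{centralizer} $\mathrm{End}_{\mathcal H_m}(V^{\otimes m})=(C/I)^*$ directly over any $R$, with no appeal to the generic case or to base change. Your detour through $\mathbb{Q}(q)$ and Jimbo's decomposition for that side is unnecessary.

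What genuinely remains is the surjectivity of $\rho_{\mathrm{ord}}:\mathbf U\to \mathrm{End}_{\mathcal H_m}(V^{\otimes m})$, i.e.\ that the coefficient space of $\mathbf U$ fills all of $C/I$. Here your proposal has a gap: you plan to pin down the rank of $\operatorname{im}\rho_{\mathrm{ord}}$ by base-changing from $\mathbb{Z}[q,q^{-1}]$ and then, for the needed freeness, invoke the Dipper--James codeterminant basis. But that basis is essentially the content of the theorem you are trying to prove, so the argument is circular. The references you cite avoid this by producing explicit elements of $\rho_{\mathrm{ord}}(\mathbf U)$ (weight-space projectors together with the action of divided powers $e_i^{(l)},f_i^{(l)}$) that visibly span the centralizer, which is also how the paper argues in the proof of Theorem~2.1 that $\mathbf U'$ already suffices.
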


 We will refer to $V^{\otimes m}$ as ordinary tensor space.

\section{Mixed tensor space as a submodule}\label{section:submodule}

Recall that $\mathbf{U}'$ is the subalgebra of $\mathbf{U}$
corresponding to the Lie algebra $\mathfrak{sl}_n$.

\begin{thm}
  If $m$ is a nonnegative integer, let $\rho_{\mathrm{ord}}:\mathbf U\to
  \mathrm{End}_R(V^{\otimes m})$ be the representation of $\mathbf U$
  on $V^{\otimes m}$. Then 
  \[
  \rho_{\mathrm{ord}}(\mathbf U)=\rho_{\mathrm{ord}}(\mathbf U').
  \]
\end{thm}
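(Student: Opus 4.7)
The inclusion $\rho_{\mathrm{ord}}(\mathbf U')\subseteq\rho_{\mathrm{ord}}(\mathbf U)$ is immediate. For the reverse, since the Cartan part of $\mathbf U$ is generated over the Cartan part of $\mathbf U'$ by the elements $q^{h_k}$ for $k=1,\ldots,n$---the differences $q^{h_k-h_{k+1}}=K_k$ already lying in $\mathbf U'$---it suffices to show that $\rho_{\mathrm{ord}}(q^{h_k})\in\rho_{\mathrm{ord}}(\mathbf U')$ for each $k$.

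On the basis $\{v_{\mathbf i}\}$, the operator $\rho_{\mathrm{ord}}(q^{h_k})$ is diagonal with $q^{h_k}\,v_{\mathbf i}=q^{\lambda_k(\mathbf i)}\,v_{\mathbf i}$, where $\lambda_k(\mathbf i)=|\{j:i_j=k\}|$. Decomposing $V^{\otimes m}=\bigoplus_\lambda V^{\otimes m}_\lambda$ into weight spaces, with $\lambda=(\lambda_1,\ldots,\lambda_n)$ running over the finitely many compositions of $m$ into at most $n$ non-negative parts, we obtain
\[
\rho_{\mathrm{ord}}(q^{h_k}) = \sum_{\lambda} q^{\lambda_k}\,P_\lambda,
\]
where $P_\lambda$ is the projection onto $V^{\otimes m}_\lambda$. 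The problem therefore reduces to showing that each $P_\lambda$ lies in $\rho_{\mathrm{ord}}(\mathbf U')$.

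To construct the $P_\lambda$ inside $\rho_{\mathrm{ord}}(\mathbf U')$, observe first that since $\sum_k\lambda_k=m$ is fixed, distinct compositions $\lambda$ produce distinct $\mathfrak{sl}_n$-weight tuples $(\lambda_1-\lambda_2,\ldots,\lambda_{n-1}-\lambda_n)$. Hence the weight-space decomposition of $V^{\otimes m}$ as a $\mathbf U$-module agrees with its decomposition as a $\mathbf U'$-module, and each $P_\lambda$ is a joint spectral projection of the operators $K_1,\ldots,K_{n-1}$. I would build $P_\lambda$ as a finite integer-polynomial combination of the Lusztig divided-power elements $\left[\begin{smallmatrix}K_i;\,c\\ t\end{smallmatrix}\right]\in \mathbf U'_{\mathbb Z[q,q^{-1}]}$, whose eigenvalue on the $\mathfrak{sl}_n$-weight-$\mu$ subspace is the integer Gauss binomial $\left[\begin{smallmatrix}\mu_i+c\\ t\end{smallmatrix}\right]_q$. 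Because only finitely many weights appear in $V^{\otimes m}$, a finite such combination can be chosen to separate them, producing $P_\lambda$ over $\mathbb Z[q,q^{-1}]$, after which one specialises to arbitrary $R$ by base change.

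The main obstacle is precisely this last step: producing the weight idempotents uniformly over an arbitrary commutative ring $R$, rather than over $\mathbb Q(q)$ where one could use naive Lagrange interpolation in the $K_i$. Working with the Lusztig integral form is what makes this possible, since the divided powers $e_i^{(l)}$, $f_i^{(l)}$ and the derived operators $\left[\begin{smallmatrix}K_i;\,c\\ t\end{smallmatrix}\right]$ are integrally defined and act with values in $\mathbb Z[q,q^{-1}]$ on weight spaces, so the construction of $P_\lambda$ behaves well under specialisation.
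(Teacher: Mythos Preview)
Your approach is the same as the paper's: both reduce the problem to showing that the weight projections $P_\lambda=\varphi_\lambda$ lie in $\rho_{\mathrm{ord}}(\mathbf U')$, and both reach for Lusztig's elements $\left[\genfrac{}{}{0pt}{}{K_i;c}{t}\right]\in\mathbf U'_{\mathbb Z[q,q^{-1}]}$ as the tool. The difference is that you stop at an assertion---``a finite such combination can be chosen to separate them, producing $P_\lambda$ over $\mathbb Z[q,q^{-1}]$''---whereas this is precisely the point that requires an argument, since naive interpolation in the $K_i$ would introduce denominators not available over a general $R$. You correctly flag this as ``the main obstacle'' but do not actually resolve it.

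The paper resolves it by a triangular trick rather than by constructing $P_\lambda$ directly. It totally orders the compositions $\lambda$ of $m$ by the lexicographic order on $(\lambda_1-\lambda_2,\ldots,\lambda_{n-1}-\lambda_n)$ and, for each $\lambda$, exhibits the explicit element
\[
u_\lambda=\prod_{i=1}^{n-1}\left[\genfrac{}{}{0pt}{}{K_i;\,m+1}{\lambda_i-\lambda_{i+1}+m+1}\right]\in\mathbf U',
\]
whose eigenvalue on the $\mu$-weight space is $\prod_i\left[\genfrac{}{}{0pt}{}{\mu_i-\mu_{i+1}+m+1}{\lambda_i-\lambda_{i+1}+m+1}\right]_q$. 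Since each top entry is $\geq 1$, each factor equals $1$ when $\mu_i-\mu_{i+1}=\lambda_i-\lambda_{i+1}$ and vanishes when $\mu_i-\mu_{i+1}<\lambda_i-\lambda_{i+1}$; hence $\rho_{\mathrm{ord}}(u_\lambda)$ acts as the identity on $V^{\otimes m}_\lambda$ and as $0$ on every $V^{\otimes m}_\mu$ with $\mu\prec\lambda$. Downward induction on $\preceq$ then expresses each $P_\lambda$ as an $R$-linear combination of the $\rho_{\mathrm{ord}}(u_\mu)$ for $\mu\succeq\lambda$. This unitriangular step is the missing substance of your final paragraph.
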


\begin{proof}
  Define the \emph{weight} of $\mathbf i\in I(n,m)$ to be
  $\mathrm{wt}(\mathbf i)=\lambda
  =(\lambda_1,\ldots,\lambda_n)$, such that
  $\lambda_i$ is the number of entries in 
  $\mathbf i$, that are equal to
  $i$.
  If $\lambda=(\lambda_1,\ldots,\lambda_n)$ is a composition of $m$ into
  $n$ parts, i.~e.~ $\lambda_1+\ldots+\lambda_n=m$,
  let $V^{\otimes m}_\lambda$ be the $R$-submodule of
  $V^{\otimes m}$ generated by all $v_{\mathbf  i}$ with 
  $\mathrm{wt}(\mathbf i)=\lambda$. Then $V^{\otimes m}$ is the direct
  sum of all $V^{\otimes m}_\lambda$, where $\lambda$ runs through the
  set of compositions of $m$ into $n$ parts. Let $\varphi_\lambda$ be the
  projection onto  $V^{\otimes m}_\lambda$.
  \cite{green} shows, that the restriction of 
  $\rho_{\mathrm{ord}}:\mathbf U\to
  S_q(n,m)$ to any subalgebra $\mathbf U'\subseteq \mathbf U$ 
  is surjective, if
  the subalgebra $\mathbf U'$ contains
  the divided powers $e_i^{(l)},f_i^{(l)}$
  and preimages of the projections $\varphi_\lambda$. 
  
  Therefore, we define a partial order on
  the set of compositions of $m$ into $n$ parts by 
  $\lambda\preceq\mu$ if and only if 
  $(\lambda_1-\lambda_2,\lambda_2-\lambda_3,\ldots,
  \lambda_{n-1}-\lambda_n)\leq(\mu_1-\mu_2,\mu_2-\mu_3,\ldots,
  \mu_{n-1}-\mu_n)$ in the lexicographical order. 
  It suffices to show, that for each composition
  $\lambda$, there exists an element $u\in {\mathbf U}'$ such that
  $uv_{\mathbf i}=0$ whenever $\mathrm{wt}(\mathbf i)\prec \lambda$ (i.~e.~
  $\mathrm{wt}(\mathbf i)\preceq \lambda$ and 
  $\mathrm{wt}(\mathbf i)\neq \lambda$) and 
  $uv_{\mathbf i}=v_{\mathbf i}$ whenever $\mathrm{wt}(\mathbf i)=
  \lambda$. 
  In Theorem~4.5 of \cite{lusztig}, it
  is shown that certain elements 
  \[ \left[\genfrac{}{}{0pt}{}{K_i;c}{t}\right]
  :=\prod_{s=1}^t
  \frac{K_iq^{c-s+1}-K_i^{-1}q^{-c+s-1} }{q^s-q^{-s}}
  \]
  are elements of ${\mathbf U}'$ for $i=1,\ldots,n-1,c\in\mathbb{Z}$ and
  $t\in\mathbb{N}$. 
  Let 
  \[u:=\prod_{i=1}^{n-1} 
  \left[\genfrac{}{}{0pt}{}
    {K_i;m+1}{\lambda_i-\lambda_{i+1}+m+1}
  \right],
  \] 
  which is an element of ${\mathbf U}'$ since 
  $\lambda_i-\lambda_{i+1}+m+1>0$. 
  Then $u$ has the desired properties. 
\end{proof}
The next lemma is motivated by \cite[\S6.3]{dipperdoty}.
\begin{lem}\label{lem:monomorphism}
  There is a well defined ${\mathbf U}'$-monomorphism 
  $\kappa:{V^*}\to
  V^{\otimes n-1}$
  given by 
  \begin{eqnarray*}
    v_i^*&\mapsto& (-q)^i\sum_{w\in\mathfrak{S}_{n-1}}
    (-q)^{l(w)}v_{(12\ldots\hat{i}\ldots n).w}\\
    &=&(-q)^i\sum_{w\in\mathfrak{S}_{n-1}}
    (-q)^{l(w)}v_{(12\ldots\hat{i}\ldots n)}T_w
    =(-q)^i v_{(12\ldots\hat{i}\ldots n)}
    \sum_{w\in\mathfrak{S}_{n-1}}
    (-q)^{l(w)}T_w
  \end{eqnarray*}
  where $\hat{i}$ means
  leaving out $i$.
\end{lem}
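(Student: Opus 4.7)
The plan is as follows. I verify in order that (i) $\kappa$ is a well-defined $R$-linear map, (ii) $\kappa$ is $\mathbf{U}'$-linear, and (iii) $\kappa$ is injective. Well-definedness is immediate since $\{v_1^*,\dots,v_n^*\}$ is an $R$-basis of $V^*$. Writing $y_{n-1}=\sum_{w\in\mathfrak{S}_{n-1}}(-q)^{l(w)}T_w$ and $\mathbf{a}_i=(1,2,\dots,\hat i,\dots,n)$, the second equality in the display follows by induction on $l(w)$: for a reduced decomposition $w=w's_p$ one has $l(w's_p)>l(w')$ iff $w'(p)<w'(p+1)$; since $\mathbf{a}_i$ has strictly increasing distinct entries this means the positions $p,p+1$ in $\mathbf{a}_i.w'$ are in ascending order, so $T_p$ acts on $v_{\mathbf{a}_i.w'}$ as a pure place-permutation and the induction goes through. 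Injectivity follows from a leading-term argument: the summand $w=1$ contributes $(-q)^i v_{\mathbf{a}_i}$, and every other summand is of the form $v_{\mathbf{a}_i.w}$ with $\mathbf{a}_i.w$ a non-increasing permutation of $\mathbf{a}_i$. Since the multisets $\{1,\dots,n\}\setminus\{i\}$ uniquely determine $i$, the basis vector $v_{\mathbf{a}_i}$ occurs (up to sign with unit coefficient) only in $\kappa(v_i^*)$, giving $R$-linear independence of $\kappa(v_1^*),\dots,\kappa(v_n^*)$.

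For $\mathbf{U}'$-linearity I reduce to checking compatibility with the Chevalley generators $K_j^{\pm1},e_j,f_j$, which generate $\mathbf{U}$ over $\mathbb{Q}(q)$. Once $\kappa(xv_i^*)=x\kappa(v_i^*)$ holds for these, it extends to all of $\mathbf{U}_{\mathbb{Q}(q)}$, in particular to the divided powers; since both $V^*$ and $V^{\otimes n-1}$ are free $\mathbb{Z}[q,q^{-1}]$-modules preserved by $e_j^{(l)},f_j^{(l)}$, the identity descends to $\mathbf{U}'_{\mathbb{Z}[q,q^{-1}]}$ and then (by base change) to $\mathbf{U}'_R$. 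Using $(xf)(v)=f(S(x)v)$ with the formulas for $S$, a short calculation gives $K_jv_i^*=q^{\delta_{i,j+1}-\delta_{ij}}v_i^*$, $e_jv_i^*=-q^{-1}\delta_{ij}v_{j+1}^*$, and $f_jv_i^*=-q\,\delta_{i,j+1}v_j^*$. Since the $\mathbf{U}$-action commutes with the right $\mathcal{H}_{n-1}$-action on $V^{\otimes n-1}$, one has $x\kappa(v_i^*)=(-q)^i(xv_{\mathbf{a}_i})y_{n-1}$; compatibility with $K_j$ is immediate as $y_{n-1}$ preserves weight. Applying the iterated coproduct, $e_jv_{\mathbf{a}_i}$ equals $v_{\mathbf{a}_{i+1}}$ when $j=i$, is $0$ when $j=i-1$, and otherwise is a tensor $v_{\mathbf{b}}$ with an adjacent repetition $b_p=b_{p+1}$, with an analogous trichotomy for $f_j$.

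The technical core, and the step I expect to be the main obstacle, is showing that $v_{\mathbf{b}}y_{n-1}=0$ whenever $\mathbf{b}$ has $b_p=b_{p+1}$. Using the right-coset decomposition of $\mathfrak{S}_{n-1}$ modulo $\langle s_p\rangle$ with minimal-length representatives $D$, one factors $y_{n-1}=(1-qT_p)\bigl(\sum_{v\in D}(-q)^{l(v)}T_v\bigr)$; combined with $v_{\mathbf{b}}T_p=q^{-1}v_{\mathbf{b}}$ this gives $v_{\mathbf{b}}(1-qT_p)=0$ and hence $v_{\mathbf{b}}y_{n-1}=0$. With this in hand, the only surviving cases of $\kappa(xv_i^*)=x\kappa(v_i^*)$ for $x\in\{e_j,f_j\}$ are $j=i$ and $j=i-1$ respectively, and the sign identities $-q^{-1}(-q)^{i+1}=(-q)^i$ (for $e_i$) and $-q\,(-q)^{i-1}=(-q)^i$ (for $f_{i-1}$) match the two sides exactly. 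Together with the $K_j$-check this completes $\mathbf{U}'$-linearity, and combined with (iii) it yields that $\kappa$ is a $\mathbf{U}'$-monomorphism.
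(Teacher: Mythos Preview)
Your proof is correct and follows essentially the same route as the paper: compute the action of the generators $K_j,e_j,f_j$ on $v_i^*$, use that the $\mathbf U'$-action commutes with the Hecke action so that $x\kappa(v_i^*)=(-q)^i(xv_{\mathbf a_i})y_{n-1}$, and then verify the three cases for $e_j v_{\mathbf a_i}$ (and the analogous trichotomy for $f_j$). The only minor differences are in presentation: you make explicit the factorization $y_{n-1}=(1-qT_p)\sum_{v\in D}(-q)^{l(v)}T_v$ to justify $v_{\mathbf b}y_{n-1}=0$ for $\mathbf b$ with an adjacent repetition (the paper simply asserts this vanishing), and you handle the divided powers by a descent argument through $\mathbb{Q}(q)$, whereas the paper observes directly that $e_j^{(l)}v_i^*=0$ and $e_j^{(l)}\kappa(v_i^*)=0$ for $l\ge 2$, which is a little more economical.
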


\begin{proof}
  It is clear, that $\kappa$ is a monomorphism of $R$-modules. 
  By definition, $K_iv_j^*=q^{\delta_{i+1,j}-\delta_{i,j}}v_j^*$ and
  $K_i v_{(1\ldots\hat{j}\ldots n)}=
  q^{1-\delta_{i,j}}q^{\delta_{i+1,j}-1}v_{(1\ldots  \hat{j}\ldots n)}$.
  Thus $\kappa$ commutes with 
  $K_i$. 
  Now $e_iv_j^*=-\delta_{i,j}q^{-1}v_{j+1}^*$. If $j\neq i,i+1$ then 
  \begin{eqnarray*}
    e_i\kappa(v_j^*)&=&
    (-q)^je_i\sum_w(-q)^{l(w)}v_{(1\ldots i i+1\ldots \hat{j}\ldots n)}T_w\\
    &=&-(-q)^j\sum_w(-q)^{l(w)}v_{(1\ldots i i\ldots \hat{j}\ldots
      n)}T_w =0=\kappa(e_iv_j^*)
  \end{eqnarray*}
  For $j=i$ resp.~$i+1$ we get
  \begin{eqnarray*}
   e_i\kappa(v_{i+1}^*)&=&
   (-q)^{i+1}\sum_w(-q)^{l(w)}(e_iv_{(1\ldots  \widehat{i+1}\ldots n)})T_w=0\\
   e_i\kappa(v_i^*)&=&
   (-q)^i\sum_w(-q)^{l(w)}(e_iv_{(1\ldots  \hat{i}i+1\ldots n)})T_w\\
   &=&
    (-q)^{i}\sum_w(-q)^{l(w)}v_{(1\ldots  i \,\widehat{i+1}\ldots n)}T_w
    =-q^{-1}\kappa(v_{i+1}^*)
  \end{eqnarray*}
Furthermore, for $l\geq 2$ we clearly have $e_i^{(l)}v_j^*=0$ and 
$e_i^{(l)}\kappa(v_j^*)=0$. 
The argument for $f_i$ works similarly.  
\end{proof}
Lemma~\ref{lem:monomorphism} enables us to consider the mixed tensor
space $V^{\otimes r}\otimes {V^*}^{\otimes s}$
as a ${\mathbf U}'$-submodule $T^{r,s}$ of
$V^{\otimes r+(n-1)s}$ via an embedding which we will also denote by
$\kappa$.
Thus $\mathfrak{B}_{r,s}^n(q)$ acts on $T^{r,s}$. 

If we restrict the action of
an element of ${\mathbf U}'$ on $V^{\otimes r+(n-1)s}$
or equivalently of the 
$q$-Schur algebra $S_q(n,r+(n-1)s)$
to $T^{r,s}$, then we get an element of 
$\mathrm{End}_R(T^{r,s})$. Since the actions of ${\mathbf U}'$ and
$\mathfrak{B}_{r,s}^n(q)$ commute, this is also an element of 
$\mathrm{End}_{\mathfrak{B}_{r,s}^n(q)}(T^{r,s})$. 
Let $S_q(n;r,s):=\mathrm{End}_{\mathfrak{B}_{r,s}^n(q)}(V^{\otimes r}\otimes
{V^*}^{\otimes s})$, thus we have an algebra homomorphism
$\pi:S_q(n,r+(n-1)s)\to S_q(n;r,s)$ by restriction of the action to
$T^{r,s}\cong V^{\otimes r}\otimes
{V^*}^{\otimes s}$. Our aim is to show that $\pi$ is
surjective, for then each element of
$\mathrm{End}_{\mathfrak{B}_{r,s}^n(q)}(V^{\otimes r}\otimes 
{V^*}^{\otimes s})$ is given by the action of an element of ${\mathbf U}'$. 

\begin{lem}\label{lem:construction}
  Let $M$ be a free $R$-module with basis
  $\mathcal{B}=\{b_1,\ldots,b_l\}$ and let
  $U$ be a submodule of $M$ given by a set of linear equations on the
  coefficients with respect to the basis
  $\mathcal{B}$, i.~e.~there are elements $a_{ij}\in R$ such that 
  $U=\{\sum c_ib_i\in M:\sum_j a_{ij}c_j=0 \text{ for all }i\}$.
  Let $\{b_1^*,\ldots,b_l^*\}$ be the basis of $M^*=\mathrm{Hom}_R(M,R)$ dual
  to $\mathcal{B}$ and let
  $X$ be the submodule generated by all $\sum_j a_{ij}b_j^*$. Then
  $U\cong \left(M^*/X\right)^*$.
\end{lem}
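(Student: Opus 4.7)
The plan is to realize both sides as the same subset of $M^{**}$, using finite freeness of $M$ to identify $M \cong M^{**}$ canonically. First, since $\mathcal{B}$ is a \emph{finite} basis, the evaluation map $\mathrm{ev}\colon M\to M^{**}$, $m\mapsto (\varphi\mapsto \varphi(m))$, is an isomorphism; under it, $b_j$ corresponds to the element of $M^{**}$ dual to $b_j^*$, and for $m=\sum_j c_j b_j$ one has
\[
\mathrm{ev}_m\!\left(\sum_j a_{ij}b_j^*\right)=\sum_j a_{ij}c_j.
\]
So $\mathrm{ev}$ identifies $U$ with the submodule of $M^{**}$ consisting of those $\varphi$ that vanish on every generator $\sum_j a_{ij}b_j^*$ of $X$, and hence on all of $X$.

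Next I would apply the contravariant functor $\mathrm{Hom}_R(-,R)$ to the short exact sequence
\[
0\longrightarrow X\longrightarrow M^*\longrightarrow M^*/X\longrightarrow 0.
\]
Left exactness of $\mathrm{Hom}_R(-,R)$ on right-exact sequences gives the exact sequence
\[
0\longrightarrow (M^*/X)^*\longrightarrow M^{**}\longrightarrow X^*,
\]
so $(M^*/X)^*$ is precisely the submodule of $M^{**}$ consisting of those functionals whose restriction to $X$ vanishes.

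Combining the two steps, $\mathrm{ev}$ restricts to an isomorphism between $U$ and $(M^*/X)^*$, which is the claim. There is no real obstacle here since the argument is formal: the only point to be careful about is that $R$ need only be a commutative ring (no flatness or projectivity beyond what is already present), and this is handled by the fact that $M$ is finitely generated free, so $\mathrm{ev}_M$ is an isomorphism and left-exactness of $\mathrm{Hom}_R(-,R)$ suffices; we do not need $\mathrm{Hom}_R(-,R)$ to be exact, only to send the surjection $M^*\twoheadrightarrow M^*/X$ to an injection on duals.
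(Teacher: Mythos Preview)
Your argument is correct and follows essentially the same route as the paper: identify $(M^*/X)^*$ with the functionals in $M^{**}$ vanishing on $X$, use the canonical isomorphism $M\cong M^{**}$ for finite free $M$, and check on the generators $\sum_j a_{ij}b_j^*$ that this submodule corresponds exactly to $U$. The only difference is cosmetic: you phrase the first identification via left exactness of $\mathrm{Hom}_R(-,R)$ applied to $0\to X\to M^*\to M^*/X\to 0$, whereas the paper simply asserts it directly.
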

\begin{proof}
    $\left(M^*/X\right)^*$ is isomorphic to the submodule of $M^{**}$
    given by linear forms on $M^*$ that vanish on $X$. Via the natural
    isomorphism $M^{**}\cong M$, this is isomorphic to the set of
    elements of $M$ that are annihilated by $X$. An element $m=\sum_k
    c_kb_k$ is annihilated by $X$ if and only if $0=\sum_{j,k}
    a_{ij}b_j^*(c_kb_k)= \sum_k a_{ik}c_k$ for all $i$ 
    and this is true if and
    only if $m\in U$.
\end{proof}

Note that an element $\tilde\varphi\in (M^*/X)^*$ corresponds to the
element $\varphi=\sum_i \tilde\varphi(b_i^*+X)b_i$ of $U$.  In our
case $S_q(n,m)$ and $S_q(n;r,s)$ are $R$-submodules of $R$-free
algebras, namely $\mathrm{End}_R(V^{\otimes m})$ and
$\mathrm{End}_R(V^{\otimes r}\otimes {V^*}^{\otimes s})$ resp., given
by a set of linear equations, which we will determine more precisely
in Sections \ref{sec:Aqnm} and \ref{sec:Aqnrs}.

\begin{defn}
  Let $M=\mathrm{End}_R(V^{\otimes m})$ and $U=S_q(n,m)$. Then $U$ is
  defined as the algebra  of endomorphisms commuting with a
  certain set of endomorphisms and thus is given by a system of linear
  equations on the coefficients. 
  Let $A_q(n,m)=M^*/X$ as in Lemma~\ref{lem:construction}.
  Similarly let $A_q(n;r,s) =M^*/X$ with
  $M=\mathrm{End}_R(V^{\otimes r}\otimes {V^*}^{\otimes s})$ and
  $U=S_q(n;r,s)$. 
\end{defn}
By Lemma~\ref{lem:construction} we have 
$A_q(n,m)^*= S_q(n,m)$ and $A_q(n;r,s)^*=S_q(n;r,s)$.
We will proceed as follows: We will take $m = r+(n-1)s$ and define an
$R$-homomorphism $\iota:A_q(n;r,s)\to A_q(n,r+(n-1)s)$ such that
$\iota^*=\pi: S_q(n,r+(n-1)s)\to S_q(n;r,s)$.  Then we will define an
$R$-homomorphism $\phi:A_q(n,r+(n-1)s)\to A_q(n;r,s)$ such that
$\phi\circ\iota=\mathrm{id}_{A_q(n;r,s)}$ by giving suitable bases for
$A_q(n,r+(n-1)s)$ and $A_q(n;r,s)$. Dualizing this equation, we get
$\pi\circ\phi^*=\iota^*\circ\phi^*=\mathrm{id}_{S_q(n;r,s)}$, and this
shows that $\pi$ is surjective.  Actually $A_q(n,r+(n-1)s)$ and
$A_q(n;r,s)$ are coalgebras and $\iota$ is a morphism of coalgebras,
but we do not need this for our results.

\section{$A_q(n,m)$} \label{sec:Aqnm}

The description of $A_q(n,m)$ is well known, see
e.~g.~\cite{dipperdonkin}.
Let $A_q(n)$ be the free $R$-algebra on
generators $x_{ij}$ ($1\leq i,j\leq n$) subject to the relations
\begin{eqnarray*}
  x_{ik}x_{jk}&=&qx_{jk} x_{ik} \quad\text{ if } i<j\\
  x_{ki}x_{kj}&=&qx_{kj} x_{ki} \quad\text{ if } i<j\\
  x_{ij}x_{kl}&=&x_{kl}x_{ij} \quad\text{ if } i<k \text{ and } j>l\\
  x_{ij}x_{kl}&=&x_{kl}x_{ij}+(q-q^{-1})x_{il}x_{kj}\quad\text{ if } i<k 
   \text{ and }j<l. 
\end{eqnarray*}
Note that these relations define the commutative algebra in $n^2$
commuting indeterminates $x_{ij}$ in case $q=1$.
The free algebra on the generators $x_{ij}$ is obviously graded (with
all  generators in degree 1), and since the relations are
homogeneous, this induces a grading on $A_q(n)$. Then
\begin{lem}[\cite{dipperdonkin}]
  $A_q(n,m)$ is the $R$-submodule of $A_q(n)$ of elements of
  homogeneous degree $m$. 
\end{lem}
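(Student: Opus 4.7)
The plan is to build an explicit $R$-linear isomorphism $\Psi : A_q(n,m) \to A_q(n)_m$ (where $A_q(n)_m$ denotes the degree-$m$ subspace of $A_q(n)$) by matching the linear equations cutting out $S_q(n,m)\subseteq M$ to the quadratic defining relations of $A_q(n)$. The dictionary that makes this work is $x_{\mathbf{i},\mathbf{j}} \leftrightarrow x_{i_1j_1} x_{i_2j_2}\cdots x_{i_mj_m}$, reflecting the fact that $\mathrm{End}_R(V^{\otimes m})^{\ast}$ is, as an $R$-module, the $m$-fold tensor power of $\mathrm{End}_R(V)^{\ast}$, and that $A_q(n)$ is generated in degree one by the coefficient functions $x_{ij}$ of the natural module.

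Concretely, let $\{E_{\mathbf{i},\mathbf{j}} : \mathbf{i},\mathbf{j}\in I(n,m)\}$ be the matrix-unit basis of $M=\mathrm{End}_R(V^{\otimes m})$, with dual basis $\{x_{\mathbf{i},\mathbf{j}}\}$ of $M^{\ast}$, and define
\[
\tilde\Psi : M^{\ast} \longrightarrow A_q(n)_m, \qquad x_{\mathbf{i},\mathbf{j}} \longmapsto x_{i_1j_1}x_{i_2j_2}\cdots x_{i_mj_m}.
\]
This map is surjective since the images range over all length-$m$ monomials in the generators of $A_q(n)$, which span the degree-$m$ component by definition. To describe $X\subseteq M^{\ast}$, write out the condition that $\varphi = \sum c_{\mathbf{i},\mathbf{j}}E_{\mathbf{i},\mathbf{j}}$ commute with each Hecke generator $T_k$ acting on positions $k, k+1$ of $V^{\otimes m}$. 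Using the three-case formula for $T_k = S_k$ recorded above and comparing coefficients in $\varphi T_k$ and $T_k\varphi$, one reads off a finite list of linear equations in the $c_{\mathbf{i},\mathbf{j}}$; by Lemma~\ref{lem:construction}, $X$ is then spanned by the corresponding linear forms in the $x_{\mathbf{i},\mathbf{j}}$.

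The core step is to verify $\tilde\Psi(X)=0$. A generator of $X$ coming from the $T_k$-commutation condition touches only positions $k,k+1$, so its image under $\tilde\Psi$ factors as $\bigl(\prod_{\ell\neq k,k+1} x_{i_\ell j_\ell}\bigr)$ times a quadratic expression in the generators $x_{ab}$. Splitting into the possible orderings of the pairs $(i_k,i_{k+1})$ and $(j_k,j_{k+1})$, these quadratics are precisely the four defining relations of $A_q(n)$: the $q$-commutation of two $x$'s sharing a row or a column, strict commutation in the ``non-crossing'' case, and the $(q-q^{-1})$-corrected commutation in the ``crossing'' case. Hence $\tilde\Psi$ descends to a surjection $\Psi : A_q(n,m) \to A_q(n)_m$. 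To upgrade $\Psi$ to an isomorphism I compare ranks: a diamond-lemma normal form on the generator-relation presentation of $A_q(n)$ exhibits $A_q(n)_m$ as $R$-free of rank independent of $(R,q)$, while $A_q(n,m) = S_q(n,m)^{\ast}$ has the same property by base change from $\mathbb{Z}[q,q^{-1}]$, where passage to $\mathbb{Q}(q)$ and classical Schur--Weyl duality pin the common dimension down. The main obstacle is purely the bookkeeping in the case analysis; once the dictionary is installed, each Hecke-commutation relation repackages cleanly into a defining relation of $A_q(n)$, as originally observed in \cite{dipperdonkin}.
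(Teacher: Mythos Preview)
Your approach is essentially the paper's: write out the condition that $\varphi$ commute with a Hecke generator $T_k$, observe that the resulting linear equations on coefficients are exactly the quadratic defining relations of $A_q(n)$ (flanked by monomials), and conclude $M^*/X \cong A_q(n)_m$. The paper does precisely this computation for $m=2$, $k=1$ and stops.

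Where you deviate is in the final step. Having observed that the generators of $X$ map to ``precisely the four defining relations of $A_q(n)$,'' you already have $X = K$ (where $K=\ker\tilde\Psi$ is the degree-$m$ part of the defining ideal), because the degree-$m$ part of the ideal is $R$-spanned by elements of the form (monomial)$\cdot$(quadratic relation)$\cdot$(monomial), and each such element is one of your generators of $X$ coming from $T_k$ for the appropriate $k$. So $\Psi$ is an isomorphism on the nose; no rank argument is required. Your rank comparison is not only superfluous but, as written, circular: you invoke $A_q(n,m)=S_q(n,m)^*$, whereas Lemma~\ref{lem:construction} only gives $A_q(n,m)^*\cong S_q(n,m)$, and promoting this to $A_q(n,m)\cong S_q(n,m)^*$ already requires knowing $A_q(n,m)$ is free (or at least reflexive). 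Drop the last paragraph and simply note that your identification of generators gives both inclusions $X\subseteq K$ and $K\subseteq X$; that is exactly what the paper does.
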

\begin{proof}
  Since our relations of the Hecke algebra differ from those in 
  \cite{dipperdonkin} ($(T_i-q)(T_i+1)=0$ is replaced by
  $(T_i+q)(T_i-q^{-1})=0$), and thus $A_q(n,m)$ differs as well, we
  include a proof here. 

  Suppose  $\varphi$ is an endomorphism of $V^{\otimes m}$ 
  commuting with the action of
  a generator $S_i$. For convenience, we assume that $m=2$ and
  $S=S_1$.
  $\varphi$
  can be written as a linear combination of the basis elements
  $E_{(ij),(kl)}$ mapping $v_k\otimes v_l$ to $v_i\otimes v_j$, and
  all other basis elements to $0$. For the coefficient of
  $E_{(ij),(kl)}$, we write $c_{ik}c_{jl}$, so that
  $\varphi=\sum_{i,j,k,l}c_{ik}c_{jl}E_{(ij),(kl)}$. On the one hand
  we have
  \begin{align*}
    &S(\varphi(v_k\otimes v_l))=
    S\left(\sum_{i,j}c_{ik}c_{jl}v_i\otimes v_j\right)\\
    &=\sum_{i<j}c_{ik}c_{jl}v_j\otimes v_i
    +q^{-1}\sum_i c_{ik}c_{il}v_i\otimes v_i\\
    &+\sum_{i>j}c_{ik}c_{jl}(v_j\otimes v_i+(q^{-1}-q)v_i\otimes v_j)\\
    &=\sum_{i\neq j}c_{ik}c_{jl}v_j\otimes v_i
    +q^{-1}\sum_i c_{ik}c_{il}v_i\otimes v_i
    +(q^{-1}-q)\sum_{i<j}c_{jk}c_{il}v_j\otimes v_i\\
  \end{align*}

  Now, suppose that $k>l$. Then 
 \begin{align*}
    \varphi(S(v_k\otimes v_l))&=
    \varphi(v_l\otimes v_k+(q^{-1}-q)v_k\otimes v_l)\\
    &= \sum_{i,j}\left(
      c_{jl}c_{ik}+(q^{-1}-q) c_{jk}c_{il}\right)v_j\otimes v_i
  \end{align*}
  Similar formulas hold for $k=l$ and $k<l$.  
  Comparing coefficients leads to the relations given above. 
\end{proof}
$A_q(n,m)$ has a basis consisting of monomials, but it will
turn out to be more convenient for our purposes to work with a
basis of standard bideterminants (see \cite{huangzhang}). Note that
the supersymmetric quantum letterplace 
algebra in \cite{huangzhang} for
$L^-=P^-=\{1,\ldots,n\}, L^+=P^+=\emptyset$ is isomorphic 
to $A_{q^{-1}}(n)\cong A_q(n)^{\mathrm{opp}}$, and we will adjust the
results  to our situation. 

A \emph{partition} $\lambda$ of $m$ is a sequence
$\lambda=(\lambda_1,\lambda_2,\ldots, \lambda_k)$ of nonnegative
integers such that $\lambda_1\geq \lambda_2\geq \ldots \geq
\lambda_k$ and  $\sum_{i=1}^k\lambda_i=m$. Denote the set of
partitions of $m$ by $\Lambda^+(m)$. 
The \emph{Young diagram} $[\lambda]$ 
of a partition $\lambda$ is $\{(i,j)\in
\mathbb{N}\times \mathbb{N}:1\leq i\leq k,1\leq j\leq
\lambda_i\}$. It can be represented by an
array of boxes, $\lambda_1$ boxes in the first row, $\lambda_2$ boxes
in the second row, etc.

A \emph{$\lambda$-tableau} $\mathfrak{t}$ 
is a map  $f:[\lambda]\to \{1,\ldots,n\}$. A
tableau can be represented by writing  the entry $f(i,j)$ into the
$(i,j)$-th box. A tableau $\mathfrak{t}$ is called \emph{standard},
if the entries
in each row are strictly increasing from left to right,
and the entries in each column are nondecreasing downward. In the
literature, this property is also called semi-standard, and the role
of rows and
columns may be interchanged. Note that if $\mathfrak{t}$ is a standard
$\lambda$-tableau, then $\lambda_1\leq n$. 
 A pair 
$[\mathfrak{t},\mathfrak{t}']$ of $\lambda$-tableaux is
called a \emph{bitableau}. It is standard 
if both $\mathfrak{t}$ and $\mathfrak{t}'$ are
standard $\lambda$-tableaux.

Note that the next definition differs from
the definition in \cite{huangzhang} by a sign. 
 \begin{defn}
   Let  $i_1,\ldots ,i_k,j_1,\ldots,j_k$ be  elements of $\{1,\ldots,n\}$,
   For $i_1<i_2<\ldots<i_k$ 
   let the \emph{right quantum minor} be defined by
   \[
   (i_1i_2\ldots i_k|j_1j_2\ldots j_k)_r:=
   \sum_{w\in\mathfrak{S}_k}(-q)^{l(w)}
   x_{i_{w1}j_{1}}x_{i_{w2}j_{2}}\ldots x_{i_{wk}j_{k}}.
   \]
   For arbitrary  $i_1,\ldots ,i_k$, the right 
   quantum minor is then defined by the rule 
   \[ (i_1\ldots i_li_{l+1}\ldots i_k|j_1j_2\ldots j_k)_r:=
   -q^{-1}(i_1\ldots i_{l-1}i_{l+1}i_li_{l+2}
   \ldots i_k|j_1j_2\ldots j_k)_r\]
   for $i_l>i_{l+1}$. 
   Similarly, let the \emph{left quantum minor} be defined by 
   \begin{eqnarray*}
     (i_1\ldots i_k|j_1\ldots j_k)_l&:=& 
     \sum_{w\in\mathfrak{S}_k}(-q)^{l(w)}
     x_{i_1,j_{w1}}x_{i_2j_{w2}}\ldots x_{i_kj_{wk}}
     \text{ if }j_1<\ldots<j_k,\\
     (i_1\ldots i_k|j_1\ldots j_k)_l&:=&-q^{-1} 
     (i_1\ldots i_k|j_1\ldots j_{l+1}j_l\ldots j_k)_l
     \text{ if }j_l>j_{l+1}.
   \end{eqnarray*}
   Finally let  the 
   \emph{quantum determinant} be defined by
   \[{\det}_q:=(12\ldots n|12\ldots n)_r
   =(12\ldots n|12\ldots n)_l.\]
   If $[\mathfrak{t},\mathfrak{t}']$ is a bitableau,
   and $\mathfrak{t}_1,\mathfrak{t}_2,\ldots,\mathfrak{t}_k$  
   resp.~ $\mathfrak{t}_1',\mathfrak{t}_2',\ldots,\mathfrak{t}_k'$ are
   the rows of $\mathfrak{t}$ resp.~$\mathfrak{t}'$, then let
   \[
   (\mathfrak{t}|\mathfrak{t}'):=
   (\mathfrak{t}_k|\mathfrak{t}_k')_r\ldots 
   (\mathfrak{t}_2|\mathfrak{t}_2')_r
   (\mathfrak{t}_1|\mathfrak{t}_1')_r.
   \]
   $(\mathfrak{t}|\mathfrak{t}')$ is called a \emph{bideterminant}.
\end{defn}

\begin{rem}\label{rem:properties}
  We note the following properties of  quantum minors:
\begin{enumerate}
\item 
\begin{eqnarray*}
  (i_1\ldots i_k|j_1\ldots j_k)_r&=&
  -q (i_1\ldots i_k|j_1\ldots j_{l+1}j_l\ldots j_k)_r
  \text{ for }j_l>j_{l+1}\\ 
  (i_1\ldots i_k|j_1\ldots j_k)_l&=&
  -q(i_1\ldots i_{l+1}i_l\ldots i_k|j_1\ldots j_k)_l
  \text{ for }i_l>i_{l+1}.
\end{eqnarray*}

\item If
  $i_1<i_2<\ldots<i_k$ and $j_1<j_2<\ldots<j_k$, then right and left
  quantum minors coincide, and we simply write $(i_1\ldots
  i_k|j_1\ldots j_k)$. This notation thus indicates
  that the sequences of numbers
  are increasing. In general, right and left quantum minors differ by
  a power of $-q$.
\item If two $i_l$'s or $j_l$'s coincide, then
  the quantum minors vanish.
\item \label{item:quantum_determinant_center}
  The quantum determinant $\det_q$ is an
  element of the center of $A_q(n)$. 
\end{enumerate}
\end{rem}

\begin{defn}\label{defn:content}
  Let the \emph{content} of a monomial $x_{i_1j_1}\ldots x_{i_{m}j_{m}}$
  be defined as the tuple $(\alpha,\beta)=
  ((\alpha_1,\ldots,\alpha_n),(\beta_1,\ldots,\beta_n))$
  where $\alpha_i$
  is the number of indices $i_t$ such that $i_t=i$, and   $\beta_j$
  is the number of indices $j_t$ such that $j_t=j$. Note that
  $\sum\alpha_i=\sum\beta_j=m$ for each monomial of homogeneous
  degree $m$. For such a tuple $(\alpha,\beta)$,
  let $P(\alpha,\beta)$ be the subspace of $A_q(n,m)$ generated by the
  monomials of content $(\alpha,\beta)$. 
  Furthermore, let the \emph{content} of a bitableau
  $[\mathfrak{t},\mathfrak{t}']$
  be defined similarly as the
  tuple $(\alpha,\beta)$, such that $\alpha_i$ is the number of
  entries in $\mathfrak{t}$ equal to $i$ and  $\beta_j$ is the number of
  entries in $\mathfrak{t}'$ equal to $j$.
\end{defn}

\begin{thm}[\cite{huangzhang}]\label{thm:standardbasis}
  The bideterminants $(\mathfrak{t}|\mathfrak{t}')$ of the 
  standard $\lambda$-tableaux with  $\lambda$
  a partition of $m$ form a basis of $A_q(n,m)$, such that the
  bideterminants of standard $\lambda$-tableaux of content
  $(\alpha,\beta)$ form a basis of $P(\alpha,\beta)$.
\end{thm}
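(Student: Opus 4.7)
The plan is to prove both assertions of the theorem simultaneously, by first verifying that each standard bideterminant $(\mathfrak{t}|\mathfrak{t}')$ of content $(\alpha,\beta)$ lies inside $P(\alpha,\beta)$, then showing that the standard bideterminants span $P(\alpha,\beta)$, and finally invoking a rank count plus specialization to get linear independence. Content preservation is immediate from the definition: expanding a single row quantum minor $(\mathfrak{t}_k|\mathfrak{t}'_k)_r$ and then multiplying the rows together yields a sum of monomials $x_{i_1 j_1}\cdots x_{i_m j_m}$ whose row-index multiset is exactly the multiset of entries of $\mathfrak{t}$ and whose column-index multiset is that of $\mathfrak{t}'$. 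Hence it suffices to show that the standard bideterminants of content $(\alpha,\beta)$ form an $R$-basis of $P(\alpha,\beta)$.

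For the rank count I would use that the defining relations of $A_q(n)$ are of $q$-PBW type: a standard induction on degree shows that ordered monomials in the $x_{ij}$ (say, in lexicographic order on the pair $(i,j)$) form a free $R$-basis of $A_q(n,m)$, and the monomials of content $(\alpha,\beta)$ form a basis of $P(\alpha,\beta)$. The cardinality of this basis is the number of $n\times n$ matrices of nonnegative integers with row sums $\alpha$ and column sums $\beta$, which by the classical RSK correspondence equals the number of pairs of semistandard tableaux of common shape $\lambda\vdash m$ with row contents $\alpha$ and $\beta$ respectively. Thus the cardinality of the proposed basis matches $\operatorname{rank}_R P(\alpha,\beta)$, and it remains to show spanning.

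Spanning I would obtain by a straightening algorithm. Fix a total order on bitableaux, for instance reverse-lexicographic on column-reading words, and show that any nonstandard bideterminant is, modulo strictly smaller bideterminants in the same $P(\alpha,\beta)$, a $\mathbb{Z}[q,q^{-1}]$-linear combination of standard bideterminants. The key tool is a quantum Garnir identity obtained by expanding a suitable quantum minor across a shared set of column-indices of two adjacent rows of $\mathfrak{t}$; when $\mathfrak{t}$ violates the column-nondecreasing condition, this identity rewrites $(\mathfrak{t}|\mathfrak{t}')$ in terms of bitableaux with smaller reading word, and the analogous identity (using left quantum minors) handles failures of standardness in $\mathfrak{t}'$. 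A monomial is itself a trivial bideterminant with one-box rows, so straightening from arbitrary monomials is a special case.

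Linear independence then follows by specialization to $q=1$: the algebra $A_1(n,m)$ is the classical polynomial coalgebra on $n^2$ commuting variables, whose standard bideterminant basis is the Désarménien--Kung--Rota basis, known to be a basis over any ring. Since $A_q(n,m)$ is free over $\mathbb{Z}[q,q^{-1}]$ of the same rank as $A_1(n,m)$ (by the PBW count), and the straightening coefficients lie in $\mathbb{Z}[q,q^{-1}]$ and reduce correctly at $q=1$, the matrix expressing the spanning set in terms of the monomial basis is unimodular over $\mathbb{Z}[q,q^{-1}]$ and thus remains invertible after base change to any $R$. The main obstacle is formulating the quantum Garnir relation cleanly enough that the straightening is manifestly triangular with respect to the chosen order on bitableaux; once this combinatorial bookkeeping is in place, the rest of the argument is counting plus a flatness argument.
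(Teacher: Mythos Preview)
The paper does not supply its own proof of this theorem; it simply cites \cite{huangzhang} and adds the remark that the argument there, stated over a field, goes through verbatim over any commutative ring with $1$ (with the reversed order of minors explained by the passage to the opposite algebra). Your sketch is essentially a reconstruction of the Huang--Zhang approach: a straightening algorithm establishes spanning inside each $P(\alpha,\beta)$, and a cardinality count via the PBW monomial basis and RSK matches the number of standard bitableaux to the rank. So you are not deviating from the paper's route, you are filling it in.

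One comment on the logic of your final paragraph: once you know that $P(\alpha,\beta)$ is free of a given rank (from the monomial basis) and that the standard bideterminants of content $(\alpha,\beta)$ span it and have exactly that cardinality, linear independence is automatic. A surjective $R$-linear endomorphism of a finitely generated free module over a commutative ring is an isomorphism, so the spanning set is already a basis over $\mathbb{Z}[q,q^{-1}]$, and base change to any $R$ preserves this. The detour through specialization at $q=1$ and the D\'esarm\'enien--Kung--Rota basis is therefore not needed, though it is harmless as a sanity check. This is also why the paper can assert without further comment that the field proof in \cite{huangzhang} extends to rings: the straightening identities have coefficients in $\mathbb{Z}[q,q^{-1}]$, and the rest is free-module bookkeeping.
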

The proof  in \cite{huangzhang} works over a field,
but the arguments
are valid if the field is replaced by a commutative  ring with $1$.
The reversed order of the minors is due to the opposite algebra. Note
that for $i_1<i_2<\ldots<i_k$ and $j_1<j_2<\ldots<j_k$ we have 
\[ q^{\frac{k(k-1)}{2}}
(i_1i_2\ldots i_k|j_1j_2\ldots j_k)_r=
\sum_{w\in\mathfrak{S}_k}(-q)^{-l(w)}
   x_{i_{wk}j_{1}}x_{i_{w(k-1)}j_{2}}\ldots x_{i_{w1}j_{k}},
\]
which is a quantum minor of $A_{q^{-1}}(n)^{\mathrm{opp}}$. 

\begin{lem}[Laplace's expansion \cite{huangzhang}]

  \begin{enumerate}
  \item  For $j_1<j_2<\ldots <j_l<j_{l+1}<\ldots <j_k$ we have
    \begin{align*}
      &(i_1i_2\ldots i_k|j_1j_2\ldots j_k)_l\\
      &=\sum_w (-q)^{l(w)}
      (i_1\ldots i_l|j_{w1}\ldots j_{wl})_l
      (i_{l+1}\ldots i_k|j_{w(l+1)}\ldots j_{wk})_l
    \end{align*} 
    where the summation is over all $w\in
    \mathfrak{S}_k $, such that $w1<w2<\ldots
    <wl$ and $w(l+1)<w(l+2)<\ldots <wk$.
\item
  For 
  $i_1<i_2<\ldots  <i_k$ we have
  \begin{align*}
    &(i_1i_2\ldots i_k|j_1j_2\ldots j_k)_r\\
    &=\sum_w (-q)^{l(w)}
    (i_{w1}\ldots i_{wl}|j_{1}\ldots j_{l})_r
    (i_{w(l+1)}\ldots i_{wk}|j_{l+1}\ldots j_{k})_r
  \end{align*} 
  the summation again over all $w\in
  \mathfrak{S}_k $, such that $w1<w2<\ldots
  <wl$ and $w(l+1)<w(l+2)<\ldots <wk$.
\end{enumerate}
\end{lem}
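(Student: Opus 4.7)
Both identities follow from the standard coset decomposition of $\mathfrak{S}_k$ relative to the Young subgroup $\mathfrak{S}_l\times\mathfrak{S}_{k-l}$: every $\tau\in\mathfrak{S}_k$ factors uniquely as $\tau=\sigma\pi$, with $\pi\in\mathfrak{S}_l\times\mathfrak{S}_{k-l}$ acting separately on the blocks $\{1,\ldots,l\}$ and $\{l+1,\ldots,k\}$ and with $\sigma$ an $(l,k-l)$-shuffle (so $\sigma(1)<\cdots<\sigma(l)$ and $\sigma(l+1)<\cdots<\sigma(k)$); crucially $\ell(\tau)=\ell(\sigma)+\ell(\pi)$.

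For part~1, I first reduce, using the sign rule of Remark~\ref{rem:properties}(1) together with an induction on the number of row inversions, to the case in which $i_1,\ldots,i_k$ are also sorted; equivalently, one checks directly that the sum $\sum_\tau(-q)^{\ell(\tau)}x_{i_1,j_{\tau 1}}\cdots x_{i_k,j_{\tau k}}$ already satisfies the left-minor sign rule on rows. Substituting $\tau=\sigma\pi$ with $\pi=(\alpha,\beta)\in\mathfrak{S}_l\times\mathfrak{S}_{k-l}$ and using that $\pi$ preserves the two blocks, the monomial factors cleanly in left-to-right order, with no commutation invoked, as
\[
\bigl(x_{i_1,j_{\sigma\alpha(1)}}\cdots x_{i_l,j_{\sigma\alpha(l)}}\bigr)\cdot\bigl(x_{i_{l+1},j_{\sigma(l+\beta(1))}}\cdots x_{i_k,j_{\sigma(l+\beta(k-l))}}\bigr).
\]
Since $j_{\sigma(1)}<\cdots<j_{\sigma(l)}$ and $j_{\sigma(l+1)}<\cdots<j_{\sigma(k)}$ by the sortedness of the $j_\bullet$ and the shuffle property, the inner sums over $\alpha$ weighted by $(-q)^{\ell(\alpha)}$ and over $\beta$ weighted by $(-q)^{\ell(\beta)}$ reassemble, by the definition of the left minor, into the product $(i_1\ldots i_l\mid j_{\sigma 1}\ldots j_{\sigma l})_l\,(i_{l+1}\ldots i_k\mid j_{\sigma(l+1)}\ldots j_{\sigma k})_l$. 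Summing over shuffles $\sigma$ with weight $(-q)^{\ell(\sigma)}$ yields the asserted identity.

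Part~2 is completely analogous with the roles of rows and columns interchanged: the right minor sums over permutations $\tau$ of row indices with the columns fixed, so the same factorization $\tau=\sigma\pi$ splits the monomial $x_{i_{\tau 1},j_1}\cdots x_{i_{\tau k},j_k}$ at position $l$ into two right minors with row indices $i_{\sigma(1)}<\cdots<i_{\sigma(l)}$ and $i_{\sigma(l+1)}<\cdots<i_{\sigma(k)}$ (increasing by the hypothesis $i_1<\cdots<i_k$ and the shuffle property), and with columns $j_1,\ldots,j_l$ and $j_{l+1},\ldots,j_k$ respectively. The main technical point throughout is verifying that the monomial really does decouple into two blocks, which uses only that $\pi$ preserves $\{1,\ldots,l\}$ and requires no commutation relations among the $x_{ij}$; the length additivity $\ell(\tau)=\ell(\sigma)+\ell(\pi)$ then aligns the signs so that the inner double sum collapses to a product of smaller quantum minors.
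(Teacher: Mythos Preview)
Your argument is correct. The paper does not supply its own proof of this lemma---it is quoted from \cite{huangzhang}---so there is no in-paper proof to compare against. Your approach via the coset factorization $\tau=\sigma\pi$ with $\sigma$ an $(l,k-l)$-shuffle and $\pi\in\mathfrak{S}_l\times\mathfrak{S}_{k-l}$, together with the length additivity $\ell(\tau)=\ell(\sigma)+\ell(\pi)$, is the standard one and works exactly as you describe: the crucial observation is that in the defining sum for the left minor the row indices $i_1,\ldots,i_k$ sit in fixed positions $1,\ldots,k$, so the monomial already splits at position $l$ without invoking any commutation relations in $A_q(n)$, and likewise for the right minor with the roles of rows and columns exchanged.

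One small remark: the reduction you mention at the start of part~1 (``reduce to the case in which $i_1,\ldots,i_k$ are also sorted'') is not needed. The defining sum formula for the left minor is stated for \emph{arbitrary} $i_1,\ldots,i_k$ whenever $j_1<\cdots<j_k$, so you may apply your coset argument directly without first sorting the rows.
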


\section{$A_q(n;r,s)$} \label{sec:Aqnrs}
A basis of $\mathrm{End}_R(V^{\otimes r}\otimes {V^*}^{\otimes s})$ is
given by matrix units $E_{\mathbf i|\mathbf j\;\mathbf k|\mathbf l}$
such that $E_{\mathbf i|\mathbf j\;\mathbf k|\mathbf l}v_{\mathbf
  s|\mathbf t} = \delta_{\mathbf k|\mathbf l,\mathbf s|\mathbf
  t}v_{\mathbf i|\mathbf j}$.  Suppose $\varphi=\sum\limits_{\mathbf
  i,\mathbf j, \mathbf k, \mathbf l} c_{\mathbf i|\mathbf j\;\mathbf
  k|\mathbf l} E_{\mathbf i|\mathbf j\;\mathbf k|\mathbf l} \in
\mathrm{End}_R(V^{\otimes r}\otimes {V^*}^{\otimes s})$ commutes with
the action of $\mathfrak{B}_{r,s}^n(q)$ or equivalently with a set of
generators of $\mathfrak{B}_{r,s}^n(q)$.  Since coefficient spaces are
multiplicative, we can write
\[
c_{i_1k_1}c_{i_2k_2}\ldots c_{i_rk_r}
c_{j_1l_1}^*c_{j_2l_2}^*\ldots c_{j_sl_s}^*
\]
for the coefficient $c_{\mathbf i|\mathbf j\;\mathbf k|\mathbf l}$.
It is easy to see from the description of $A_q(n,m)$ that $\varphi$
commutes with the generators without non-propagating edges if and only
if the $c_{ij}$ satisfy the relations of $A_q(n)$ and the $c_{ij}^*$
satisfy the relations of $A_{q^{-1}}(n)\cong A_q(n)^{\mathrm{opp}}$.

Now suppose that $\varphi$ in addition commutes with the action of the
generator 
\[e=\raisebox{-.1cm}{\epsfbox{si.20}}.\] 
We assume that $r=s=1$ (the general case being similar) and 
$\varphi=\sum\limits_{i,j,k,l=1}^n
c_{ik}c_{jl}^*E_{i|j\;k|l}$. 
 Let $v=v_i\otimes v_j^*$ be a basis element of
 $V\otimes V^*$.  We have (the indices in the sums always run 
 from $1$ to $n$)
\begin{eqnarray*}
  \varphi(v)e&=& \sum_{s, t} c_{si}c_{tj}^*(v_s\otimes v_t^*)e
  =  \sum_{s,k}q^{2s-n-1} c_{si}c_{sj}^*(v_k\otimes v_k^*)\\
   \varphi(ve)&=& \delta_{i j}q^{2i-n-1}\sum_{k}\varphi(v_k\otimes v_k^*)
  =\delta_{i j}q^{2i-n-1}\sum_{k,s,t}
  c_{sk}c_{tk}^*v_s\otimes v_t^*
\end{eqnarray*}

Comparing coefficients, we get the following conditions: 
\begin{gather*}
  \sum_{k=1}^n c_{ik}c_{jk}^*=0 \text{ for }i\neq j\\
  \sum_{k=1}^n q^{2k}c_{ki}c_{kj}^*=0 \text{ for }i\neq j\\
  \sum_{k=1}^n q^{2k-2i}c_{ki}c_{ki}^*=\sum_{k=1}^n c_{jk}c_{jk}^*.
\end{gather*}
This, combined with Lemma~\ref{lem:construction} shows that

\begin{lem}
  \[  A_q(n;r,s)\cong (F(n,r)\otimes_R F_*(n,s))/Y\]
  where $F(n,r)$ resp.~$F_*(n,s)$  is the $R$-submodule
   of the free algebra on generators $x_{ij}$ resp.~$x_{ij}^*$
   generated by monomials of degree
   $r$ resp.~$s$ and
   $Y$ is the $R$-submodule of 
   $F(n,r)\otimes_R F_*(n,s)$ generated by elements of the form
   $h_1h_2h_3$ where $h_2$ is one of the elements
  \begin{gather} 
    x_{ik}x_{jk}-qx_{jk} x_{ik}\text{ for }i<j
    \label{equ:A(nrs):1}\\
    x_{ki}x_{kj}-qx_{kj} x_{ki}\text{ for }i<j
    \label{equ:A(nrs):2}\\
    x_{ij}x_{kl}-x_{kl}x_{ij}\text{ for }i<k,j>l
    \label{equ:A(nrs):3}\\
    x_{ij}x_{kl}-x_{kl}x_{ij}-(q-q^{-1})x_{il}x_{kj}\text{ for
    }i<k,j<l
    \label{equ:A(nrs):4}\\
    x_{ik}^*x_{jk}^*-q^{-1}x_{jk}^* x_{ik}^*\text{ for }i<j
    \label{equ:A(nrs):5}\\
    x_{ki}^*x_{kj}^*-q^{-1}x_{kj}^* x_{ki}^*\text{ for }i<j
    \label{equ:A(nrs):6}\\
    x_{ij}^*x_{kl}^*-x_{kl}^*x_{ij}^*\text{ for }i<k,j>l
    \label{equ:A(nrs):7}\\
    x_{ij}^*x_{kl}^*-x_{kl}^*x_{ij}^*
    +(q-q^{-1})x_{il}^*x_{kj}^*\text{ for }i<k,j<l
    \label{equ:A(nrs):8}\\
    \sum_{k=1}^n x_{ik}x_{jk}^* \text{ for }i\neq j
    \label{equ:A(nrs):9}\\
    \sum_{k=1}^n q^{2k}x_{ki}x_{kj}^* \text{ for }i\neq j
    \label{equ:A(nrs):10}\\
    \sum_{k=1}^n q^{2k-2i}x_{ki}x_{ki}^*-\sum_{k=1}^n x_{jk}x_{jk}^* 
    \label{equ:A(nrs):11}
  \end{gather}
  and $h_1,h_3$ are monomials of appropriate degree.
\end{lem}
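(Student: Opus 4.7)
The plan is to apply Lemma~\ref{lem:construction} with $M=\mathrm{End}_R(V^{\otimes r}\otimes {V^*}^{\otimes s})$ and $U=S_q(n;r,s)$. Under the identification of the dual basis of the matrix units $E_{\mathbf i|\mathbf j,\mathbf k|\mathbf l}$ with the tensor monomials $x_{i_1k_1}\cdots x_{i_rk_r}\otimes x^*_{j_1l_1}\cdots x^*_{j_sl_s}$ (so that $c_{ik}\leftrightarrow x_{ik}$ and $c^*_{jl}\leftrightarrow x^*_{jl}$), one obtains an $R$-module isomorphism $M^*\cong F(n,r)\otimes_R F_*(n,s)$. What is left is to show that the submodule $X\subseteq M^*$ produced by Lemma~\ref{lem:construction} equals the submodule $Y$ in the statement, i.e.\ that the listed relations are a complete set of generators for the annihilator of $S_q(n;r,s)$.

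Next, since $S_q(n;r,s)$ is by definition the commutant of $\mathfrak{B}_{r,s}^n(q)$, the defining linear equations on $\varphi\in M$ come from the requirement that $\varphi$ commute with each of the generators $S_i$ ($1\leq i< r$), $\hat S_j$ ($1\leq j<s$), and $E$. Each of these generators acts on only two adjacent tensor positions (and as the identity on the remaining ones), so commutation localizes: fixing the indices at the untouched positions reduces the problem to a ``$2$-position'' condition identical in shape to the one appearing in the analysis of $A_q(n,2)$. Dualizing, this is precisely what produces elements of the form $h_1 h_2 h_3$ in $M^*$, where $h_2$ encodes the $2$-position relation and $h_1,h_3$ are arbitrary monomials in the coefficient variables at the other positions.

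The three localized computations then proceed as follows. For commutation with $S_i$, the calculation carried out in the proof of the $A_q(n,m)$ lemma (the case $m=2$) applies verbatim and yields exactly the four families \eqref{equ:A(nrs):1}--\eqref{equ:A(nrs):4}. For commutation with $\hat S_j$ the same argument is applied to the action on ${V^*}^{\otimes 2}$; comparing the formulas for $S_i$ and $\hat S_j$ recorded earlier, the roles of the inequalities $j_j<j_{j+1}$ and $j_j>j_{j+1}$ are swapped, which amounts to sending $q$ to $q^{-1}$, so the $c^*_{ij}$ satisfy the relations of $A_{q^{-1}}(n)\cong A_q(n)^{\mathrm{opp}}$, producing \eqref{equ:A(nrs):5}--\eqref{equ:A(nrs):8}. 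For commutation with $E$, which couples position $r$ of the $V$-part with position $1$ of the $V^*$-part, the $r=s=1$ calculation displayed immediately before the lemma already yields the three families \eqref{equ:A(nrs):9}--\eqref{equ:A(nrs):11}; for general $r,s$ the same calculation applies at positions $r$ and $r+1$, with the untouched $r-1$ and $s-1$ positions contributing arbitrary monomials $h_1$ (of degree $r-1$ in the $x_{ij}$) and $h_3$ (of degree $s-1$ in the $x^*_{ij}$).

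The main (and essentially only) obstacle is the bookkeeping required to make the localization argument precise: one must verify that the linear equations coming from a global commutation relation are spanned by their localized versions, so that the resulting generators of $X$ are exactly the $h_1h_2h_3$ with $h_2$ in one of the eleven listed families. Once this is done, collecting the three families shows $X=Y$, and Lemma~\ref{lem:construction} gives the claimed isomorphism.
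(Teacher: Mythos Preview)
Your proposal is correct and follows essentially the same approach as the paper: the paper also derives the relations by reducing to the two-position computations for $S_i$ (quoting the $A_q(n,m)$ analysis), for $\hat S_j$ (noting the $c^*_{ij}$ satisfy the relations of $A_{q^{-1}}(n)\cong A_q(n)^{\mathrm{opp}}$), and for $E$ (the $r=s=1$ calculation displayed just before the lemma), and then invokes Lemma~\ref{lem:construction}. If anything, you are more explicit than the paper about the localization step and the identification $M^*\cong F(n,r)\otimes_R F_*(n,s)$; the paper summarizes this with the phrase ``since coefficient spaces are multiplicative'' and ``the general case being similar''.
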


\begin{rem}\label{rem:automorphism}
  The map given by
  $x_{ik}\mapsto q^{2k-2i}x_{ki}$ and $x_{ik}^*\mapsto x_{ki}^*$
  induces an $R$-linear automorphism of $A_q(n;r,s)$.
\end{rem}

Bideterminants can also be formed using the variables $x_{ij}^*$. In
this case let
\[
(\mathfrak{t}|\mathfrak{t}')^*:=(\mathfrak{t}_1|\mathfrak{t}_1')_r^*
(\mathfrak{t}_2|\mathfrak{t}_2')_r^*\ldots
(\mathfrak{t}_k|\mathfrak{t}_k')_r^*\]
where the quantum minors
$(i_1\ldots i_k|j_1\ldots j_k)_{r/l}^*$ are defined as above with $q$
replaced by $q^{-1}$.

\section{The map $\iota:A_q(n;r,s)\to A_q(n,r+(n-1)s)$}\label{sec:iota}
For any $1 \le i,j \le n$ let $\iota(x_{ij})=x_{ij}$ and 
\[\iota(x_{ij}^*)=
(-q)^{j-i}(12\ldots\hat{i}\ldots n|
12\ldots\hat{j}\ldots n)
\in A_q(n,n-1),\]
then there is a unique $R$-linear map
\[
\iota:F(n,r)\otimes_R F_*(n,s)\to
A_q(n,r+(n-1)s)
\] 
such that $\iota(x_{i_1j_1}\ldots x_{i_rj_r}
x_{k_1l_1}^*\ldots x_{k_sl_s}^* )=\iota(x_{i_1j_1})\ldots \iota(x_{i_rj_r})
\iota(x_{k_1l_1}^*)\ldots \iota(x_{k_sl_s}^*)$. 
\begin{lem}\label{lem:factor}
  $Y$ is contained in the kernel of $\iota$ and thus $\iota$ 
  induces an $R$-linear map 
  \[A_q(n;r,s)\to A_q(n,r+(n-1)s)\]
which we will then also denote by $\iota$. 
\end{lem}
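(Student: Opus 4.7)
The plan is to verify, case-by-case, that $\iota$ sends each of the eleven types of defining relators~(\ref{equ:A(nrs):1})--(\ref{equ:A(nrs):11}) of $Y$ to zero in $A_q(n,r+(n-1)s)$. Since $\iota$ is defined multiplicatively on ``separated'' monomials (those in which every $x$-letter precedes every $x^{*}$-letter), it will follow that $\iota(h_1h_2h_3)=\iota(h_1)\,\iota(h_2)\,\iota(h_3)=0$ for every generator of $Y$, which is exactly what the lemma asserts.

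\textbf{Relations~(\ref{equ:A(nrs):1})--(\ref{equ:A(nrs):4}).} These involve only the generators $x_{ij}$, on which $\iota$ acts as the identity, and they are precisely the defining relations of $A_q(n)$. Their images in $A_q(n,r+(n-1)s)\subseteq A_q(n)$ are therefore $0$.

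\textbf{Relations~(\ref{equ:A(nrs):5})--(\ref{equ:A(nrs):8}).} Setting $M_{ij}:=\iota(x_{ij}^{*})=(-q)^{j-i}(1\ldots\hat i\ldots n\,|\,1\ldots\hat j\ldots n)$, the task is to show that the $M_{ij}$'s satisfy the four families of commutation relations of $A_{q^{-1}}(n)\cong A_q(n)^{\mathrm{opp}}$. This is a direct calculation inside $A_q(n)$ using the known commutation identities between two maximal quantum minors sharing $n-1$ row (or $n-1$ column) indices; these identities in turn follow from expanding a suitably chosen larger minor in two different ways via the Laplace expansion at the end of Section~\ref{sec:Aqnm}.

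\textbf{Relations~(\ref{equ:A(nrs):9})--(\ref{equ:A(nrs):11}).} These are quantum analogues of the classical cofactor identity $\sum_k x_{ik}C_{jk}=\delta_{ij}\det X$. Applying $\iota$ to (\ref{equ:A(nrs):9}) yields $\sum_{k}(-q)^{k-j}x_{ik}(1\ldots\hat j\ldots n\,|\,1\ldots\hat k\ldots n)$, which I would interpret as (a scalar multiple of) the Laplace expansion of the right quantum minor with row sequence $(1,\ldots,j-1,i,j+1,\ldots,n)$ and columns $(1,\ldots,n)$, split at the row in position $j$. For $i\ne j$ this row sequence has a repeated entry, so by part~(3) of Remark~\ref{rem:properties} the whole minor vanishes. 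The same mechanism, with rows and columns swapped (and with the help of the automorphism of Remark~\ref{rem:automorphism}), settles (\ref{equ:A(nrs):10}) as the vanishing of a minor with a repeated column. For (\ref{equ:A(nrs):11}) one checks that both $\iota\bigl(\sum_k q^{2k-2i}x_{ki}x_{ki}^{*}\bigr)$ and $\iota\bigl(\sum_k x_{jk}x_{jk}^{*}\bigr)$ are Laplace expansions of $\det_q\in A_q(n,n)$, and therefore they agree.

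\textbf{The main obstacle.} The conceptual skeleton above is transparent, but the real effort lies in bookkeeping signs and powers of $-q$. The factor $(-q)^{j-i}$ built into $\iota(x_{ij}^{*})$ has to be tracked against the $(-q)^{l(w)}$'s arising in the Laplace expansion and those produced by the skew-symmetry rule $(\ldots i_l i_{l+1}\ldots|\ldots)_r=-q^{-1}(\ldots i_{l+1} i_l\ldots|\ldots)_r$ when one reorders the row sequence into increasing form; these combined scalars must match exactly the $q^{2k}$ in (\ref{equ:A(nrs):10}) and the $q^{2k-2i}$ in (\ref{equ:A(nrs):11}).
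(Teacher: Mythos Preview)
Your proposal is correct and follows essentially the same route as the paper: a case-by-case check that each relator maps to zero, with (\ref{equ:A(nrs):9})--(\ref{equ:A(nrs):11}) handled via Laplace expansion so that both $\iota\bigl(\sum_k x_{ik}x_{jk}^*\bigr)$ and $\iota\bigl(\sum_k q^{2k-2i}x_{ki}x_{kj}^*\bigr)$ equal $\delta_{ij}\det_q$. For (\ref{equ:A(nrs):5})--(\ref{equ:A(nrs):8}) the paper does not carry out the minor-commutation computation you sketch but simply cites \cite[Theorem~7.3]{goodearl}, which supplies exactly those identities among the $(n-1)\times(n-1)$ cofactor minors.

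One caution: appealing to the automorphism of Remark~\ref{rem:automorphism} to pass from (\ref{equ:A(nrs):9}) to (\ref{equ:A(nrs):10}) is circular at this point, since that automorphism is defined on $A_q(n;r,s)=\bigl(F(n,r)\otimes F_*(n,s)\bigr)/Y$, and you have not yet shown that $\iota$ factors through $Y$. The paper (and your own row/column-swap remark) instead treats (\ref{equ:A(nrs):10}) by the same direct Laplace computation as (\ref{equ:A(nrs):9}), which avoids the issue.
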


\begin{proof}
  We have to show that the generators of $Y$ lie in the kernel of
  $\iota$.  Generators of $Y$ involving the elements
  \eqref{equ:A(nrs):1} up to \eqref{equ:A(nrs):4} are obviously in the
  kernel of $\iota$.  {\cite[Theorem~7.3]{goodearl}} shows that
  generators involving elements \eqref{equ:A(nrs):5} up to
  \eqref{equ:A(nrs):8} are also in the kernel.  Laplace's Expansion
  shows that
  \begin{eqnarray*}
    \iota\left(\sum_{k=1}^n x_{ik} x_{jk}^*\right)&=&
    \sum_{k=1}^n(-q)^{(k-1)-(j-1)}x_{ik}\cdot
    (1\ldots\hat{j}\ldots n|1\ldots\hat{k}\ldots n)_l\\
    &=&(-q)^{1-j}  (i1\ldots\hat{j}\ldots n|1\ldots n)_l
    = \delta_{i,j}\cdot{\det}_q \text{ and }\\ 
    \iota\left(\sum_{k=1}^n q^{2k-2i}x_{ki} x_{kj}^*\right)&=&
    q^{-2i+j+1}\sum_{k=1}^n(-q)^{k-1}x_{ki}\cdot
    (1\ldots\hat{k}\ldots n|1\ldots\hat{j}\ldots n)_r\\
    &=&(-q)^{j-2i+1}  (1\ldots n|i1\ldots\hat{j}\ldots n)_r
    = \delta_{i,j}\cdot{\det}_q,
\end{eqnarray*}
thus the generators involving the elements \eqref{equ:A(nrs):9} 
  up to \eqref{equ:A(nrs):11} are  in the kernel of $\iota$. 
\end{proof}

Now, we have maps 
\[
\iota^*: A_q(n,r+(n-1)s)^*\to A_q(n;r,s)^*\mbox{ and }
\pi:S_q(n,r+(n-1)s)\to S_q(n;r,s).\] By definition
$A_q(n,r+(n-1)s)^*\cong S_q(n,r+(n-1)s)$ and $A_q(n;r,s)^*\cong
S_q(n;r,s)$. Under these identifications we have

\begin{lem}
  $\iota^*=\pi.$
\end{lem}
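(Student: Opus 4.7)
The plan is to check equality of the linear forms $\iota^*(\varphi)$ and $\pi(\varphi)$ on every monomial basis element of $A_q(n;r,s)$. Under the identifications of Lemma~\ref{lem:construction}, the natural pairing reads
\[
\langle x_{i_1k_1}\cdots x_{i_mk_m},\varphi\rangle=[\text{coefficient of }v_{(i_1,\ldots,i_m)}\text{ in }\varphi(v_{(k_1,\ldots,k_m)})],
\]
with the analogous statement in the mixed setting. So it suffices to fix a monomial $M=x_{i_1k_1}\cdots x_{i_rk_r}x_{j_1l_1}^*\cdots x_{j_sl_s}^*$ and $\varphi\in S_q(n,r+(n-1)s)$, and verify that $\langle\iota(M),\varphi\rangle$ equals the matrix coefficient of $\pi(\varphi)$ sending $v_{\mathbf k|\mathbf l}$ to $v_{\mathbf i|\mathbf j}$.

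For the $\pi$-side I would write $\varphi=\rho_{\mathrm{ord}}(u)$ with $u\in\mathbf U'$, so that $\pi(\varphi)$ is the restriction of $u$ along the $\mathbf U'$-linear embedding $\kappa$. Using Lemma~\ref{lem:monomorphism} together with the identity $v_{\mathbf i}T_w=v_{\mathbf i.w}$ valid for strictly increasing $\mathbf i$, I would rewrite
\[
\kappa(v_{\mathbf k|\mathbf l})=\Bigl(\prod_{t=1}^s(-q)^{l_t}\Bigr)\,v_{\mathbf K}\cdot Y,\qquad Y=y_1\cdots y_s,
\]
where $\mathbf K=(k_1,\ldots,k_r,N_{l_1},\ldots,N_{l_s})$, $N_l=(1,\ldots,\hat l,\ldots,n)$, and $y_t=\sum_{w\in\mathfrak S_{n-1}}(-q)^{l(w)}T_w$ is the Hecke antisymmetrizer acting on the $t$-th block of $n-1$ consecutive tensor factors after the initial $r$. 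Since $\varphi$ commutes with the Hecke algebra, $\varphi\kappa(v_{\mathbf k|\mathbf l})=(\prod(-q)^{l_t})\,\varphi(v_{\mathbf K})\cdot Y$. I then expand $\varphi(v_{\mathbf K})=\sum_{\mathbf I}c_{\mathbf I,\mathbf K}v_{\mathbf I}$, apply $Y$ term by term, and invoke two properties of $y_t$: the identity $T_wy_t=(-q)^{l(w)}y_t$ (an easy induction from $T_jy_t=-qy_t$, itself verified by pairing reduced words), and the quantum-exterior-algebra vanishing $v_{\mathbf I}y_t=0$ whenever the $t$-th block of $\mathbf I$ has a repeated entry. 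Together these collapse the sum to the contributions indexed by tuples $\vec w=(w_1,\ldots,w_s)\in\mathfrak S_{n-1}^s$ with $\mathbf I=\mathbf I(\mathbf i,\mathbf j,\vec w):=(i_1,\ldots,i_r,N_{j_1}.w_1,\ldots,N_{j_s}.w_s)$. Equating coefficients of the linearly independent vectors $\kappa(v_{\mathbf i|\mathbf j})=(\prod(-q)^{j_t})v_{\mathbf I(\mathbf i,\mathbf j,1,\ldots,1)}\cdot Y$ then yields
\[
\langle M,\pi(\varphi)\rangle=\prod_t(-q)^{l_t-j_t}\sum_{\vec w\in\mathfrak S_{n-1}^s}\prod_t(-q)^{l(w_t)}\,c_{\mathbf I(\mathbf i,\mathbf j,\vec w),\mathbf K}.
\]

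For the $\iota^*$-side I would substitute $\iota(x_{j_tl_t}^*)=(-q)^{l_t-j_t}(N_{j_t}|N_{l_t})$ into $M$ and expand each quantum minor by its defining signed sum over $w_t\in\mathfrak S_{n-1}$; pairing the resulting sum of monomials in $A_q(n,r+(n-1)s)$ with $\varphi$ via matrix coefficients reproduces exactly the same formula. Hence $\langle M,\iota^*(\varphi)\rangle=\langle M,\pi(\varphi)\rangle$ for every $M$, so $\iota^*=\pi$. The main technical obstacle will be the quantum-exterior-algebra vanishing $v_{\mathbf I}y_t=0$ over an arbitrary base ring: for adjacent repeats $i_a=i_{a+1}$ it falls out of the factorization $y_t=(1-qT_j)\,y'$ obtained by pairing $w$ with $s_jw$ in the definition of $y_t$, combined with $v_{\mathbf I}(1-qT_j)=v_{\mathbf I}-q\cdot q^{-1}v_{\mathbf I}=0$; the general case is the standard identification of the image of $y_t$ with the $(n-1)$-th $q$-exterior power of $V$, whose basis is indexed by strictly increasing multi-indices.
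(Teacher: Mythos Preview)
Your proof is correct. The overall strategy---compare the two maps by evaluating both on monomial matrix coefficients---is the same as the paper's, but you compute the $\pi$-side differently.

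The paper expands $\kappa(v_{\mathbf k|\mathbf l})$ directly as the signed sum $(-q)^{\sum l_t}\sum_w(-q)^{l(w)}v_{\mathbf k}\otimes(v_{\mathbf l^*}T_w)$, applies $\varphi$ termwise, and then exploits two facts: $\varphi$ preserves $T^{r,s}$, and distinct vectors $\kappa(v_{\mathbf i|\mathbf j})$ involve pairwise disjoint sets of basis vectors of $V^{\otimes r+(n-1)s}$. Hence the coefficient of $\kappa(v_{\mathbf i|\mathbf j})$ in $\varphi(v)$ can be read off as the coefficient of the single basis vector $v_{\mathbf i\mathbf j^*}$ (the $w=1$ term of $\kappa(v_{\mathbf i|\mathbf j})$). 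No antisymmetrizer properties are needed.

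You instead keep $Y=y_1\cdots y_s$ unexpanded, commute it past $\varphi$ using $\varphi\in\mathrm{End}_{\mathcal H}(V^{\otimes r+(n-1)s})$, and then evaluate $v_{\mathbf I}Y$ using $T_jy_t=-qy_t$ together with the vanishing on repeated indices. This is a legitimate alternative; it makes the identification of $\varphi\kappa(v_{\mathbf k|\mathbf l})$ as a combination of $\kappa(v_{\mathbf i|\mathbf j})$'s completely explicit, at the cost of the $q$-exterior-power vanishing that you flag as the main technical point. The paper's ``leading term'' trick bypasses that obstacle entirely, so it is shorter; your argument is more structural. A small side remark: your $\vec w$-sum sits on the $\mathbf j^*$ side (matching the right-minor expansion of $\iota(x_{jl}^*)$), whereas the paper's $w$-sum sits on the $\mathbf l^*$ side; the two agree in $A_q(n,r+(n-1)s)$ because left and right quantum minors coincide for increasing index sets.
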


\begin{proof}
  We will write 
  \begin{eqnarray*}
    x_{i_1\ldots i_l\;j_1\ldots j_l}&=&x_{i_1,j_1}\ldots x_{i_l,j_l} 
    \text{ and }\\
    x_{i_l\ldots i_1|l_1\ldots l_m\;j_l\ldots j_1|k_1\ldots
      k_m}&=&x_{i_l,j_l}\ldots x_{i_1,j_1} x_{l_1,k_1}^*\ldots
    x_{l_m,k_m}^*.
  \end{eqnarray*}
  Suppose that $\tilde\varphi\in A_q(n,r+(n-1)s)^*$. The
  corresponding element of $S_q(n,r+(n-1)s)$ is $\varphi=\sum\limits_{\mathbf
    i,\mathbf j\in \mathrm{I}(n,r+(n-1)s)} \tilde\varphi(x_{\mathbf
    i\mathbf j})E_{\mathbf i\mathbf j}$. Since
  $\iota^*(\tilde\varphi)=\tilde\varphi\circ\iota$, we have 
  \[\iota^*(\varphi)=
  \sum_{\mathbf i,\mathbf j,\mathbf k,\mathbf l}
  \tilde \varphi\circ\iota(x_{\mathbf i|\mathbf j\,\mathbf k|\mathbf l})
  E_{\mathbf i|\mathbf j\,\mathbf k|\mathbf l}\] 
  In other words: The coefficient of  
  $E_{\mathbf i|\mathbf j\,\mathbf k|\mathbf l} $ in
  $\iota^*(\varphi)$ can be computed by substituting each $x_{\mathbf
    s\mathbf t}$ in
  $\iota(x_{\mathbf i|\mathbf j\,\mathbf k|\mathbf l})$ by $\tilde\varphi(x_{\mathbf
    s\mathbf t})$.
  On the other hand, to compute the coefficient of 
  $E_{\mathbf i|\mathbf j\,\mathbf k|\mathbf l} $ in
  $\pi(\varphi)$, one has to
  consider the action of $\varphi$ on
  a basis element $v=\kappa(v_{\mathbf k|\mathbf l})$ of $T^{r,s}$. 
  For a multi index $\mathbf l\in\mathrm{I}(n,s)$ let $\mathbf
  l^*\in\mathrm{I}(n,(n-1)s)$ be defined by 
  \[\mathbf  l^*=(1\ldots\hat{l_1}\ldots n1\ldots\hat{l_2}\ldots
  n\ldots 1\ldots\hat{l_s}\ldots n ).\] 
  Then \[v=\kappa(v_{\mathbf k|\mathbf l})
  =(-q)^{l_1+l_2+\ldots +l_s}\sum_{w\in\mathfrak{S}_{n-1}^{\times
      s}}(-q)^{l(w)}
  v_{\mathbf k}\otimes(v_{\mathbf l^*}
  T_w)\]
  and thus we have
  \begin{eqnarray*}
    \varphi(v)&=&(-q)^{\sum l_k}\sum_{\mathbf s,\mathbf t,w}
    (-q)^{l(w)}\tilde\varphi(x_{\mathbf s\mathbf t})
    E_{\mathbf s\mathbf t}\left(v_{\mathbf k}\otimes (v_{\mathbf l^*}T_w)\right)\\
    &=&\sum_{\mathbf s,w}
    (-q)^{l(w)+\sum l_k}\tilde\varphi(x_{\mathbf s\;\mathbf k \mathbf l^*\!\!.w})
    v_{\mathbf s}.
  \end{eqnarray*}

  Since $\varphi$ leaves $T^{r,s}$ invariant, $\varphi(v)$ is a
  linear combination of the basis elements $\kappa(v_{\mathbf
    i|\mathbf j})$ of $T^{r,s}$. Distinct 
  $\kappa(v_{\mathbf i|\mathbf j})$ involve distinct basis vectors of
  $V^{\otimes r+(n-1)s}$. Thus if $\varphi(v)=
  \sum_{\mathbf i|\mathbf j} 
  \lambda_{\mathbf i|\mathbf j}\kappa(v_{\mathbf i|\mathbf j})=
  \sum_{\mathbf i|\mathbf j,w} 
  \lambda_{\mathbf i|\mathbf j}(-q)^{l(w)+j_1+\ldots+j_s}
  v_{\mathbf i\mathbf j^*.w}$  then
  $(-q)^{\sum j_k}\lambda_{\mathbf i|\mathbf j}$
  is equal to the coefficient of
  $v_{\mathbf i \mathbf j^*}$
  when $\varphi(v)$ is written as a linear
  combination of basis vectors of $V^{\otimes r+(n-1)s}$.
  The coefficient of 
  $v_{\mathbf i \mathbf j^*}$ in $\varphi(v)$ is, by the formula above, 
  \[(-q)^{\sum l_k}
  \sum_w(-q)^{l(w)}\tilde
  \varphi(x_{\mathbf i \mathbf j^*\;\mathbf k \mathbf l^*\!\!.w}).\]
  Thus 
  \[
  \lambda_{\mathbf i|\mathbf j}=
  (-q)^{\sum l_k-j_k}
  \sum_w(-q)^{l(w)}\tilde
  \varphi(x_{\mathbf i \mathbf j^*\;\mathbf k \mathbf l^*\!\!.w}) 
  =\tilde\varphi\circ\iota 
  (x_{\mathbf i|\mathbf j\;\mathbf k|\mathbf l}).
  \]
  But
  $\lambda_{\mathbf i|\mathbf j}$ is also the coefficient of
  $E_{\mathbf i|\mathbf j\;\mathbf k|\mathbf l }$ in $\pi(\varphi)$
  which shows the result.
\end{proof}

\begin{thm}[Jacobi's Ratio Theorem]\label{thm:jacobi}
  Suppose $n\geq l\geq 0$, 
  and $i_1<i_2<\ldots <i_l$ and $j_1<j_2<\ldots
  <j_l$. 
  Let $i_1'<i_2'<\ldots <i_{n-l}'$ and $j_1'<j_2'<\ldots <j_{n-l}'$
  be the unique numbers such that $\{1,\ldots,n\}=
  \{i_1,\ldots,i_l,i_1',\ldots,i_{n-l}'\}=
  \{j_1,\ldots,j_l,j_1',\ldots,j_{n-l}'\}$. Then 
  \[
  \iota\left((i_1\ldots i_l|j_1\ldots j_l)^*\right)
  =(-q)^{\sum_{t=1}^l (j_t-i_t)}\mathrm{det}_q^{l-1}
  (i_1' \ldots i_{n-l}'|j_1'\ldots j_{n-l}').
  \]
\end{thm}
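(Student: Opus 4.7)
The plan is to prove the identity by induction on $l$, emulating the classical proof of Jacobi's ratio theorem in the quantum setting. First I would unpack the starred minor $(i_1\ldots i_l|j_1\ldots j_l)_r^*$ as a signed sum of monomials in the $x_{ij}^*$, apply $\iota$ term by term, and observe that since the multiset $\{i_{w1},\ldots,i_{wl}\}$ equals $\{i_1,\ldots,i_l\}$ for every $w\in\mathfrak{S}_l$, the scalar $(-q)^{\sum_t (j_t-i_t)}$ factors cleanly out of every summand. This reduces the theorem to the identity
\[
\sum_{w\in \mathfrak{S}_l}(-q^{-1})^{l(w)}\prod_{t=1}^l
(1\ldots\widehat{i_{wt}}\ldots n\mid 1\ldots\widehat{j_t}\ldots n)
= {\det}_q^{\,l-1}\,(i_1'\ldots i_{n-l}'|j_1'\ldots j_{n-l}')
\]
in $A_q(n,(n-1)l)$, which the induction will establish.

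The base case $l=1$ is immediate: the sum collapses to the single $(n-1)$-minor $(1\ldots\widehat{i_1}\ldots n|1\ldots\widehat{j_1}\ldots n)$, which is exactly the complementary minor on the right, and $\det_q^{\,0}=1$. For the inductive step I would Laplace-expand the starred $l$-minor along its last column to write it as a signed sum $\sum_{k=1}^l \alpha_k\,(i_1\ldots\widehat{i_k}\ldots i_l|j_1\ldots j_{l-1})_r^*\cdot x_{i_kj_l}^*$, with coefficients $\alpha_k$ dictated by the quantum antisymmetry of Remark~\ref{rem:properties}(1) (applied with $q$ replaced by $q^{-1}$). Applying $\iota$ termwise, the inductive hypothesis rewrites each starred $(l-1)$-minor as $\det_q^{\,l-2}$ times the $(n-l+1)$-minor complementary to $\{i_1,\ldots,\widehat{i_k},\ldots,i_l\}$ and $\{j_1,\ldots,j_{l-1}\}$. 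After pulling the central element $\det_q^{\,l-2}$ to the front by Remark~\ref{rem:properties}(4), what remains is a signed sum over $k$ of products of that $(n-l+1)$-minor with the $(n-1)$-minor $(1\ldots\widehat{i_k}\ldots n|1\ldots\widehat{j_l}\ldots n)$ coming from $\iota(x_{i_kj_l}^*)$.

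The final move is to recognise this signed sum as a Laplace expansion of $\det_q\cdot(i_1'\ldots i_{n-l}'|j_1'\ldots j_{n-l}')$ run in reverse: the $(n-l+1)$-minor splits, via antisymmetry, into the singleton $\{i_k\}$ together with the sorted complement $\{i_1',\ldots,i_{n-l}'\}$, and combining this singleton with the $(n-1)$-minor supplies the missing row needed to reassemble $\det_q$. Invoking the Laplace identity stated at the end of Section~\ref{sec:Aqnm} produces the remaining factor of $\det_q$ and yields the required complementary $(n-l)$-minor, completing the induction. I expect the main obstacle to be purely combinatorial: the $(-q^{-1})^{l(w)}$ contributed by the starred minor must be reconciled with the $(-q)^{j-i}$ factors coming from $\iota$, and the repeated antisymmetry reorderings required to match the sorted complementary indices must conspire so that the prefactor $(-q)^{\sum_t(j_t-i_t)}$ reappears unchanged on the right-hand side and no spurious $q$-powers survive.
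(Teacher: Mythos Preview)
Your plan is essentially the paper's proof: induction on $l$, Laplace-expand the starred minor along one column, apply the inductive hypothesis to the $(l-1)$-factor, use centrality of $\det_q$, and then recombine via Laplace to produce the extra $\det_q$. The only cosmetic difference is that the paper expands along the \emph{first} column $j_1$ (so the $(n-1)$-minor from $\iota(x_{i_kj_1}^*)$ sits to the left of the $(n-l+1)$-minor), whereas you expand along $j_l$; either works.

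One point is understated in your description of the ``final move.'' The phrase ``the $(n-l+1)$-minor splits, via antisymmetry, into the singleton $\{i_k\}$ together with the sorted complement'' is not literally true: a quantum minor does not factor. What actually happens (and what the paper does explicitly) is a two-step Laplace maneuver. First, the outer sum over $k\in\{1,\ldots,l\}$ must be extended to a sum over \emph{all} of $\{1,\ldots,n\}$ by observing that when the extra row index lies in $\{i_1',\ldots,i_{n-l}'\}$ the $(n-l+1)$-minor acquires a repeated row and vanishes. Only after this extension does the sum have the shape of a Laplace expansion that can be collapsed. Second, one Laplace-expands the $(n-l+1)$-minor along the row containing $i_k$, producing a further inner sum over a column index; recombining that single $x$-variable with the adjacent $(n-1)$-minor via Laplace yields an $n\times n$ minor which is nonzero only when the column index equals the missing $j$, at which point it becomes $\det_q$. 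Your anticipation that the obstacle is ``purely combinatorial'' is accurate, but be aware that the bookkeeping involves this extend-then-collapse trick rather than a direct recognition of a single Laplace identity.
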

\begin{proof}
  We argue by induction on $l$. 
  Note that for $l=0$, $\mathrm{det}_q^{l-1}=\mathrm{det}_q^{-1}$ is not
  an element of $A_q(n)$. However  $(i_1' \ldots i_{n-l}'|j_1'\ldots
  j_{n-l}')$ turns out to be $\det_q$, thus the right hand side of the
  formula is $\mathrm{det}_q^{-1}\det_q=1=\iota(1)$. In this sense the
  formula is  valid for $l=0$.   
  
  For $l=1$ the theorem is true by the definition of
  $\iota(x_{ij}^*)$. Now assume
  the theorem is true for $l-1$. Apply Laplace's expansion and use
  induction to get 
  \begin{align*}
    \iota&\left((i_1\ldots i_l|j_1\ldots j_l)^*\right)
    =\iota\left(\sum_{k=1}^l(-q)^{-(k-1)}x_{i_kj_1}^* 
      (i_1\ldots\hat{i_k}\ldots i_l|j_2\ldots\ldots j_l)^*
    \right)\\
    &= \sum_{k=1}^l(-q)^{1-k}(-q)^{j_1-i_k}
    (1\ldots\hat{i_k}\ldots n|1\ldots\hat{j_1}\ldots n)\cdot
    (-q)^{\sum\limits_{t\neq1}j_t-\sum\limits_{t\neq k}i_t} 
    \mathrm{det}_q^{l-2}\\
    &\cdot(1\ldots\hat{i_1}\ldots\hat{i_2}\ldots\ldots\hat{i_{k-1}}
    \ldots\hat{i_{k+1}}\ldots\ldots\hat{i_l}\ldots n|
    1\ldots\hat{j_2}\ldots\hat{j_3}\ldots\ldots\hat{j_l}\ldots n)
  \end{align*}
  We claim that this is equal to 
  \begin{align}
    (-q)^{\sum_{t=1}^l (j_t-i_t)}& \mathrm{det}_q^{l-2}
    \sum_w(-q)^{l(w)+1-n}
    (w1w2\ldots w(n-1)|1\ldots\hat{j_1}\ldots n)\notag\\
    & \cdot(wn\,1\ldots\hat{i_1}\ldots\ldots\hat{i_l}\ldots n|
    1\ldots\hat{j_2}\ldots\ldots\hat{j_l}\ldots n)_l\label{term:expansion}
  \end{align}
  where the summation is over all $w\in \mathfrak{S}_n$ such that
  $w1<w2<\ldots <w(n-1)$. If $wn$ is not one of the $i_k$'s, then the
  summand in (\ref{term:expansion}) vanishes, since $wn$ appears 
  twice in the row on the left
  side of the second minor. Thus the summation is over all $w$
  as above with $wn=i_k$ for some $k$. Note that $l(w)=n-i_k$ and 
  \[(i_k1\ldots\hat{i_1}\ldots\ldots\hat{i_l}\ldots n|\mathfrak{t})_l
  =(-q)^{i_k-k}(1\ldots\hat{i_1}\ldots\hat{i_{k-1}}
  \ldots\hat{i_{k+1}}\ldots\hat{i_l}\ldots n|
  \mathfrak{t}),
  \]
  the claim follows. 
  Again apply Laplace's expansion to the second minor in
  (\ref{term:expansion}) to get
  \begin{gather*}
    (wn\,1\ldots\hat{i_1}\ldots\ldots\hat{i_l}\ldots n|
    1\ldots\hat{j_2}\ldots\ldots\hat{j_l}\ldots n)_l\\
    =\sum_v(-q)^{l(v)}x_{wn\;v1} 
    (1\ldots\hat{i_1}\ldots\ldots\hat{i_l}\ldots n|
    v2v3\ldots\hat{vj_2}\ldots\ldots\hat{vj_l}\ldots vn),
  \end{gather*}
  the summation being over all $v\in
  \mathfrak{S}_{\{1,\ldots,\hat{j_2},\ldots,\hat{j_l},\ldots,n\}}$ with
  $v2<v3<\ldots <vn$. 
  After substituting this term in (\ref{term:expansion}), one can again
  apply Laplace's expansion, to get that
  (\ref{term:expansion}) is equal to 
  \begin{align}
    (-q)^{\sum(j_t-i_t)}& \mathrm{det}_q^{l-2}
    \sum_v(-q)^{l(v)+1-n} 
    (12\ldots n|1\ldots\hat{j_1}\ldots n\,v1)_r\notag\\
    & \cdot 
    (1\ldots\hat{i_1}\ldots\ldots\hat{i_l}\ldots n|
    v2v3\ldots\hat{vj_2}\ldots\ldots\hat{vj_l}\ldots vn)
    \label{term:expansion2}
  \end{align}
  The only summand in (\ref{term:expansion2}) that does not vanish, is
  the term for $v1=j_1$ with $l(v)=j_1-1$.
  Thus (\ref{term:expansion2}) is equal to 
  \begin{align*}
    (-q&)^{\sum(j_t-i_t)}  \mathrm{det}_q^{l-2}
    (-q)^{j_1-n} 
    (12\ldots n|1\ldots\hat{j_1}\ldots nj_1)_r
    \cdot (i_1'\ldots i_{n-l}'|j_1'\ldots j_{n-l}')\\
    &=(-q)^{\sum_{t=1}^l (j_t-i_t)}\mathrm{det}_q^{l-1}
    (i_1'\ldots i_{n-l}'|j_1'\ldots j_{n-l}').
  \end{align*}
\end{proof}

\section{A basis for $A_q(n;r,s)$}\label{section:basis}
Theorem~\ref{thm:jacobi} enables us to construct elements of
$A_q(n;r,s)$ that are mapped to standard bideterminants under $\iota$.
First, we will introduce the notion of rational tableaux, although we
will slightly differ from the definition of rational tableaux in
\cite{stembridge}. Recall that $\Lambda^+(k)$ is the set of partitions
of $k$. 
\begin{defn}\label{defn:rationaltableaux}
  Fix $0\leq k\leq \mathrm{min}(r,s)$. Let  
  $\rho\in \Lambda^+(r-k)$ and $\sigma\in
  \Lambda^+(s-k)$ with $\rho_1+\sigma_1\leq n$.
  A \emph{rational $(\rho,\sigma)$-tableau} is a pair
  $(\mathfrak{r},\mathfrak{s})$ with $\mathfrak{r}$
  a  $\rho$-tableau  and $\mathfrak{s}$  a
  $\sigma$-tableau.
  
  Let $\mathrm{first}_i(\mathfrak{r},\mathfrak{s})$ be the number 
  of entries of the first row of $\mathfrak{r}$ which are $\leq i$
  plus the number 
  of entries of the first row of $\mathfrak{s}$ which are $\leq i$. 
  A rational tableau is called \emph{standard} if 
  $\mathfrak{r}$ and $\mathfrak{s}$ are standard tableaux
  and the following condition  holds:
  \begin{equation}
    \mathrm{first}_i(\mathfrak{r},\mathfrak{s})\leq i 
    \text{ for all }i=1,\ldots,n
    \label{equ:condition}
  \end{equation}

  A pair $[(\mathfrak{r},\mathfrak{s}),
  (\mathfrak{r}',\mathfrak{s}')]$ of rational
  $(\rho,\sigma)$-tableaux is
  called a \emph{ra\-tio\-nal bi\-ta\-bleau},
  and it is called a
  standard rational bitableau if both 
  $(\mathfrak{r},\mathfrak{s})$ and 
  $(\mathfrak{r}',\mathfrak{s}')$ are
  standard rational tableaux. 
\end{defn}

\begin{rem}
  In \cite{stembridge}, condition~\eqref{equ:condition} is already
  part of the definition of rational tableaux.  The condition
  $\rho_1+\sigma_1\leq n$ is equivalent to
  condition~\eqref{equ:condition} for $i=n$. The reason for the
  difference will be apparent in the proof of the next lemma. 
\end{rem}

\begin{lem}\label{lem:bijection}
  There is a bijection between the set consisting of all standard
  rational $(\rho,\sigma)$-tableaux for $\rho\in \Lambda^+(r-k)$,
  $\sigma\in \Lambda^+(s-k)$, as $k$ runs from 0 to $\min(r,s)$ and
  the set of all standard $\lambda$-tableaux for
  $\lambda\in\Lambda^+(r+(n-1)s)$ satisfying $\sum_{i=1}^s\lambda_i
  \geq (n-1)s$.
\end{lem}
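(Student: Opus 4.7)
The plan is to construct an explicit bijection $\Phi$ by splitting a standard $\lambda$-tableau horizontally after row $s$ and then column-complementing the top portion in $\{1,\ldots,n\}$. Concretely, given a standard $\mathfrak{t}$ of shape $\lambda\in\Lambda^+(r+(n-1)s)$ with $\sum_{i=1}^s\lambda_i\ge(n-1)s$, define $\mathfrak{r}$ to be the sub-tableau consisting of rows $s+1,s+2,\ldots$ (of shape $\rho=(\lambda_{s+1},\lambda_{s+2},\ldots)$), and define $\mathfrak{s}$ by letting its $k$-th row be the set-complement in $\{1,\ldots,n\}$ of the $k$-th row from the bottom among rows $1,\ldots,s$ of $\mathfrak{t}$. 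So $\mathfrak{s}$ has shape $\sigma=(n-\lambda_s,n-\lambda_{s-1},\ldots,n-\lambda_1)$. Setting $k=r-|\rho|$, I would first check the numerical data: $|\rho|=r-k\le r$ and $|\sigma|=ns-\sum_{i=1}^s\lambda_i=s-k\le s$ force $0\le k\le\min(r,s)$, and $\rho_1+\sigma_1=\lambda_{s+1}+(n-\lambda_s)\le n$.

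Next I would verify standardness of $(\mathfrak{r},\mathfrak{s})$. Standardness of $\mathfrak{r}$ is inherited from $\mathfrak{t}$. For $\mathfrak{s}$, the rows are set-complements and hence strictly increasing automatically; the weakly-increasing-down-columns property requires an auxiliary set-complement lemma, which I would state first: if $A,B\subseteq\{1,\ldots,n\}$ are written as increasing sequences with $b_j\le a_j$ for every $j$ in the common range and $|B|\ge|A|$, then writing $c,d$ for the increasing sequences of $\{1,\ldots,n\}\setminus A$, $\{1,\ldots,n\}\setminus B$, one has $c_j\le d_j$ in the common range. This is proved by observing that the rank functions satisfy $r_A(x)\le r_B(x)$, and $c_j=\min\{x:x-r_A(x)\ge j\}$. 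Applying this row-by-row to $\mathfrak{t}$ (where $t_{k,j}\le t_{k+1,j}$) and using the reversal of row order in $\mathfrak{s}$ yields the required column inequalities. For condition \eqref{equ:condition}, let $\alpha$ be the number of entries $\le i$ in row $s+1$ of $\mathfrak{t}$ and $\beta$ the number in row $s$. Standardness of $\mathfrak{t}$ gives $\alpha\le\beta$ (those $\alpha$ entries occupy the leftmost $\alpha$ columns and each sits below an entry of row $s$ that is itself $\le i$). Then $\mathrm{first}_i(\mathfrak{r},\mathfrak{s})=\alpha+(i-\beta)\le i$.

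For the inverse $\Psi$, given a standard rational $(\rho,\sigma)$-tableau $(\mathfrak{r},\mathfrak{s})$, I would stack the column-complements of the rows of $\mathfrak{s}$ (in reversed order) on top of $\mathfrak{r}$ to produce $\mathfrak{t}$. The shape computation runs in reverse: $\lambda_i=n-\sigma_{s-i+1}$ for $i\le s$ and $\lambda_i=\rho_{i-s}$ for $i>s$; this is a partition precisely because $\rho_1+\sigma_1\le n$, and $\sum_{i=1}^s\lambda_i=ns-|\sigma|=(n-1)s+k\ge(n-1)s$. Standardness within the top $s$ rows and within the bottom rows follows by the reverse of the arguments above (using the complement lemma once more). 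The crucial step is the transition between rows $s$ and $s+1$: one needs the $j$-th entry of row $s$ of $\mathfrak{t}$ to be $\le$ the $j$-th entry of row $s+1$ for $j\le\rho_1$. Letting $p(i)$ and $q(i)$ denote the numbers of entries $\le i$ in the first rows of $\mathfrak{r}$ and $\mathfrak{s}$ respectively, this inequality is equivalent to $p(i)\le i-q(i)$ for all $i$, which is exactly \eqref{equ:condition}. Finally $\Psi\circ\Phi$ and $\Phi\circ\Psi$ are identities because double set-complement in $\{1,\ldots,n\}$ is the identity.

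The main obstacle is the bookkeeping in the complement lemma and in showing that the column-standardness of $\mathfrak{s}$ translates correctly to the column-standardness of the top $s$ rows of $\mathfrak{t}$ (and vice versa), since this involves a row-reversal combined with set-complementation. Everything else is elementary manipulation of the partition data and a direct dictionary between \eqref{equ:condition} and the standardness condition at the boundary row.
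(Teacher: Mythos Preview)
Your proposal is correct and is essentially the same bijection the paper uses: the paper describes it geometrically (rotate $\mathfrak{s}$ by $180^\circ$, place it in the lower-right corner of an $s\times n$ rectangle, fill the remaining boxes with the row-complements, and append $\mathfrak{r}$ below), but unwinding this gives precisely your row-complement-plus-reversal map $\Phi$. The only real difference is in the verification of column-standardness: the paper observes directly that a tableau has nondecreasing columns if and only if, for every $i$ and every consecutive pair of rows, the number of entries $\le i$ weakly decreases from one row to the next, and then notes that complementation in $\{1,\ldots,n\}$ sends this count to $i$ minus itself, so the inequality survives the combined complementation and row-reversal; your complement lemma is exactly this observation recast in terms of the sequences $c_j,d_j$.
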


\begin{proof}
  Given a rational $(\rho,\sigma)$-tableau 
  $(\mathfrak{r},\mathfrak{s})$ we construct a
  $\lambda$-tableau $\mathfrak{t}$ as follows: 
  Draw a rectangular diagram with $s$ rows and $n$ columns. Rotate the
  tableau $\mathfrak{s}$ by 180 degrees and place 
  it in the bottom right corner
  of the  rectangle. Place the tableau $\mathfrak{r}$
  on the left side
  below the
  rectangle. Fill the empty boxes of the rectangle with numbers, such
  that in each row, the entries that do not appear in 
  $\mathfrak{t}$ appear in
  the empty boxes in increasing order. Let $\mathfrak{t}$ be the tableau
  consisting of the formerly empty boxes
  and the boxes of $\mathfrak{r}$.  We
  illustrate this procedure with an example. Let
  $n=5,r=4,s=5,k=1$
  and let \Yvcentermath1
  \[ 
  (\mathfrak{r},\mathfrak{s})=
  \left(\young(13,2),\young(34,35)
  \right). 
  \]
 Then \Yvcentermath1
  \[ 
  (\mathfrak{r},\mathfrak{s})
  \leadsto\young(\hfill\hfill\hfill\hfill\hfill 
  ,\hfill\hfill\hfill\hfill\hfill 
  ,\hfill\hfill\hfill\hfill\hfill
  ,\hfill\hfill\hfill 53,\hfill\hfill\hfill 43,13,2)
  \leadsto
  \young(12345,12345,12345,12453,12543,13,2)
  \leadsto \mathfrak{t}= 
  \young(12345,12345,12345,124,125,13,2)
  \]
  It is now easy to give an inverse: Just draw the rectangle into the
  tableau $\mathfrak{t}$, fill the empty boxes of the rectangle in a
  similar way as before, rotate these back to obtain
  $\mathfrak{s}$. $\mathfrak{r}$ is the part of the tableau
  $\mathfrak{t}$, that lies outside the rectangle.  We have to show,
  that these bijections provide standard tableaux of the right shape.

  Suppose $(\mathfrak{r},\mathfrak{s})$ 
  is a rational $(\rho,\sigma)$-tableau,
  then $\mathfrak{t}$ is a $\lambda$-tableau,
  with $\lambda_i=n-\sigma_{s+1-i}$
  for $i\leq s$ and $\lambda_i=\rho_{i-s}$ for $i>s$. Thus
  $\lambda_i\geq \lambda_{i+1}$ for $i<s$ is equivalent to 
  $\sigma_{s+1-i}\leq \sigma_{s-i}$, and for $i>s$ it is equivalent to
  $\rho_{i-s}\geq \rho_{i+1-s}$. Now  $\rho_1+\sigma_1=
  \lambda_{s+1}-(\lambda_{s}-n)$. This shows that $\lambda$ is a
  partition if and only if $\rho$ and $\sigma$ are partitions with
  $\rho_1+\sigma_1\leq n$.
  We still have to show that 
  $(\mathfrak{r},\mathfrak{s})$ is standard if and only
  if $\mathfrak{t}$ is standard. 

  By definition, all standard tableaux have increasing
  rows. A tableau has nondecreasing columns if and only if for all
  $i=1,\ldots, n$ and all rows (except for the last row)
  the number of  entries $\leq i$ in this row is greater or equal than 
  the number of  entries $\leq i$ in the next row. Now, it follows
  from the construction that $\mathfrak{t}$ 
  has nondecreasing columns inside the
  rectangle if and only if $\mathfrak{s}$ has  nondecreasing columns,
  $\mathfrak{t}$ has nondecreasing columns outside the
  rectangle if and only if $\mathfrak{r}$
  has  nondecreasing columns, and the
  columns in $\mathfrak{t}$ do not decrease
  from row $s$ to row $s+1$ if and only
  if condition~\eqref{equ:condition} holds. 
\end{proof}

\begin{defn}
  For $ k\geq 1$ let $\mathfrak{det}_q^{(k)}
  \in A_q(n;k,k)$ be recursively defined by
  $\mathfrak{det}_q^{(1)}:=\sum_{l=1}^n x_{1l}x_{1l}^*$ and
  $\mathfrak{det}_q^{(k)}:=
  \sum_{l=1}^n x_{1l}\mathfrak{det}_q^{(k-1)}x_{1l}^* $ for $k>1$.

  Let a (rational) bideterminant 
  $\left((\mathfrak{r},\mathfrak{s})|
    (\mathfrak{r}',\mathfrak{s}')\right)\in
  A_q(n;r,s)$ be defined by 
  \[\left((\mathfrak{r},\mathfrak{s})|
    (\mathfrak{r}',\mathfrak{s}')\right):=
  (\mathfrak{r}|\mathfrak{r}')\;
  \mathfrak{det}_q^{(k)}\;(\mathfrak{s}|\mathfrak{s}')^*\]
  whenever $[(\mathfrak{r},\mathfrak{s}),
  (\mathfrak{r}',\mathfrak{s}')]$ is a 
  rational  $(\rho,\sigma)$-bitableau
  such that $\rho\in \Lambda^+(r-k)$, $\sigma\in
  \Lambda^+(s-k)$, for some $k = 0, 1, \dots, \min(r,s)$.
\end{defn}

Note that the proof of Lemma~\ref{lem:factor} and
Remark~\ref{rem:properties}\eqref{item:quantum_determinant_center} show that
$\iota(\mathfrak{det}_q^{(k)})=\mathrm{det}_q^k$. 
Furthermore, if $\rho_1$ or $\sigma_1>n$, then the bideterminant of a
$(\rho,\sigma)$-bitableau vanishes. 
As a direct  consequence of Theorem~\ref{thm:jacobi} we get
\begin{lem}
  Let $(\mathfrak{r},\mathfrak{s})$ and 
  $(\mathfrak{r}',\mathfrak{s}')$ be 
  two standard rational tableaux, and let $\mathfrak{t}$
  and $\mathfrak{t}'$ be the (standard) tableaux obtained 
  from the correspondence of
  Lemma~\ref{lem:bijection}. Then we have
  \[
  \iota((\mathfrak{r},\mathfrak{s})|(\mathfrak{r}',\mathfrak{s}'))
  =(-q)^{c(\mathfrak{t},\mathfrak{t}')}
  (\mathfrak{t}|\mathfrak{t}')
  \]
  for some integer ${c(\mathfrak{t},\mathfrak{t}')}$.
  In particular, the bideterminants of standard rational bitableaux
  are linearly independent.  
\end{lem}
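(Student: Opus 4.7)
The plan is to apply $\iota$ to the defining expression for the rational bideterminant and use Jacobi's ratio theorem row-by-row, then to match the outcome with $(\mathfrak{t}|\mathfrak{t}')$ via the explicit correspondence of Lemma~\ref{lem:bijection}.

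I would begin with
\[
\iota\bigl((\mathfrak{r},\mathfrak{s})\,|\,(\mathfrak{r}',\mathfrak{s}')\bigr)
=\iota\bigl((\mathfrak{r}|\mathfrak{r}')\bigr)\cdot\iota\bigl(\mathfrak{det}_q^{(k)}\bigr)\cdot\iota\bigl((\mathfrak{s}|\mathfrak{s}')^{*}\bigr).
\]
Since $\iota$ fixes each $x_{ij}$, the first factor equals $(\mathfrak{r}|\mathfrak{r}')$, and by the identity $\iota(\mathfrak{det}_q^{(k)})=\det_q^{k}$ recorded just after Lemma~\ref{lem:factor} the second factor equals $\det_q^{k}$. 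For the third factor, $(\mathfrak{s}|\mathfrak{s}')^{*}$ is a product of row $*$-minors; Theorem~\ref{thm:jacobi} applied to each row $i$ converts it into a power of $-q$ times $\det_q^{\sigma_i-1}$ times the complementary minor $(\mathfrak{s}_i^{c}\,|\,{\mathfrak{s}'_i}^{c})$, where the superscript $c$ denotes the strictly increasing complement in $\{1,\ldots,n\}$.

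Using the centrality of $\det_q$ (Remark~\ref{rem:properties}), all the $\det_q$ factors may then be collected. With $t$ the number of nonzero parts of $\sigma$, the total $\det_q$-exponent is $k+\sum_{i=1}^{t}(\sigma_i-1)=s-t$. For each of the $s-t$ empty rows of $\sigma$ the ``complementary minor'' is literally $(1\ldots n\,|\,1\ldots n)=\det_q$, so these extra factors of $\det_q$ can be reinterpreted as complementary minors for the empty rows. This yields
\[
\iota\bigl((\mathfrak{r},\mathfrak{s})\,|\,(\mathfrak{r}',\mathfrak{s}')\bigr)
=(-q)^{c}\,(\mathfrak{r}|\mathfrak{r}')\,\prod_{i=1}^{s}(\mathfrak{s}_i^{c}\,|\,{\mathfrak{s}'_i}^{c})
\]
for some integer $c=c(\mathfrak{t},\mathfrak{t}')$.

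Finally, from the explicit construction in Lemma~\ref{lem:bijection}, the rows of $\mathfrak{t}$ read bottom-to-top are the bottom-to-top rows of $\mathfrak{r}$ followed by $\mathfrak{s}_1^{c},\mathfrak{s}_2^{c},\ldots,\mathfrak{s}_s^{c}$, and similarly for $\mathfrak{t}'$. Reading the minors in exactly this order, as prescribed by the definition of $(\mathfrak{t}|\mathfrak{t}')$, recovers precisely the right-hand side above, proving the claimed identity. Linear independence is then immediate: by Lemma~\ref{lem:bijection} distinct standard rational bitableaux yield distinct standard bitableaux $(\mathfrak{t},\mathfrak{t}')$, and by Theorem~\ref{thm:standardbasis} the corresponding bideterminants are linearly independent in $A_q(n,r+(n-1)s)$; hence so are their nonzero scalar multiples $\iota\bigl((\mathfrak{r},\mathfrak{s})\,|\,(\mathfrak{r}',\mathfrak{s}')\bigr)$, and therefore so are the rational bideterminants themselves. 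The only real obstacle is bookkeeping, namely aligning the top-down row-ordering produced by Jacobi with the bottom-up row-ordering in the definition of $(\mathfrak{t}|\mathfrak{t}')$ and tracking the centrality-based rearrangements; the precise value of $c$ plays no role in the statement.
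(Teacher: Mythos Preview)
Your argument is correct and follows exactly the route the paper intends: the paper's own proof is the one-line remark that the identity ``follows directly from Theorem~\ref{thm:jacobi}, the construction of the bijection and $\iota(\mathfrak{det}_q^{(k)})=\det_q^{k}$,'' and you have simply unpacked those three ingredients in the natural way. Your bookkeeping (the bottom-to-top row ordering, the count $k+\sum_i(\sigma_i-1)=s-t$ absorbed as full-row determinants, and the use of centrality of $\det_q$) is the right way to make the one-line proof precise.
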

\begin{proof}
This follows directly from Theorem~\ref{thm:jacobi}, the construction
of the bijection and
$\iota(\mathfrak{det}_q^{(k)})=\mathrm{det}_q^k$. The second
statement follows from the fact that the 
$(\mathfrak{t}|\mathfrak{t}')$'s are linearly
independent. 
\end{proof}

\begin{lem}\label{lem:detk}
We have 
\begin{gather}
  \sum_{l=1}^n x_{il}\mathfrak{det}_q^{(k)}x_{jl}^*
  =0\text{ for }i\neq j\label{equ:det1}\\
  \sum_{l=1}^n q^{2l}x_{li}\mathfrak{det}_q^{(k)}x_{lj}^*
    =0\text{ for }i\neq j
  \label{equ:det2}\\
  \sum_{l=1}^n q^{2l-2i}x_{li}\mathfrak{det}_q^{(k)}x_{li}^*=
  \sum_{l=1}^n x_{jl}\mathfrak{det}_q^{(k)}x_{jl}^*\label{equ:det3}
\end{gather}
\end{lem}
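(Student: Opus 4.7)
The plan is to proceed by induction on $k$, extending the recursion so that $\mathfrak{det}_q^{(0)}:=1$. With this convention the base case $k=0$ of all three identities is exactly the set of defining relations \eqref{equ:A(nrs):9}, \eqref{equ:A(nrs):10}, \eqref{equ:A(nrs):11} of $A_q(n;r,s)$; in particular, iterating \eqref{equ:A(nrs):11} yields $\sum_l x_{il}x_{il}^* = \mathfrak{det}_q^{(1)} = \sum_l q^{2l-2i}x_{li}x_{li}^*$ for every $i$, which settles the diagonal ($i=j$) pieces of (1) and (2) for $k=0$ and gives (3) for $k=0$.

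For the inductive step I would expand via the recursion $\mathfrak{det}_q^{(k)}=\sum_m x_{1m}\mathfrak{det}_q^{(k-1)}x_{1m}^*$, obtaining
\[
\sum_l x_{il}\,\mathfrak{det}_q^{(k)}\,x_{jl}^*
\;=\; \sum_{l,m} x_{il}\,x_{1m}\,\mathfrak{det}_q^{(k-1)}\,x_{1m}^*\,x_{jl}^*,
\]
and then apply \eqref{equ:A(nrs):1}--\eqref{equ:A(nrs):4} to commute $x_{il}$ past $x_{1m}$ and \eqref{equ:A(nrs):5}--\eqref{equ:A(nrs):8} to commute $x_{1m}^*$ past $x_{jl}^*$. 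The principal term so produced is $\sum_m x_{1m}\bigl(\sum_l x_{il}\,\mathfrak{det}_q^{(k-1)}\,x_{jl}^*\bigr)x_{1m}^*$; for $i\neq j$ the inner sum vanishes by the inductive case of (1), so the principal term is zero. The correction terms coming from the $(q-q^{-1})$-pieces of the commutation relations all have the form $(q-q^{-1})\,h_1\bigl(\sum_l x_{al}\mathfrak{det}_q^{(k-1)}x_{bl}^*\bigr)h_3$ with index shifts $a,b$ dictated by which of the $R$-matrix corrections was used; after the shifts one checks that $a\neq b$ in each case, so these terms again vanish by the inductive hypothesis of (1).

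Relation (2) is handled identically using \eqref{equ:A(nrs):2} and \eqref{equ:A(nrs):6} in place of \eqref{equ:A(nrs):1} and \eqref{equ:A(nrs):5}, or derived from (1) via the $R$-linear automorphism of Remark~\ref{rem:automorphism}, once one checks inductively that $\mathfrak{det}_q^{(k)}$ is fixed by this automorphism (an immediate consequence of the diagonal parts of (1)--(3) together with \eqref{equ:A(nrs):11}). For (3) the same commutation strategy reduces the principal term on each side to $\sum_m x_{1m}\bigl(\sum_l x_{al}\mathfrak{det}_q^{(k-1)}x_{al}^*\bigr)x_{1m}^*$ after invoking the inductive diagonal case of (3) to rewrite the inner sums; this common principal term equals $\sum_m x_{1m}\mathfrak{det}_q^{(k)}x_{1m}^* = \mathfrak{det}_q^{(k+1)}$, and the corrections vanish by the inductive cases of (1) and (2).

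The main obstacle is the bookkeeping of the correction terms: the subcases $l<m$, $l=m$, $l>m$ (and the analogous subdivision for the starred variables), together with the separate behaviour when $i=1$ or $j=1$, each produce slightly different correction terms, and each must be repackaged into an expression that either vanishes by the induction or is absorbed by the cross relations \eqref{equ:A(nrs):9}--\eqref{equ:A(nrs):11} multiplied by outer monomials. No fundamentally new ideas are needed beyond careful $q$-commutator manipulations, but the case analysis is the bulk of the work.
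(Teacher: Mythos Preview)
Your approach is essentially the same as the paper's: the paper writes ``Without loss of generality, we may assume that $k=1$,'' which is exactly your inductive step with $\mathfrak{det}_q^{(k-1)}$ specialised to $1$, and then carries out precisely the case analysis (the subcases $l<m$, $l=m$, $l>m$ and the separate treatment of $i=1$ or $j=1$) that you flag as the main bookkeeping obstacle. One small caution: the correction terms do not immediately appear as a full sum $\sum_l x_{al}\mathfrak{det}_q^{(k-1)}x_{bl}^*$ with $a\neq b$ --- in the paper's computation the diagonal piece and the partial-range corrections must first be recombined (and a further commutation applied) before the inductive hypothesis can be invoked, so your sentence ``after the shifts one checks that $a\neq b$ in each case'' hides a nontrivial repackaging step that you should make explicit.
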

\begin{proof}Without loss of generality, we may assume that $k=1$. 
  Suppose  that $i,j\neq 1$. Then
  \begin{align*}
    \sum_{l=1}^n& x_{il}\mathfrak{det}_q^{(1)}x_{jl}^*=
    \sum_{k,l=1}^n x_{ik}x_{1l}x_{1l}^*x_{jk}^*
    =\sum_{k<l} x_{1l}x_{ik}x_{jk}^*x_{1l}^*
    +q^{-2}\sum_{k} x_{1k}x_{ik}x_{jk}^*x_{1k}^*\\
    &+\sum_{k>l} \left(x_{1l}x_{ik}x_{jk}^*x_{1l}^*
      +(q^{-1}-q)(x_{1k}x_{il}x_{1l}^*x_{jk}^*+
      x_{1l}x_{ik}x_{1k}^*x_{jl}^* ) \right)\\
    &=\sum_{k,l} x_{1l}x_{ik}x_{jk}^*x_{1l}^*
    +(q^{-2}-1)\sum_{k} q x_{1k}x_{ik}x_{1k}^*x_{jk}^*\\
    &+(q^{-1}-q)\sum_{k>l}(x_{1k}x_{il}x_{1l}^*x_{jk}^*+
    x_{1l}x_{ik}x_{1k}^*x_{jl}^* )\\
    &=\delta_{ij}\mathfrak{det}_q^{(2)}
    +(q^{-1}-q)\sum_{k,l}x_{1k}x_{il}x_{1l}^*x_{jk}^*
    =\delta_{ij}\mathfrak{det}_q^{(2)} .
  \end{align*}
For $j\neq 1$ we have 
\begin{align*}
    \sum_{l=1}^n& x_{1l}\mathfrak{det}_q^{(1)}x_{jl}^*=
    \sum_{k,l=1}^n x_{1k}x_{1l}x_{1l}^*x_{jk}^*
    =\sum_{k<l} qx_{1l}x_{1k}x_{jk}^*x_{1l}^*
    +q^{-1}\sum_{k} x_{1k}x_{1k}x_{jk}^*x_{1k}^*\\
    &+\sum_{k>l} \left(q^{-1}x_{1l}x_{1k}x_{jk}^*x_{1l}^*
      +(q^{-1}-q) x_{1k}x_{1l}x_{jl}^*x_{1k}^*\right)\\
    &=\sum_{k,l} q^{-1}x_{1l}x_{1k}x_{jk}^*x_{1l}^*=0.
  \end{align*}
Similarly, one can show that
\begin{eqnarray*}
   \sum_{l=1}^n x_{il}\mathfrak{det}_q^{(1)}x_{1l}^*
   &=&0\text{ for }i\neq 1\\
   \sum_{l=1}^n q^{2l-2i}x_{li}\mathfrak{det}_q^{(1)}x_{lj}^*
   &=&\delta_{ij}\sum_{l=1}^n q^{2l-2}x_{l1}
   \mathfrak{det}_q^{(1)}x_{l1}^*
   \text{ for }i,j\neq 1\\
   \sum_{l=1}^n q^{2l-2}x_{l1}\mathfrak{det}_q^{(1)}x_{lj}^*
   &=&0\text{ for }j\neq 1\\ 
   \sum_{l=1}^n q^{2l-2i}x_{li}\mathfrak{det}_q^{(1)}x_{l1}^*
   &=&0\text{ for }i\neq 1.\\
\end{eqnarray*}
Finally, 
\begin{align*}
  \sum_{l=1}^n &q^{2l-2}x_{l1}\mathfrak{det}_q^{(1)}x_{l1}^*
  =\sum_{l,k} q^{2l-2}x_{l1} x_{1k} x_{1k}^*x_{l1}^*
  =\sum_{l,k\neq 1} q^{2l-2} x_{1k}x_{l1}x_{l1}^* x_{1k}^*\\
  &+\sum_{l\neq 1} q^{2l-4} x_{11}x_{l1}x_{l1}^* x_{11}^*
  +\sum_{k\neq 1} q^{2} x_{1k}x_{11}x_{11}^* x_{1k}^*
  + x_{11}x_{11}x_{11}^* x_{11}^*\\
  &=\mathfrak{det}_q^{(2)}
  +\sum_{l\neq 1} q^{2l-4}(1-q^2) x_{11}x_{l1}x_{l1}^* x_{11}^*
  +\sum_{k\neq 1} (q^{2}-1) x_{1k}x_{11}x_{11}^* x_{1k}^*\\
  &=\mathfrak{det}_q^{(2)}+(1-q^2)\left(
    \sum_{l\neq 1} q^{2l-4} x_{11}x_{l1}x_{l1}^* x_{11}^*
    -q^{-2}\sum_{k\neq 1} x_{11}x_{1k}x_{1k}^* x_{11}^*\right)\\
  &=\mathfrak{det}_q^{(2)}.
\end{align*}
The proof is complete.
\end{proof}

\begin{lem}\label{lem:straightening}
  Suppose $\mathbf r=(r_1,\ldots,r_k),\mathbf s=(s_1,\ldots,s_k)\in
  I(n,k)$ are fixed. 
  Let $j\in\{1,\ldots,n\}$ and $k\geq 1$. 
  Then we have, modulo $\mathfrak{det}_q^{(1)}$,
  \begin{eqnarray*}
  &\sum\limits_{j<j_1<j_2<\ldots <j_k}&(\mathbf r|j_k\ldots j_2j_1)_r(\mathbf
  s|j_1j_2\ldots j_k)_r^*\\
  &&\equiv(-1)^kq^{2\sum_{i=0}^{k-1}i}\sum_{j_1<j_2<\ldots <j_k\leq j}
  (\mathbf r|j_k\ldots j_2j_1)_r(\mathbf
  s|j_1j_2\ldots j_k)_r^*
  \end{eqnarray*}
\end{lem}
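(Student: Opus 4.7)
My plan is to proceed by induction on $k$. The base case $k=1$ reduces to the identity $\sum_{j_1=1}^{n} x_{r_1,j_1}x_{s_1,j_1}^{*} \equiv 0 \pmod{\mathfrak{det}_q^{(1)}}$, which follows immediately from relation~\eqref{equ:A(nrs):9} when $r_1\neq s_1$, and from relation~\eqref{equ:A(nrs):11} together with the identity $\mathfrak{det}_q^{(1)}=\sum_l x_{1l}x_{1l}^{*}$ when $r_1=s_1$ (both giving an element $\equiv 0$ modulo $\mathfrak{det}_q^{(1)}$).

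For the inductive step $k\ge 2$, I would apply Laplace's expansion to both minors in order to pull out the smallest column $j_1$, obtaining
\[(\mathbf{r}|j_k\ldots j_1)_r = \sum_{a=1}^{k}(-q)^{k-a}(\mathbf{r}^{(a)}|j_k\ldots j_2)_r\, x_{r_a,j_1}\]
and analogously $(\mathbf{s}|j_1\ldots j_k)_r^{*} = \sum_{b=1}^{k}(-q^{-1})^{b-1}x_{s_b,j_1}^{*}(\mathbf{s}^{(b)}|j_2\ldots j_k)_r^{*}$, where $\mathbf{r}^{(a)}$ denotes $\mathbf{r}$ with $r_a$ deleted (similarly for $\mathbf{s}^{(b)}$). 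Exploiting the fact from Section~\ref{sec:Aqnrs} that $x$-variables and $x^{*}$-variables live in commuting tensor factors of the presentation of $A_q(n;r,s)$, the product of the two Laplace expansions rearranges to
\[\sum_{a,b}(-q)^{k-a}(-q^{-1})^{b-1}\,\widetilde{F}^{(a,b)}(j_2,\ldots,j_k)\cdot x_{r_a,j_1}x_{s_b,j_1}^{*},\]
where $\widetilde{F}^{(a,b)}(j_2,\ldots,j_k):=(\mathbf{r}^{(a)}|j_k\ldots j_2)_r(\mathbf{s}^{(b)}|j_2\ldots j_k)_r^{*}$ is a $(k-1)$-minor product of exactly the same form as in the statement. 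Summing this identity over $j_1=1,\ldots,n$ and applying the base case term-by-term for each pair $(a,b)$ yields the auxiliary congruence $\sum_{j_1=1}^{n}(\mathbf{r}|j_k\ldots j_1)_r(\mathbf{s}|j_1\ldots j_k)_r^{*}\equiv 0\pmod{\mathfrak{det}_q^{(1)}}$ for any fixed $j_2<\cdots<j_k$.

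I would then split $\sum_{j_1=1}^{n}=\sum_{j_1\le j}+\sum_{j<j_1<j_2}+\sum_{j_1\ge j_2}$ inside the outer summation over $j_2<\cdots<j_k$: the middle piece reproduces the LHS of the lemma, while the $j_1\ge j_2$ piece, reassembled via reverse Laplace and put back into standard column order using the rule $(\ldots j_l j_{l+1}\ldots)_r = -q(\ldots j_{l+1} j_l\ldots)_r$ of Remark~\ref{rem:properties}, becomes a sum to which the inductive hypothesis for the $(k-1)$ case applies. The inductive factor $(-1)^{k-1}q^{2\binom{k-1}{2}}$, combined with the $q$-powers accumulated from Laplace extraction and from the column reorderings, should multiply out to precisely the required coefficient $(-1)^{k}q^{2\binom{k}{2}}$ on the RHS. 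The main obstacle is the bookkeeping: one must verify that the mixed $j_1\ge j_2$ configurations reorganize cleanly into a $(k-1)$-level instance of the lemma without producing stray correction terms lying outside the ideal generated by $\mathfrak{det}_q^{(1)}$, and that all the accumulated signs and powers of $q$ match the prescribed factor exactly.
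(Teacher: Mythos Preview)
Your base case and auxiliary congruence are essentially correct. One quibble: the claim that $x$-variables and $x^{*}$-variables ``live in commuting tensor factors'' is not quite the right justification; rather, after your Laplace expansions the factor $x_{r_a,j_1}$ sits at the \emph{right end} of the $x$-word and $x_{s_b,j_1}^{*}$ at the \emph{left end} of the $x^{*}$-word, so the expression is already of the form $h_1\bigl(\sum_{j_1}x_{r_a,j_1}x_{s_b,j_1}^{*}\bigr)h_3$ and relations~\eqref{equ:A(nrs):9}/\eqref{equ:A(nrs):11} apply directly. No commutation is needed.

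The inductive step, however, does not work as you describe. Take $k=2$ and sum your auxiliary congruence over all $j_2>j$. The ``middle piece'' is indeed the LHS. But the piece $j_1>j_2$, after reordering the columns via Remark~\ref{rem:properties} and relabelling $(j_1,j_2)\mapsto(j_2',j_1')$, becomes
\[
\sum_{j<j_2<j_1}(\mathbf r|j_2j_1)_r(\mathbf s|j_1j_2)_r^{*}
=q^{-2}\sum_{j<j_1'<j_2'}(\mathbf r|j_2'j_1')_r(\mathbf s|j_1'j_2')_r^{*}
=q^{-2}\cdot\text{LHS},
\]
which is \emph{not} a $(k-1)$-instance of the lemma but the same $k=2$ sum again. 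You obtain $(1+q^{-2})\,\text{LHS}\equiv -(\text{piece }j_1\le j)$, and the remaining piece still has $j_2>j$, so the induction does not close. Pushing further in this style yields the desired identity only after cancelling a factor of $1+q^{-2}$ (and for larger $k$, a product of such quantum integers), which is not permissible over a general ring $R$.

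The paper avoids this trap by a trick you are missing: it replaces the right $*$-minor by its left version (a harmless power of $-q$) and then \emph{fully expands} that minor, obtaining the identity
\[
\sum_{\mathbf j\in M^{k,<}}(\mathbf r|j_k\ldots j_1)_r(\mathbf s|j_1\ldots j_k)_l^{*}
=\sum_{\mathbf j\in M^{k}}(\mathbf r|j_k\ldots j_1)_r\,x_{s_1j_1}^{*}\cdots x_{s_kj_k}^{*}.
\]
This converts the ordered sum into an \emph{unordered} one with independent indices $j_1,\dots,j_k$, so that each index can be transported from $\{j+1,\dots,n\}$ to $\{1,\dots,j\}$ using the degree-one relation without ever generating the self-referential terms that derail your approach. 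The column reorderings and the assumption $s_1>s_2>\dots>s_k$ then make the necessary $x^{*}$-commutations free of correction terms, and the $q$-bookkeeping comes out exactly.
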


\begin{proof}
$(\mathbf s|j_1j_2\ldots j_k)_r^*$ and
$(\mathbf s|j_1j_2\ldots j_k)_l^*$ differ only on a power of $-q$
not depending on $j_1,j_2,\ldots,j_k$. Thus we can show the lemma with 
$(\_,\_)_r^*$ replaced by
$(\_,\_)_l^*$. Similarly, we can assume that $r_1<r_2<\ldots <r_k$
and $s_1>s_2>\ldots >s_k$. 
Note that modulo $\mathfrak{det}_q^{(1)}$  we have the relations 
$\sum_{k=1}^n x_{ik}x_{jk}^* \equiv0$.
It follows that the lemma is
true for $k=1$.
Assume that the lemma holds for $k-1$. 
If $M$ is an ordered set, let $M^{k,<}$ be the set of 
$k$-tuples in $M$ with increasing entries. 
For a subset
$M\subset \{1,\ldots,n\}$ we have 
\begin{align*}
  \sum_{\mathbf j\in M^{k,<}}&(\mathbf r|j_k\ldots j_2j_1)_r(\mathbf
  s|j_1j_2\ldots j_k)_l^*\\
  &=
  \sum_{\mathbf j\in M^{k,<},w}(-q)^{-l(w)}
  (\mathbf r|j_k\ldots j_2j_1)_r
  x_{s_1j_{w1}}^*\ldots x_{s_kj_{wk}}^*\\
  &= \sum_{\mathbf j\in M^{k,<},w}
  (\mathbf r|j_{wk}\ldots j_{w1})_r
  x_{s_1j_{w1}}^*\ldots x_{s_kj_{wk}}^*\\
  &= \sum_{\mathbf j\in M^k}
  (\mathbf r|j_{k}\ldots j_{1})_r
  x_{s_1j_{1}}^*\ldots x_{s_kj_{k}}^*
\end{align*}
Applying Laplace's Expansion, we can write
a quantum minor $(\mathbf r|\mathbf j_1\mathbf j_2)_r$ as a linear
combination of products of quantum minors, say 
\[
(\mathbf r|\mathbf j_1\mathbf j_2)_r
=\sum_lc_l(\mathbf r'_l|\mathbf j_1)_r(\mathbf r''_l|\mathbf j_2)_r.
\]
Then with $\epsilon_k:=(-1)^kq^{2\sum_{i=0}^{k-1}i}$, 
$\mathbf j=(j_1,\ldots,j_k)$ and 
$\mathbf j'=(j_1,\ldots,j_{k-1})$, $D=\{j+1\ldots n\}$ and
$C=\{1\ldots  j\}$, we have
 
\begin{align*}
  \sum_{\mathbf j\in D^{k,<}}&(\mathbf r|j_k\ldots j_2j_1)_r(\mathbf
  s|j_1j_2\ldots j_k)_l^*
  =\sum_{\mathbf j\in D^{k}}
  (\mathbf r|j_{k}\ldots j_{1})_r
  x_{s_1j_{1}}^*\ldots x_{s_kj_{k}}^*\\
  &=\sum_{\mathbf j\in D^{k},l}
  c_l(\mathbf r'_l|j_{k})_r(\mathbf r''_l|j_{k-1}\ldots j_{1})_r
  x_{s_1j_{1}}^*\ldots x_{s_{k-1}j_{k-1}}^*x_{s_kj_{k}}^*\\
  &\equiv\epsilon_{k-1}
  \sum_{\genfrac{}{}{0pt}{}{\mathbf j'\in C^{k-1},l}{j_k>j}}
  c_l(\mathbf r'_l|j_{k})_r(\mathbf r''_l|j_{k-1}\ldots j_{1})_r
  x_{s_1j_{1}}^*\ldots x_{s_{k-1}j_{k-1}}^*x_{s_kj_{k}}^*\\
  &=\epsilon_{k-1}
  \sum_{\genfrac{}{}{0pt}{}{\mathbf j'\in C^{k-1}}{j_k>j}}
  (\mathbf r|j_{k}j_{k-1}\ldots j_{1})_r
  x_{s_1j_{1}}^*\ldots x_{s_{k-1}j_{k-1}}^*x_{s_kj_{k}}^*\\
  &=\epsilon_{k-1}
  \sum_{\genfrac{}{}{0pt}{}{\mathbf j'\in C^{k-1}}{j_k>j}}
  (-q)^{k-1}
  (\mathbf r|j_{k-1}\ldots j_{1}j_{k})_r
  x_{s_kj_{k}}^*x_{s_1j_{1}}^*\ldots x_{s_{k-1}j_{k-1}}^*\\
  &=\epsilon_{k-1}
  \sum_{\genfrac{}{}{0pt}{}{\mathbf j'\in C^{k-1},l}{j_k>j}}
  (-q)^{k-1}
  c_l(\mathbf r_l'|j_{k-1}\ldots j_{1})_rx_{\mathbf r_l''j_{k}}
  x_{s_kj_{k}}^*x_{s_1j_{1}}^*\ldots x_{s_{k-1}j_{k-1}}^*\\
  &\equiv-\epsilon_{k-1}
  \sum_{\mathbf j\in C^{k},l}
  (-q)^{k-1}
  c_l(\mathbf r_l'|j_{k-1}\ldots j_{1})_rx_{\mathbf r_l''j_{k}}
  x_{s_kj_{k}}^*x_{s_1j_{1}}^*\ldots x_{s_{k-1}j_{k-1}}^*\\
  &=-\epsilon_{k-1}
  \sum_{\mathbf j\in C^{k}}
  (-q)^{k-1}
  (\mathbf r|j_{k-1}\ldots j_{1}j_{k})_r
  x_{s_kj_{k}}^*x_{s_1j_{1}}^*\ldots x_{s_{k-1}j_{k-1}}^*\\
  &=-\epsilon_{k-1}
  \sum_{\mathbf j\in C^{k,<}}
  (-q)^{k-1}
  (\mathbf r|j_{k}\ldots j_{1})_r
  (s_ks_1\ldots s_{k-1}|j_1\ldots j_{k})_l^*
  \\
  &=-\epsilon_{k-1}
  \sum_{\mathbf j\in C^{k,<}}
  (-q)^{2(k-1)}
  (\mathbf r|j_{k}\ldots j_{1})_r
  (s_1\ldots s_k|j_1\ldots j_{k})_l^*\\
  &=\epsilon_k\sum_{\mathbf j\in C^{k,<}}
  (\mathbf r|j_k\ldots j_2j_1)_r(\mathbf
  s|j_1j_2\ldots j_k)_l^* 
\end{align*}
and the proof is complete.
\end{proof}

\begin{lem}\label{lem:straightening2}
Let $\mathbf r'$ and $\mathbf s'$ be strictly increasing 
multi indices, considered as
tableaux with one row. Let $i$ be the maximal entry appearing 
and suppose that  $i$ is minimal such
that $i$ violates condition~\eqref{equ:condition}.
Let $I$ be the set of entries appearing in both  $\mathbf r'$ and
$\mathbf s'$, then we have $i\in I$. 
Let $L_1=\{k_1,\ldots,k_{l_1}\}$ 
be the set of entries of $\mathbf r'$  not
appearing in $\mathbf s'$, let $L_2=\{k_1',\ldots,k_{l_2}'\}$
be the set of entries of $\mathbf s'$ not
appearing in $\mathbf r'$ and let
 $i_1<i_2<\ldots
<i_k=i$ be the entries of $I$.

Let $D=\{i_1,\ldots,i_k,i_k+1,i_k+2,\ldots,n\}$ and
$C=\{1,\ldots,n\}\backslash (D\cup L_1\cup L_2)$. Furthermore,
for $j_1,\ldots,j_t\in \{1,\ldots,n\}$ let 
\[m(j_1,\ldots,j_t)=|\{(l,c)\in\{1,\ldots,t\}\times
C:j_l<c\}|.\]
Let $\mathbf
k=(k_1,\ldots,k_{l_1}), \mathbf k'=(k_1',\ldots,k_{l_2}')$
and let $\mathbf r$ and $\mathbf s$ be multi indices of the same
length as $\mathbf r'$ resp.~$\mathbf s'$, then
we have  
\[
\sum_{\mathbf j\in D^{k,<}}
q^{2m(\mathbf j)}
(\mathbf r|\mathbf k j_k\ldots j_1)_r
(\mathbf s|j_1\ldots j_k \mathbf k' )_r^*
\equiv0\text{ modulo }\mathfrak{det}_q^{(1)}.
\]
\end{lem}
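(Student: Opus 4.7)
The plan is to induct on $k = |I|$, using Lemma~\ref{lem:straightening} as the main reduction tool. The key source of vanishing is combinatorial: since condition~\eqref{equ:condition} is violated at $i = i_k$, one has $\mathrm{first}_{i_k}(\mathfrak{r}',\mathfrak{s}') = 2k + l_1 + l_2 > i_k$, hence $|C| = i_k - k - l_1 - l_2 < k$. Thus any sum eventually indexed by $C^{k,<}$ is empty, and the strategy is to convert the original sum over $D^{k,<}$ into such a sum via iterated straightening.

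For the base case $k = 1$ one has $|C| = 0$ and $L_1 \cup L_2 = \{1,\ldots,i_1 - 1\}$. Whenever $j \in L_1 \cup L_2$, the index $j$ coincides with an entry of $\mathbf{k}$ or of $\mathbf{k}'$, so one of the quantum minors $(\mathbf{r}|\mathbf{k}\,j)_r$, $(\mathbf{s}|j\,\mathbf{k}')_r^*$ has a repeated column and vanishes. Therefore
\[
\sum_{j \in D} (\mathbf{r}|\mathbf{k}\,j)_r (\mathbf{s}|j\,\mathbf{k}')_r^*
\;=\;
\sum_{j=1}^{n} (\mathbf{r}|\mathbf{k}\,j)_r (\mathbf{s}|j\,\mathbf{k}')_r^*,
\]
and Lemma~\ref{lem:straightening} with $k=1$ and threshold parameter $0$ identifies the right-hand side with $-\sum_{j \leq 0}(\cdots) = 0$ modulo $\mathfrak{det}_q^{(1)}$.

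For the inductive step $k > 1$ the idea is to peel off the outermost summation variable $j_k$ and shift it through the summation domain using Lemma~\ref{lem:straightening} applied with the extra entries $\mathbf{k}$, $\mathbf{k}'$ and the remaining $j_1,\ldots,j_{k-1}$ regarded as frozen spectator columns. This is legitimate because the Laplace expansion underlying the proof of Lemma~\ref{lem:straightening} treats additional columns uniformly. Each such shift produces a factor of $q^{2 \cdot (\text{number of } C\text{-elements passed by } j_k)}$, and the weight $q^{2m(\mathbf{j})}$ is calibrated precisely so as to absorb these factors. After the shift and a re-indexing, the resulting sum takes the form of Lemma~\ref{lem:straightening2} with $I$ replaced by $I \setminus \{i_k\}$ (and $D$, $C$ updated accordingly), so the inductive hypothesis applies. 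Iterating produces a sum indexed by $C^{k,<}$, which is empty since $|C| < k$.

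The main obstacle is the bookkeeping: one must verify that the powers of $q$ produced by the successive shifts combine to exactly $q^{2m(\mathbf{j})}$, and that the decomposition of the non-contiguous domain $D = \{i_1,\ldots,i_k\} \cup \{i_k + 1,\ldots,n\}$ behaves correctly under these shifts, with the $L_1 \cup L_2$ positions contributing zero throughout because of the vanishing of the associated minors. A secondary technical point is to formulate a ``Lemma~\ref{lem:straightening} with spectators'' in which $\mathbf{k}$, $\mathbf{k}'$ and the other $j_l$ appear as frozen columns; this is a routine extension since the Laplace-expansion argument in the proof of Lemma~\ref{lem:straightening} never interacts with those columns.
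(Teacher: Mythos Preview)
Your opening count is right: minimality of $i$ forces $i=2k+l_1+l_2-1$, hence $|C|=k-1$. The base case $k=1$ is also fine.

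The inductive step, however, does not work as you describe. Two concrete problems:

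\textbf{(1) The reduction to a smaller instance of the lemma is ill-posed.} You claim that after peeling off $j_k$ and shifting, one obtains the statement of Lemma~\ref{lem:straightening2} with $I$ replaced by $I\setminus\{i_k\}$. But the hypotheses of the lemma require that the maximal entry be the \emph{minimal} one violating condition~\eqref{equ:condition}. By minimality of $i=i_k$, the index $i_k-1$ does \emph{not} violate the condition, and hence after deleting $i_k$ from both rows no violation remains. So there is no smaller instance to invoke.

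\textbf{(2) The shift does not land in $C^{k,<}$.} Lemma~\ref{lem:straightening} moves variables from $\{j+1,\ldots,n\}$ to $\{1,\ldots,j\}$, not to $C$. After the shift and after killing the $L_1\cup L_2$ terms by repeated columns, the summation domain becomes $(C\cup D_-)^{k,<}$, where $D_-$ is the part of $D$ below $c_{\max}=\max C$. Since $|C|=k-1$ but $D_-$ may be nonempty, this sum is generally nonzero term by term; the emptiness argument you sketch does not apply.

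The paper's proof proceeds differently. It applies Lemma~\ref{lem:straightening} \emph{once}, to the block of variables lying in $D_+=\{d\in D:d>c_{\max}\}$ (for each way of splitting $\mathbf{j}$ into a $D_-$-part and a $D_+$-part), obtaining a sum indexed by $k$-subsets $S\subset C\cup D_-$ together with a decomposition $S=S_1\dot\cup S_2$. The heart of the argument is then a \emph{combinatorial cancellation}: one shows that every such $S$ contains an element $d\in D\cap S$ with $m(d)=|\{s\in S:s>d\}|$, and that moving this $d$ between $S_1$ and $S_2$ produces two summands differing only in sign. The weight $q^{2m(\mathbf{j})}$ is calibrated to make this pairing exact. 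This pigeonhole-and-pairing step is what your proposal is missing; the iterated shifting you outline does not substitute for it.
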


\begin{proof}
  Note that $i\in I$ and $i=2k+l_1+l_2-1$, otherwise $i-1$ would violate 
  condition~\eqref{equ:condition}. Thus $|C|=k-1$.
  Let $c_{max}$ be the 
  maximal element of $C$, 
  $\tilde{D}=\{c_{max}+1,c_{max}+2,\ldots,n\}
  \subset D\cup L_1\cup L_2$, $\tilde{C}=\{1,\ldots,c_{max}\}$, 
  $D_-=\{d\in D:d<c_{max}\}$ and 
  $D_+=\{d\in D:d>c_{max}\}$. With 
  $\tilde{\mathbf j}=(j_1,\ldots, j_l)$ and 
  $\hat{\mathbf j}=(j_{l+1},\ldots, j_k)$ we have 
  \begin{align}
    \sum_{\mathbf j\in D^{k,<}}&
    q^{2m(\mathbf j)}
    (\mathbf r|\mathbf k j_k\ldots j_1)_r
    (\mathbf s|j_1\ldots j_k \mathbf k' )_r^*\notag\\
    &=\sum_{l=0}^k
    \sum_{\tilde{\mathbf j}\in D_-^{l,<}}
    q^{2m(\tilde{\mathbf j})}
    \sum_{\hat{\mathbf j}\in D_+^{k-l,<}}
    (\mathbf r|\mathbf k j_k\ldots j_1)_r
    (\mathbf s|j_1\ldots j_k \mathbf k' )_r^* .
    \label{equ:xj}
  \end{align}
  Without loss of generality we may assume that the entries in
  $\mathbf s$ are increasing. 
  We apply Laplace's Expansion and 
  Lemma~\ref{lem:straightening} to get 
  for fixed $l$ and $\tilde{\mathbf j}$
  \begin{align*}
    \sum_{\hat{\mathbf j}\in D_+^{k-l,<}}&
    (\mathbf r|\mathbf k j_k\ldots j_1)_r
    (\mathbf s|j_1\ldots j_k \mathbf k' )_r^*
    =\sum_{\hat{\mathbf j}\in \tilde{D}^{k-l,<}}
    (\mathbf r|\mathbf k j_k\ldots j_1)_r
    (\mathbf s|j_1\ldots j_k \mathbf k' )_r^*\\
    &=q^{2l(k-l)}
    \sum_{\hat{\mathbf j}\in \tilde{D}^{k-l,<}}
    (\mathbf r|\mathbf k j_l\ldots j_1j_k\ldots j_{l+1})_r
    (\mathbf s|j_{l+1}\ldots j_k j_1\ldots j_l\mathbf k')_r^*\\
    &\equiv\epsilon_{k-l}q^{2l(k-l)}
    \sum_{\hat{\mathbf j}\in \tilde{C}^{k-l,<}}
    (\mathbf r|\mathbf k j_l\ldots j_1j_k\ldots j_{l+1})_r
    (\mathbf s|j_{l+1}\ldots j_k j_1\ldots j_l\mathbf k')_r^*\\
    &=\epsilon_{k-l}q^{2l(k-l)}\!\!\!\!\!\!\!\!
    \sum_{\hat{\mathbf j}\in (C\cup D_-)^{k-l,<}}
    (\mathbf r|\mathbf k j_l\ldots j_1j_k\ldots j_{l+1})_r
    (\mathbf s|j_{l+1}\ldots j_k j_1\ldots j_l\mathbf k')_r^* .
  \end{align*}
  This expression can be substituted into \eqref{equ:xj}. 
  Each nonzero summand belongs to a disjoint union 
  $S_1\dot\cup S_2=S\subset C\cup D_-$ such that $|S|=k$, 
  $S_1=\{j_1,\ldots,j_l\}$ and $S_2=\{j_{l+1},\ldots,j_k\}$. 
  We will show that the summands belonging to some fixed set $S$
  cancel out. 

  Therefore, we claim that for each subset $S\subset C\cup D_-$ 
  with $k$ elements
  there exists some $d\in D\cap S$ such that 
  $m(d)=|\{s\in S:s>d\}|$.  Suppose not.
  $S$ contains at least one element of $D$
  since $|C|=k-1$. 
  Let $s_1<s_2<\ldots <s_m$ be the elements 
  of $D\cap S$. We show by downward induction that 
  $m(s_l)>|\{s\in S:s>s_l\}|$ for  $1\leq l\leq m$:
  $m(s_m)$ is
  the cardinality of 
  $\{s_m+1,\ldots,c_{max}\}\cap C$.
  Since all $s\in S$ with $s>s_m$ are elements of $C$ we have 
  $\{s_m+1,\ldots,c_{max}\}\cap S\subset 
  \{s_m+1,\ldots,c_{max}\}\cap C$, and thus 
  $m(s_m)\geq|\{s\in S:s>s_m\}|$. By assumption we have
  $>$ instead of $\geq$. 
  Suppose now, that 
  $m(s_l)>|\{s\in S:s>s_l\}|$. 
  We have $\{s\in S:s_{l-1}<s\leq s_l\}=
  \{s\in S\cap C:s_{l-1}<s< s_l\}\cup\{s_l\}$, thus $S$
  contains at most $m(s_{l-1})-m(s_l)$
  elements between  $s_{l-1}$ and $s_l$, and thus at most 
  $m(s_{l-1})-m(s_l)+1+m(s_l)-1
  =m(s_{l-1})$ elements $>s_{l-1}$. By assumption we have 
  $m(s_{l-1})>|\{s\in S:s>s_{l-1}\}|$.
  We have shown that $S$ contains less than $m(s_1)$
  elements greater than $s_1$,
  thus $S$ contains less than  $ |C|+1=k$ elements
  which is a contradiction. This shows the claim. 

  Let $S\subset C\cup D_-$ be fixed subset of cardinality $k$. 
  By the previous consideration there is an element $d\in D\cap S$
  with $m(d)=|\{s\in S:s>d\}|$.
  We claim that the summand
  for $S_1,S_2$ with $d\in S_1$ cancels the summand for   
  $S_1\backslash\{d\},S_2\cup\{d\}$. 
  Note that
  \begin{align*}
    (\mathbf r|&\mathbf k j_l\ldots\hat{d}
    \ldots j_1j_k\ldots{d}\ldots j_{l+1})_r
    (\mathbf s|j_{l+1}\ldots{d}\ldots j_k
    j_1\ldots\hat{d}\ldots j_l\mathbf k')^*_r\\
    &= q^{2|\{s\in S:s>d\}|-2(l-1)}
    (\mathbf r|\mathbf k j_l\ldots j_1j_k\ldots j_{l+1})_r
    (\mathbf s|j_{l+1}\ldots j_kj_1\ldots j_l\mathbf k')^*_r .
  \end{align*}
  Comparing coefficients, we see that both summands cancel. 
\end{proof}

\begin{thm}[Rational Straightening Algorithm]
  The set of bideterminants of standard rational bitableaux forms an
  $R$-basis of $A_q(n;r,s)$.  
\end{thm}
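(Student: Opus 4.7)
The plan is to establish the two halves of the theorem—linear independence and spanning—by separate arguments, the spanning statement being the bulk of the work.

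For linear independence, I would apply the map $\iota$ of Section~\ref{sec:iota} directly. By the lemma immediately preceding Lemma~\ref{lem:detk}, $\iota$ sends the bideterminant of a standard rational bitableau $[(\mathfrak r,\mathfrak s),(\mathfrak r',\mathfrak s')]$ to $(-q)^{c(\mathfrak t,\mathfrak t')}(\mathfrak t\mid\mathfrak t')$, where $(\mathfrak t,\mathfrak t')$ is the ordinary standard bitableau associated to it by the bijection of Lemma~\ref{lem:bijection}. Distinct standard rational bitableaux correspond to distinct $(\mathfrak t,\mathfrak t')$, and the ordinary bideterminants $(\mathfrak t\mid\mathfrak t')$ are $R$-linearly independent by Theorem~\ref{thm:standardbasis}; the asserted linear independence in $A_q(n;r,s)$ follows at once.

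Spanning I would prove by a straightening algorithm in three stages. First, any element of $A_q(n;r,s)$ is rewritten as a linear combination of (not necessarily standard) rational bideterminants by repeatedly invoking relations~\eqref{equ:A(nrs):9}--\eqref{equ:A(nrs):11} to group paired summation indices into central $\mathfrak{det}_q^{(k)}$-blocks, and then assembling the remaining $x_{ij}$'s and $x_{ij}^*$'s into quantum minors via Laplace's expansion. Second, the outer tableaux $\mathfrak r,\mathfrak r',\mathfrak s,\mathfrak s'$ are brought into individually standard form by running the classical straightening algorithm of \cite{huangzhang} inside the factors $(\mathfrak r\mid\mathfrak r')$ and $(\mathfrak s\mid\mathfrak s')^*$; this uses only relations~\eqref{equ:A(nrs):1}--\eqref{equ:A(nrs):8} and leaves $k$ unchanged. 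Third, whenever condition~\eqref{equ:condition} fails at some index $i$, Lemma~\ref{lem:straightening2} provides a relation expressing the offending sum of minors modulo $\mathfrak{det}_q^{(1)}$; the resulting $\mathfrak{det}_q^{(1)}$-term is absorbed into the existing central block to produce a new rational bideterminant with $\mathfrak{det}_q^{(k+1)}$ and outer tableaux of strictly smaller total size.

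The main obstacle is controlling this iterative procedure and proving that it terminates at a linear combination of \emph{standard} rational bideterminants. The natural outer induction is on $|\rho|+|\sigma|=r+s-2k$, which strictly decreases each time the third stage is invoked, so that stage can fire at most $\min(r,s)$ times; between invocations, the second stage terminates by the classical straightening theorem. The delicate point is the absorption in the third stage: one must verify that a $\mathfrak{det}_q^{(1)}$-factor produced in the middle of a product of minors can legitimately be combined with the adjacent $\mathfrak{det}_q^{(k)}$-block to yield $\mathfrak{det}_q^{(k+1)}$ times the appropriate residual bideterminant. This is exactly what Lemma~\ref{lem:detk} is set up to provide: its three identities supply the centralizing relations needed to move the new factor past the surrounding minors and merge it into the central block without disturbing the overall bitableau structure, which together with the termination argument above completes the proof.
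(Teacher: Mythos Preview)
Your linear-independence argument is fine and matches the paper. The spanning argument has the right shape but contains a genuine gap in the termination analysis, and omits one reduction step.

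The gap is in Stage~3. Lemma~\ref{lem:straightening2} does \emph{not} express the offending bideterminant as something lying entirely in the $\mathfrak{det}_q^{(1)}$-ideal. What it says is that a certain sum over $\mathbf j\in D^{k,<}$ is $\equiv 0$ modulo $\mathfrak{det}_q^{(1)}$; the offending bideterminant is (up to a power of $-q$) the single summand $\mathbf j=(i_1,\dots,i_k)$ in that sum. Solving for it therefore yields
\[
(\text{offending term}) \;=\; -\sum_{\mathbf j\neq(i_1,\dots,i_k)}(\cdots) \;+\; (\text{element of }\mathfrak{det}_q^{(1)}\text{-ideal}).
\]
The second piece does increase $k$ as you say, but the first piece is a linear combination of rational bideterminants with the \emph{same} $k$ (hence the same $|\rho|+|\sigma|$); only their content on the primed side is strictly smaller in the lexicographic order, because the remaining $\mathbf j$'s involve larger entries than $i_1,\dots,i_k$. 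Your induction on $|\rho|+|\sigma|$ alone therefore does not terminate. The paper fixes this by inducting lexicographically on the pair (number of boxes, content): the $\mathfrak{det}_q^{(1)}$-part drops the first coordinate, the remaining summands drop the second.

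A second omission: Lemma~\ref{lem:straightening2} is formulated only for the primed side $(\mathfrak r',\mathfrak s')$. You do not say how to treat the case where condition~\eqref{equ:condition} fails for $(\mathfrak r,\mathfrak s)$. The paper handles this via the automorphism of Remark~\ref{rem:automorphism}, which interchanges the two sides of a bideterminant up to a scalar; without this (or an analogue of Lemma~\ref{lem:straightening2} on the left), your Stage~3 cannot be invoked when only the unprimed rational tableau is nonstandard. Finally, your Stage~1 is more elaborate than necessary: since monomials already give rational bideterminants with $k=0$ after applying Theorem~\ref{thm:standardbasis} to each factor, one may simply start from $k=0$ rather than first grouping paired indices.
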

\begin{proof}
  We have to show that the bideterminants of standard rational
  bi\-ta\-bleaux generate $A_q(n;r,s)$.  Clearly, the bideterminants
  $((\mathfrak{r},\mathfrak{s})|(\mathfrak{r}',\mathfrak{s}'))$ with
  $\mathfrak{r},\mathfrak{r}',\mathfrak{s},\mathfrak{s}'$ standard
  tableaux generate $A_q(n;r,s)$.  Let $\mathrm{cont}(\mathfrak{r})$
  resp.~$\mathrm{cont}(\mathfrak{s})$ be the content of $\mathfrak{r}$
  resp.~$\mathfrak{s}$ defined in Definition~\ref{defn:content}.
  
  Let 
  $\mathfrak{r},\mathfrak{r}',\mathfrak{s},\mathfrak{s}'$ be 
  standard tableaux and suppose that
  the rational bitableau
  $[(\mathfrak{r},\mathfrak{s}),(\mathfrak{r}',\mathfrak{s}')]$ is not
  standard. It suffices to show that the bideterminant
  $((\mathfrak{r},\mathfrak{s})|(\mathfrak{r}',\mathfrak{s}'))$ is a
  linear combination of bideterminants
  $((\hat{\mathfrak{r}},\hat{\mathfrak{s}})|
  (\hat{\mathfrak{r}}',\hat{\mathfrak{s}}'))$ such that
  $\hat{\mathfrak{r}}$ has fewer boxes than $\mathfrak{r}$ or
  $\mathrm{cont}(\mathfrak{r})>\mathrm{cont}(\hat{\mathfrak{r}})$ or
  $\mathrm{cont}(\mathfrak{s})>\mathrm{cont}(\hat{\mathfrak{s}})$ in
  the lexicographical order.  Without loss of generality we make the
  following assumptions:
  \begin{itemize}
  \item In the nonstandard rational bi\-ta\-bleau
    $[(\mathfrak{r},\mathfrak{s}),(\mathfrak{r}',\mathfrak{s}')]$ the
    rational tableau $(\mathfrak{r}',\mathfrak{s}')$ is
    nonstandard. Note that the automorphism of
    Remark~\ref{rem:automorphism} maps a bideterminant
    $((\mathfrak{r},\mathfrak{s})|(\mathfrak{r}',\mathfrak{s}'))$ to
    the bideterminant
    $((\mathfrak{r}',\mathfrak{s}')|(\mathfrak{r},\mathfrak{s}))$.
  \item Suppose that $(\mathfrak{r},\mathfrak{s})$ 
    and $(\mathfrak{r}',\mathfrak{s}')$ are
    $(\rho,\sigma)$-tableaux. In view of Lemma~\ref{lem:detk} we can
    assume that  $\rho\in \Lambda^+(r)$ and
    $\sigma\in\Lambda^+(s)$. 
  \item $\mathfrak{r},\mathfrak{r}',\mathfrak{s},\mathfrak{s}'$
    are tableaux with only one row (each bideterminant 
    has a factor of this type, and we can use
    Theorem~\ref{thm:standardbasis} to write nonstandard
    bideterminants as a linear combination of standard ones of the
    same content. 
  \item Let $i$ be minimal such that condition~\eqref{equ:condition} of 
    Definition~\ref{defn:rationaltableaux}   is violated for $i$. 
    Applying Laplace's Expansion, we may assume that there is no greater
    entry than $i$ in $\mathfrak{r}'$ and in $\mathfrak{s}'$.
  \end{itemize}
  Note that all elements of $A_q(n;r,s)$
  having a factor $\mathfrak{det}_q^{(1)}$
  can be written as a linear combination of
  bideterminants of rational $(\rho,\sigma)$-bitableaux with
  $\rho\in\Lambda^+(r-k), k>0$. Thus, it suffices to show that 
  $((\mathfrak{r},\mathfrak{s})|(\mathfrak{r}',\mathfrak{s'}))$ is, 
  modulo $\mathfrak{det}_q^{(1)}$,  a linear
  combination of bideterminants 
  of 'lower content'. 
  The  summand of highest content 
  in Lemma~\ref{lem:straightening2} is that one
  for $\mathbf j=(i_1,i_2,\ldots,i_k)$, and this summand 
  is a scalar multiple (a power of $-q$, which is invertible)
  of $((\mathfrak{r},\mathfrak{s})|(\mathfrak{r}',\mathfrak{s}'))$. 
\end{proof}

The following is an immediate consequence of the preceding theorem and
Lemma~\ref{lem:bijection}.

\begin{cor}
There exists an $R$-linear map $\phi:A_q(n,r+(n-1)s)\to A_q(n;r,s)$
given on a basis by $\phi(\mathfrak{t}|\mathfrak{t}')
:=(-q)^{-c(\mathfrak{t},\mathfrak{t}')}
((\mathfrak{r},\mathfrak{s})|(\mathfrak{r}',\mathfrak{s}'))$ if the
shape $\lambda$ of $\mathfrak{t}$ satisfies $\sum_{i=1}^s\lambda_i\geq
(n-1)s$  where $(\mathfrak{r},\mathfrak{s})$ and
$(\mathfrak{r}',\mathfrak{s}')$ are the rational tableaux respectively
corresponding to $\mathfrak{t}$ and $\mathfrak{t}'$ under the
correspondence of Lemma~\ref{lem:bijection}, and
$\phi(\mathfrak{t}|\mathfrak{t}') := 0$ otherwise. We have
\[\phi\circ\iota=\mathrm{id}_{A_q(n;r,s)}\]
and thus $\pi=\iota^*$ is surjective.
\end{cor}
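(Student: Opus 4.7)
The plan is to define $\phi$ on the standard bideterminant basis of $A_q(n,r+(n-1)s)$ (provided by Theorem~\ref{thm:standardbasis}) and then verify $\phi\circ\iota=\mathrm{id}$ directly on the standard rational bideterminant basis of $A_q(n;r,s)$ (provided by the Rational Straightening Algorithm). Since both source and target are free with known bases, $\phi$ is well-defined by its prescription on basis elements; the only matter is to check the composition.

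First, I would observe that the formula in the corollary is forced by the lemma preceding it: for a standard rational bitableau $[(\mathfrak r,\mathfrak s),(\mathfrak r',\mathfrak s')]$ with associated standard bitableau $[\mathfrak t,\mathfrak t']$ under the bijection of Lemma~\ref{lem:bijection}, we have
\[
\iota\bigl((\mathfrak r,\mathfrak s)|(\mathfrak r',\mathfrak s')\bigr)
= (-q)^{c(\mathfrak t,\mathfrak t')}(\mathfrak t|\mathfrak t').
\]
Thus if $\phi\circ\iota$ is to be the identity on this basis element, we must have $\phi(\mathfrak t|\mathfrak t')=(-q)^{-c(\mathfrak t,\mathfrak t')}\bigl((\mathfrak r,\mathfrak s)|(\mathfrak r',\mathfrak s')\bigr)$. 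Since $-q$ is invertible this is legitimate.

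Next I would verify that the prescription makes sense on \emph{every} basis element $(\mathfrak t|\mathfrak t')$: by Lemma~\ref{lem:bijection} the standard $\lambda$-tableaux with $\sum_{i=1}^s\lambda_i\geq(n-1)s$ are in bijection with all standard rational $(\rho,\sigma)$-tableaux (as $k$ runs from $0$ to $\min(r,s)$), so the formula unambiguously assigns a rational bideterminant in $A_q(n;r,s)$ in the first case, while in the second case we simply set $\phi(\mathfrak t|\mathfrak t')=0$. Extending $R$-linearly using Theorem~\ref{thm:standardbasis}, $\phi$ is defined on all of $A_q(n,r+(n-1)s)$.

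The identity $\phi\circ\iota=\mathrm{id}_{A_q(n;r,s)}$ is then checked on the basis of standard rational bideterminants of $A_q(n;r,s)$, where it reduces to the direct computation $\phi\bigl((-q)^{c(\mathfrak t,\mathfrak t')}(\mathfrak t|\mathfrak t')\bigr)=((\mathfrak r,\mathfrak s)|(\mathfrak r',\mathfrak s'))$ — note that we never hit the ``otherwise'' branch because the shape $\lambda$ produced by the bijection automatically satisfies $\sum_{i=1}^s\lambda_i\geq(n-1)s$ by construction. Finally, dualizing the identity $\phi\circ\iota=\mathrm{id}$ yields $\iota^*\circ\phi^*=\mathrm{id}_{S_q(n;r,s)}$, so $\pi=\iota^*$ admits an $R$-linear right inverse and is therefore surjective. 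There is no real obstacle here: all the heavy lifting (the straightening algorithm, Jacobi's ratio theorem, the bijection of shapes) is already done, and the corollary is essentially a formal consequence of assembling those pieces.
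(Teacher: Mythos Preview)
Your proposal is correct and follows exactly the approach the paper intends: the corollary is stated as an immediate consequence of the Rational Straightening Algorithm and Lemma~\ref{lem:bijection}, and your argument simply spells out the formal verification that $\phi$ (defined on the standard bideterminant basis via Theorem~\ref{thm:standardbasis}) satisfies $\phi\circ\iota=\mathrm{id}$ on the standard rational bideterminant basis, then dualizes. There is nothing to add---the paper itself provides no further proof beyond the remark that it is immediate.
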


As noted in Section~\ref{section:submodule} we now have the main
result.

\begin{thm}[Schur--Weyl duality for mixed tensor space, II]
  \label{thm:schurweyl2}
  \begin{eqnarray*}
    S_q(n;r,s)&=&\mathrm{End}_{\mathfrak{B}_{r,s}(q)}
    (V^{\otimes r}\otimes {V^*}^{\otimes s})
    =\rho_{\mathrm{mxd}}({\mathbf U})=\rho_{\mathrm{mxd}}({\mathbf U}')
  \end{eqnarray*}
Furthermore, 
$S_q(n;r,s)$ is $R$-free with a basis 
indexed by standard rational bitableau. 
\end{thm}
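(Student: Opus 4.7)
The plan is that at this point, essentially all of the hard work has been done in the preceding sections, and the proof of the final theorem amounts to assembling the pieces. The first equality $S_q(n;r,s)=\mathrm{End}_{\mathfrak{B}_{r,s}(q)}(V^{\otimes r}\otimes {V^*}^{\otimes s})$ is simply the definition from Section~\ref{section:submodule}.

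To establish the identification with the image of $\mathbf{U}'$ (and hence $\mathbf{U}$), I would recall that, as explained in Section~\ref{section:submodule}, we have the factorization $\rho_{\mathrm{mxd}} = \pi\circ\rho_{\mathrm{ord}}$, where $\pi:S_q(n,r+(n-1)s)\to S_q(n;r,s)$ is the restriction-of-action map. By the classical quantized Schur--Weyl theorem quoted from \cite{dipperjamesschur,green}, the homomorphism $\rho_{\mathrm{ord}}:\mathbf{U}'\to S_q(n,r+(n-1)s)$ is surjective (using that the subalgebra $\mathbf{U}'$ hits all the $q$-Schur algebra generators, which is precisely the content of the first theorem of Section~\ref{section:submodule}). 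The preceding corollary gives a right inverse $\phi^*$ for $\iota^*=\pi$, so $\pi$ is surjective as well. Chaining these two surjections yields
\[
\rho_{\mathrm{mxd}}(\mathbf{U}') \;=\; \pi\bigl(\rho_{\mathrm{ord}}(\mathbf{U}')\bigr) \;=\; \pi\bigl(S_q(n,r+(n-1)s)\bigr) \;=\; S_q(n;r,s),
\]
and since $\mathbf{U}'\subseteq \mathbf{U}$ we automatically get $\rho_{\mathrm{mxd}}(\mathbf{U}')\subseteq \rho_{\mathrm{mxd}}(\mathbf{U})\subseteq S_q(n;r,s)$, so all three coincide.

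For the freeness statement, I would invoke the Rational Straightening Algorithm just proved, which gives $A_q(n;r,s)$ as a free $R$-module with basis the bideterminants of standard rational bitableaux. The index set is finite (bounded, for instance, by the number of standard tableaux of shape in $\bigcup_{k}\Lambda^+(r-k)\times\Lambda^+(s-k)$ with $\rho_1+\sigma_1\le n$), so $A_q(n;r,s)$ is finitely generated and free. By Lemma~\ref{lem:construction} applied with the defining linear equations of $S_q(n;r,s)$ inside $\mathrm{End}_R(V^{\otimes r}\otimes {V^*}^{\otimes s})$, we have $S_q(n;r,s)\cong A_q(n;r,s)^*$. The $R$-dual of a finitely generated free module is free of the same rank with the dual basis, and this yields the required basis of $S_q(n;r,s)$ indexed by standard rational bitableaux.

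There is no serious obstacle remaining: the genuinely delicate steps were the construction of the coalgebra $A_q(n;r,s)$ from the centralizer equations (Lemma~\ref{lem:construction}), the construction and analysis of $\iota$ together with $\iota^*=\pi$ (Section~\ref{sec:iota}), and the straightening algorithm producing the standard rational bideterminant basis together with the explicit section $\phi$. Once these are in place, the final theorem is a two-line composition argument together with a dualization to transfer freeness from $A_q(n;r,s)$ to $S_q(n;r,s)$.
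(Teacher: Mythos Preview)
Your proposal is correct and follows exactly the paper's approach: the paper's own proof is two lines, invoking the surjectivity of $\pi$ (obtained from the preceding corollary via the section $\phi$) together with the factorization $\rho_{\mathrm{mxd}}=\pi\circ\rho_{\mathrm{ord}}$ and classical Schur--Weyl duality for the first assertion, and dualizing the standard rational bideterminant basis of $A_q(n;r,s)$ for the second. You have simply spelled out these steps in more detail.
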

\begin{proof}
  The first assertion follows from the surjectivity of $\pi$, the
  second assertion is obtained by dualizing the basis of $A_q(n;r,s)$. 
\end{proof}

%%% Local Variables: 
%%% mode: latex
%%% TeX-master: "mxdtensor2"
%%% End: 

\end{document}